\definecolor{Green}{rgb}{0.0,0.45,0.0}
\def\NewTheorem#1#2{%
	\newaliascnt{#1}{thmm}
	\newtheorem{#1}[#1]{#2}
	\aliascntresetthe{#1}
	\expandafter\def\csname #1autorefname\endcsname{#2}
}
\numberwithin{equation}{section}
\theoremstyle{definition}
\theoremstyle{remark}
\theoremstyle{definition}
\renewcommand{\P}{ \mathbb P }
\newcommand{\EXP}{\mathbb{E}}
\newcommand{\IND}[1] {{ \mathbbm{1}_{ #1 }} }
\newcommand{\1}{\mathbbm{1}}
\newcommand{\R}{\mathbb{R}}
\newcommand{\F}{\mathcal{F}}
\newcommand{\T}{\mathbb{T}}
\newcommand{\lwrX}{\underline{X}}
\newcommand{\uprX}{\overline{X}}
\newcommand{\uprXu}{\overline{X}_{u_0}}
\newcommand{\sX}{\mathscr{X}}
\newcommand{\tsX}{\tilde\sX}
\newcommand{\bY}{\mathbf{Y}}
\def\tS{\tilde{S}}
\def\oo{\overline{\omega}}
\def\ooone{\overline{\omega}^{(1)}}
\def\ootwo{\overline{\omega}^{(2)}}
\def\cE{E}
\def\tcE{\tilde{E}}
\def\ww{{\mathsf{w}}}
\def\uu{{\mathsf{u}}}
\def\w{{{v}}}
\begin{document}
\date{}
\title{\Large Transformation of Stochastic Recursions and Critical Phenomena in the Analysis 
of the Aldous-Shields-Athreya Cascade and Related Mean Flow Equations}
\author{Radu Dascaliuc\thanks{Department of Mathematics,  Oregon State University, 
Corvallis, OR, 97331. {dascalir@math.oregonstate.edu}}
\and Tuan N.\ Pham\thanks{Faculty of Math and Computing,  Brigham Young University-Hawaii,  
Laie, HI 96762. 
{tpham@byuh.edu}}
\and Enrique Thomann\thanks{Department of Mathematics,  Oregon State University, 
Corvallis, OR, 97331.
{thomann@math.oregonstate.edu}}
\and
Edward C.\ Waymire\thanks{Department of Mathematics,  Oregon State University, Corvallis, OR, 97331. 
{waymire@math.oregonstate.edu}}
}
\maketitle

\begin{abstract} 
The paper has two main goals. First, we extend the contemporary probability theory on trees to investigate critical phenomena in a stochastic model of Yule type called Aldous-Shields-Athreya (ASA) cascade. Second, we apply the newly developed probabilistic framework to problems of uniqueness and nonuniqueness of solutions to the linear and nonlinear mean flow equations, referred to as the pantograph equation and $\alpha$-Riccati equation, respectively. The stochastic processes associated with these equations are related to each other via a one-parameter family of transformations. Remarkably, these simple transformations lead to infinitely many solutions to the initial-value problem of the nonlinear mean flow equation. Despite being non-explicit at the level of mean flow, their effect on the mean flow equations is reminiscent of how the Cole-Hopf transformation maps solutions of the heat equation to those of the Burgers equation. While the ASA cascade has been used to model percolation, ageing, and data compression, its relevance to any specific physical molecular dynamics is unclear to the authors. Nevertheless, our results highlight how simple stochastic-level transformations can uncover significant macroscopic structures. This principle is exemplified by the connection between spontaneous magnetization and shocks in the Burgers equation (Newman 1986) or the connection between the branching Brownian motion and the KPP equation (McKean 1975). In our model, the breakdown of uniqueness in mean flow solutions corresponds to critical phenomena in the ASA cascade such as stochastic explosion, hyperexplosion, and percolation.

\end{abstract}

\tableofcontents

\section{Introduction and Preliminaries}
\label{introsec}
The classical Yule process\footnote{sometimes also referred to as Yule-Furry process} is a simple and 
well-understood pure birth process in which each individual of the population gives birth to an offspring at a constant rate 
$\lambda$, independently of the rest of the population \cites{Yule25, Furry37, Kendall49}. Equivalently, it can be defined as a Galton-Watson branching 
process with a single progenitor with offspring distribution $p_2=1$ and with constant exponential rate $\lambda$ 
\cites{harris1963, LyonsPeres}. As such, the Yule process is constructed on a tree-indexed family of waiting times 
$\{\lambda^{-1}T_v\}_{v\in\T}$, to be referred to as the \emph{classical Yule cascade} \cite{part1_2021}. Here, 
$\T=\{\theta\}\cup(\cup_{n=1}^\infty\{1,2\}^n)$ is a full binary tree and $\{T_v\}_{v\in\T}$ is a family of mean-one exponential 
random variables. Often, problems on classical Yule cascades can be reduced to the case $\lambda=1$ due to a time scaling. There are two natural stochastic processes\footnote{Here and throughout the paper, by stochastic process we mean a {\em progressively measurable} continuous time stochastic process defined on a suitable probability space.} that can be associated with time-evolution of the Yule cascade: the additive or ``counting" 
processes $\sX(t)$, and the multiplicative or ``inherited" process $X(t)$ defined recursively by
 \begin{equation}\label{alphsumrecursion1}
\sX(t) = \begin{cases} w_0 \ & \ \text{if} \ T\ge t\\
                           \sX^{(1)}(t-T)\,+\sX^{(2)}(t-T) \ &\  \text{if}\ T< t
                             \end{cases}
 \end{equation}
and
 \begin{equation}\label{alphrecursion2}
X(t) = \begin{cases} u_0 \ & \ \text{if} \ T\ge t\\
                           X^{(1)}(t-T)\,X^{(2)}(t-T) \ &\  \text{if}\ T< t.
                             \end{cases}
 \end{equation}
Here, $T$ is a mean-one exponentially distributed random variable; $\sX^{(1)}$ and $\sX^{(2)}$ are independent and have the same distribution as $\sX$; $X^{(1)}$ and $X^{(2)}$ are 
independent and have the same distribution as $X$; and $u_0,w_0\in\mathbb{R}$. Special cases of these processes are $\sX(t)=w_0N(t)$ and 
$X(t)=u_0^{N(t)}$, where $N(t)$ denotes the population size at time $t$. Notably, the expected values $w(t)=\EXP(\sX(t))$ 
and $u(t)=\EXP (X(t))$ satisfy respectively the equations of exponential and logistic growth:
\begin{equation}\label{Yuleeq}w'=w,\ \ w(0)=w_0\end{equation}
and
\begin{equation}\label{YulRicEq}u'=-u+u^2,\ \ u(0)=u_0.\end{equation}
One may think of \eqref{Yuleeq} (or \eqref{YulRicEq}) as a macroscopic/averaged description while \eqref{alphsumrecursion1} (or \eqref {alphrecursion2}, respectively) as a microscopic/stochastic description of some hypothetical physical phenomenon. An example for this way of thinking is the relationship between the KPP equation and the branching Brownian process \cite{mckean1975application}. The relationship between shocks in Burgers equation and statistical phenomena 
of spontaneous magnetization criticality discovered by C.M.\ Newman (1986) is another example; see \cites{newmanburgers, choquard2004, dascaliuc2019complex}. Similar in spirit is the analysis of critical phenomena associated with large scale physical quantities using a diagrammatic statistical representation; see \cite[Remarks 1.2]{aizenman1986}. In the ASA cascade, the statistical renormalization of the tree-indexed hyperexplosion times $\{L_v\}_{v\in\T}$ (\autoref{mhs}) induces a scaling exponent $\gamma$ (Equation \ref{gma}) which is exactly the algebraic convergence rate in a class of solutions to the mean flow.

Generalization of the classical Yule cascade that allows for non-constant rates leads to the notion of \emph{nonhomogeneous Yule cascade}, 
which is a tree-indexed family $\{\lambda_v^{-1}T_v\}_{v\in\T}$ where each $\lambda_v>0$ is deterministic \cite{part1_2021}. 
An important class of nonhomogeneous Yule cascades is the case $\lambda_v=\alpha^{-|v|}$ where $\alpha>0$ is a constant 
and $|v|$ denotes the genealogical height of $v$ on the tree. This type of cascade underlies the branching random walk 
model considered by Athreya \cite{athreya} as well as the random tree model considered Aldous and Shields \cite{Aldous1998Diffusion}. We will refer to the cascade  
$\{\alpha^{-|v|}T_v\}_{v\in\T}$ as the \emph{Aldous-Shields-Athreya (ASA) cascade} with parameter $\alpha$. Remarkably, the ASA cascade, 
especially with $\alpha>1$, can also be viewed as an idealization of the self-similar stochastic cascade associated with the 3D incompressible Navier-Stokes 
equations \cite{alphariccati}. In addition,
the ASA cascade is the only 
self-similar cascade in the sense of \cite{part2_2021} in the class of nonhomogeneous Yule cascades. The cascade is known to be 
non-explosive for $\alpha\le 1$ and hyper-explosive for $\alpha>1$ \cites{athreya,alphariccati}, see also \autoref{alphexplosion}. 

The ASA cascade is found in various applied models. For instance, in the percolation model \cite{Aldous1998Diffusion}, the event of explosion corresponds to the occurrence of a cluster of infinitely 
many wet sites connected to the root in finite time. In the biological model \cite{BestPfaffelhuber} (see also references therein),
ageing is measured by fitting parameters to empirical curves exhibiting a {\em nonexplosive} decrease in the frequency 
of  proliferating cells \cite{BestPfaffelhuber}*{Remark 3.3}.  
In \cite{alphariccati}, the {\em hyperexplosion} phenomenon was shown to occur a.s.\ for $\alpha>1$. By the technique of \emph{stochastic Picard iteration from different ground states}, the hyperexplosion leads to the lack of uniqueness of the solutions to the associated mean flow equation. In this paper, we introduce another technique, namely the \emph{transformations of stochastic recursions}, to exploit the hyperexplosion to generate multiple solutions.

As to be demonstrated in 
\autoref{ssprobintrosec}, the additive and multiplicative stochastic processes $\sX(t)$ 
and $X(t)$ associated with an ASA cascade are modified 
from \eqref{alphsumrecursion1} and \eqref{alphrecursion2} as follows:
\begin{equation}\label{alphsumrecursion2}
\sX(t) = \begin{cases} w_0 \ & \ \text{if} \ T\ge t\\
                           \sX^{(1)}(\alpha(t-T)\,+\sX^{(2)}(\alpha (t-T)) \ &\  \text{if}\ T< t
                             \end{cases}
 \end{equation}
and
 \begin{equation}\label{alphrecursion1}
X(t) = \begin{cases} u_0 \ & \ \text{if} \ T\ge t\\
                           X^{(1)}(\alpha(t-T)\,X^{(2)}(\alpha (t-T)) \ &\  \text{if}\ T< t.
                             \end{cases}
 \end{equation}
The expected value $w(t)=\EXP (\sX(t))$ satisfies the initial-value problem of the \emph{pantograph equation}
\begin{equation}
\label{alphap}
w'(t)=-w(t)+2w(\alpha t),\ \ w(0)=w_0
\end{equation}
while $u(t)=\EXP (X(t))$ satisfies the initial-value problem of the \emph{$\alpha$-Riccati equation}\footnote{The term {\it $\alpha$-Riccati} 
was introduced by the authors in deference to the standard Riccati equation ($\alpha=1$). The corresponding 
stochastic model is sometimes also referred to as the {\it Aldous-Shields model} \cites{BestPfaffelhuber,dean2005phase}, 
{\it generalized Eden growth} \cite{dean2005phase}, {\em discounted branching random walk} \cite{athreya}, or an 
{\em Aldous-Shields-Athreya} as here.} 
\begin{equation}
\label{diffalpha-ricc}
u'(t)=-u(t)+u^2(\alpha t),\ \ u(0)=u_0.
\end{equation}
We refer to $\sX$ and $X$ as {\em solution processes} for the pantograph and $\alpha$-Riccati equations, respectively.

One can notice that \eqref{alphap} is a linearization of \eqref{diffalpha-ricc} 
around the steady state $u=1$. For $u_0=1$, \eqref{alphrecursion1} has a symmetry: if  $X$ satisfies 
\eqref{alphrecursion1}, then so does $X^{\lambda}$ for any constant $\lambda$. This does not translate to any obvious symmetry at the level of expectation \eqref{diffalpha-ricc}. For example, if $u$ is a solution to the $\alpha$-Riccati equation then $u^\lambda$ is in general no longer a solution to the same equation. On the other hand, if $u_0=e^{\lambda w_0}$, a solution to \eqref{alphrecursion1} can be obtained from a solution to \eqref{alphsumrecursion2} by $X_\lambda=e^{\lambda\sX}$. The transformation
leads to a key observation: for $w_0=0$, a single nontrivial solution $\sX$ to \eqref{alphsumrecursion2} induces infinitely many solutions $X_\lambda$ to \eqref{alphrecursion1} (a ``one-to-many'' principle). One of the main contributions of the paper is to successfully exploit this stochastic-level transformation to obtain infinitely many solutions to 
the problem \eqref{diffalpha-ricc} for $\alpha>1$ and for a range of initial data $u_0$. The transformation $\sX\to e^{\lambda \sX}$ does not translate to any obvious transformation between solutions of \eqref{alphap} and those of \eqref{diffalpha-ricc}. Our results highlight the principle that simple transformations at a microscopic level can lead to significant, yet hidden, results at the macroscopic level. Although $\lambda$ can be complex-valued, we will only consider $\lambda\in\mathbb{R}$ to focus on real-valued solutions. 

The well-posedness of \eqref{alphap} and \eqref{diffalpha-ricc} are nontrivial due to their nonlocal nature, especially in the case $\alpha>1$. 
The seminal paper by Kato and McCleod \cite{kato1971functional} provides an extensive analytical treatment of the general pantograph equation:
\begin{equation}\label{genpantode}
w^\prime(t) = bw(t) + aw(\alpha t), \quad w(0) = w_0
\end{equation}  
with $\alpha > 0$ and $a,b\in\mathbb{R}$, $a\ne 0$. They showed the nonuniqueness of solutions in the case $\alpha>1$.
For the sake of clarity, unless otherwise stated, by solutions, we mean {\em global solutions} 
to the initial value problems \eqref{diffalpha-ricc} or  \eqref{genpantode}, i.e. solutions that exist 
for all $t>0$ and satisfy the initial condition as $t\to0^+$. Note that such solutions are necessarily 
$C^{\infty}$ on $t>0$. In the time-delay case $\alpha\in(0,1]$, the solutions are in fact analytic. For problem \eqref{diffalpha-ricc}, nonuniqueness of solutions and finite-time blowup (for sufficiently large initial condition) are shown in the case $\alpha>1$ via probabilistic techniques \cite{athreya,alphariccati,dascaliuc2019complex}. In  
\cite{alphariccati}, the authors introduced the stochastic process \eqref{alphrecursion1} and provided much of the probabilistic framework adopted in the present paper.


By rescaling the variable, one can write \eqref{alphap} in a more general form
\begin{equation}\label{pantode}
w^\prime(t) = - w(t) + aw(\alpha t), \quad w(0) = w_0.
\end{equation}  
While the case $a=2$ is of primary interest, our methodology applies to the general case $a>0$ as well. Equation \eqref{pantode} admits a stochastic structure similar to \eqref{alphsumrecursion2}. The linearity of \eqref{pantode} allows for an alternative probabilistic interpretation\textemdash as the mean flow equation of a stochastic cascade on a \emph{unary} tree, thus placing the framework within the classical theory of jump Markov processes and associated Kolmogorov backward 
equations (\autoref{prob_setup_pant}). 

The pantograph equation \eqref{genpantode} enjoys diverse applications in statistical physics, 
applied mathematics, analysis, number theory, graph theory and combinatorics, e.g., see \cite{shapira2021quasirandom} 
and references therein. Our interest in the $\alpha$-Riccati equation is mostly due to a mean-field heuristics that incorporates Le Jan and Sznitman's probabilistic framework \cite{lejan} for the Navier-Stokes equations with the natural scaling and rotational symmetry of the equations \cite[p.55]{alphariccati}. In \cite{lejan}, the solution to the Navier-Stokes equations in the Fourier domain is represented as the mean flow of a branching process in which the waiting time intensities, indexed by a binary tree, have a Markov structure along each path of the tree. The $\alpha$-Riccati equation is essentially obtained by replacing the transition probability kernel $\frac{1}{\pi^3|\xi|^2},\xi\in{\mathbb R}^3\backslash\{0\}$ by a Dirac mass at $\alpha\xi$ and replacing a vector product by a scalar product. These mean field approximations reduce the branching process associated with the Navier-Stokes equations to the process $X$ satisfying the recursion \eqref{alphrecursion1} which is now associated with the $\alpha$-Riccati equation.

Equations \eqref{pantode} and \eqref{diffalpha-ricc} are intended to serve as mathematical 
 surrogates\footnote{in the spirit of  the 
 idealized Ising model in statistical physics, logistic model in population growth, 
 discrete Gaussian free field in quantum field theory, etc.} 
 for aspects of the regularity theory of differential equations amenable to probabilistic 
 methods for analysis. Our main result (\autoref{main}) establishes the lack of 
uniqueness and quantifies the asymptotic behavior of solutions to the  initial value problem \eqref{diffalpha-ricc}. More precisely, \eqref{diffalpha-ricc} has infinitely many solutions distinguishable from one another by their behaviors at infinity:
\begin{equation}\label{u_lambda_ass}
\lim_{t\to\infty}\frac{1-u_\lambda(t)}{t^{-\gamma}}={\lambda}
\end{equation}
where \begin{equation}\label{gma}\gamma=\log_\alpha 2=\frac{\ln 2}{\ln \alpha}.\end{equation}

Monte-Carlo simulations are carried out for both pantograph equation and $\alpha$-Riccati equation to illustrate the result (\autoref{simulation}). The proof of \autoref{main} reveals 
the significant influence of the value of the parameter $\alpha$ on the behavior of stochastic model. Namely, as $\alpha$ passes through the critical values $1$ and $2$, the nonuniqueness of solutions changes as well as on their long-time behavior. While the focus of this paper is on the equations \eqref{pantode} and \eqref{diffalpha-ricc}, the broader purpose 
is to illustrate an emerging theory for a class of nonlinear differential equation with product-type nonlinearity
(which includes the Navier-Stokes equations) based on the contemporary probability theory on trees; see e.g.\ \cite{LyonsPeres}.  Although we introduce fundamental notions of  {\it stochastic explosion}, {\it hyperexplosion}, {\it t-leaf percolation} and methodologies of {\it stochastic Picard iteration\footnote{A familiar, though mostly 
unrelated, application of stochastic Picard iteration is in the well-posedness of stochastic differential equations with Lipschitz 
coefficients.} from ground states} and {\it transformation of stochastic recursions} to the pantograph equation and  $\alpha$-Riccati equation, all of these tools can apply to a larger class of partial differential equations.


The paper is organized as follows. \autoref{ssprobintrosec} introduces the probabilistic framework behind \eqref{pantode} and \eqref{diffalpha-ricc} beginning with a 
new, albeit constrained, approach to (\ref{pantode}) based on self-similarity and Feller's classical theory of Markov processes and 
 semi-groups. This motivates a 
 transition to a less constrained approach via probability on trees. \autoref{Sec3} illustrates the stochastic method for the pantograph equation using the unary cascade. A family of nontrivial solutions to \eqref{pantode} with $w_0=0$ is obtained (\autoref{prop11112}). These solutions have 
a slow algebraic behavior as $t\to\infty$, consistent with 
Kato-McCleod's asymptotic characterization for solutions to \eqref{genpantode} with $\alpha>1$, $a>-b>0$ \cite{kato1971functional}, but are constructed differently. \autoref{PicardIteration} describes the stochastic Picard iteration\textemdash a key technique
to construct solution processes to \eqref{alphsumrecursion2} and \eqref{alphrecursion1}. \autoref{Sec5} defines and characterizes several critical phenomena related to the ASA stochastic cascade: explosion, hyperexplosion, and $t$-leaf percolation. \autoref{Sec6} is mainly dedicated to the proof \autoref{main} which will employ very 
 different approaches in the ``subcritical" regime $\alpha<2$ and ``supercritical" regime $\alpha>2$. \autoref{simulation} describes Monte Carlo algorithms for the pantograph equation and $\alpha$-Riccati equation to numerically illustrate the analytical results in \autoref{Sec6}. \autoref{appendices} consists of two appendices. Appendix \ref{appendix1} is a classification of asymptotic behavior 
of solutions to \eqref{diffalpha-ricc} that converge to 1 at infinity, a result used in the proof of \autoref{expl_asymptotics}. 
Appendix \ref{groundstatesec} is a classification of solutions obtained by stochastic Picard iterations with constant ground states, which provides a context for the maximal solution process used in \autoref{generalinitialdata}.


\section{Self-Similarity, Solution Processes, and Probability on Trees}
\label{ssprobintrosec}

To formally define the probabilistic structures described in the introduction, we start with a more general 
setting that places the pantograph and the $\alpha$-Riccati equations within the framework of the 
{\em self-similar} Doubly Stochastic Yule Cascades (DSY) introduced in \cite{part2_2021}.  This setting is analogous to the probabilistic framework \cite{lejan} for the Navier-Stokes equations in the Fourier space adapted to self-similar solutions \cite{chaos}. Consider a space-time counterparts of \eqref{pantode} 
 and \eqref{diffalpha-ricc} as follows.
\begin{equation}\label{spacetimepant}
 \frac{\partial \ww}{\partial t}(\ell,t) = -\ell \ww(\ell,t) + a \ell \ww(\alpha\ell,t), \quad \ell>0,  
 t\ge 0, \quad \ww(\ell,0)=\ww_0(\ell)
 \end{equation} 
 and 
 \begin{equation}\label{spacetimealpha}
 \frac{\partial \uu}{\partial t}(\ell,t) = -\ell \uu(\ell,t) + \ell \uu^2(\alpha\ell,t), \quad \ell>0,  
 t\ge 0, \quad \uu(\ell,0)=\uu_0(\ell).
 \end{equation} 
 
The use of the parameter  $\ell\in(0,\infty)$ is 
 intentional, as it is merely a mathematical {\it label} without special physical significance 
 otherwise.  However, it is useful to think of \eqref{spacetimepant} and \eqref{spacetimealpha} 
 as nonlocal differential equations in Fourier space with $\ell=|\xi|$.
 
 The problems \eqref{spacetimepant} and \eqref{spacetimealpha} have a natural symmetry: If $\ww(t,\ell)$ and $\uu(t,\ell)$ are solutions, then 
 $\ww_\lambda(t,\ell)=\ww(\ell/\lambda,\lambda t)$ and $\uu_\lambda(t,\ell)=\uu(\ell/\lambda,\lambda t)$ are also solution to corresponding 
 equations with initial data $\ww_\lambda(0,\ell)=\ww_0(\ell/\lambda)$ and $\uu_\lambda(0,\ell)=\uu_0(\ell/\lambda)$. This allows one to define {\em self-similar} solutions of the form $\ww(\ell,t) = w(\ell t)$ and $\uu(\ell,t) = u(\ell t)$. The \emph{self-similarity variable} is $s=\ell t$.

It is often convenient to express the space-time equations \eqref{spacetimepant} and 
\eqref{spacetimealpha} in an integral form as follows:
 \begin{equation}\label{mildspacetimepant}
 \ww(\ell,t) = e^{-\ell t}\ww_0(\ell) + a \int_0^t\ell e^{-\ell s}\ww(\alpha\ell,t-s)ds, \quad \ell>0,  t\ge 0
 \end{equation} 
 and
 \begin{equation}\label{mildspacetimericcati}
\uu(\ell,t) = e^{-\ell t}\uu_0(\ell)+ \int_0^t\ell e^{-\ell s}\uu^2(\alpha\ell,t-s)ds,
\quad \ell>0,  t\ge 0.
\end{equation} 
It is clear that self-similar solutions to \eqref{mildspacetimepant} and \eqref{mildspacetimericcati} 
satisfy the integral form of \eqref{pantode} and \eqref{diffalpha-ricc}:
\begin{equation}\label{mildpant}
 w(t) = e^{-t}w_0 + a \int_0^te^{-s}w(\alpha(t-s))ds, \quad  t\ge 0
 \end{equation} 
 and
 \begin{equation}\label{mildalpha}
u(t) = e^{-t}u_0+ \int_0^t e^{-s}u^2(\alpha(t-s))ds\,,
\quad  t\ge 0.
\end{equation} 
Here, 
\[w(s)=\ww(\ell,t)|_{\ell=1,t=s}=\ww(\ell,t)|_{\ell=s,t=1},\quad w_0=\ww_0(1)\] 
and 
\[u(s)=\uu(\ell,t)|_{\ell=1,t=s}=\uu(\ell,t)|_{\ell=s,t=1}, \quad u_0=\uu_0(1)\,.\]


\subsection{A Probabilistic Framework for the Pantograph Equation}\label{prob_setup_pant}

The space-time form (\ref{spacetimepant}) of the pantograph equation can be viewed  as a 
Kolmogorov backward equation for a jump Markov process. For illustration purposes, 
let us assume $a=1$ and $\ww_0\equiv 1$ in \eqref{mildspacetimepant} and \eqref{mildpant} for now. On the state space $[0,\infty)$, with $\ell=0$ being an absorbing state, the corresponding Markov process holds in state $\ell> 0$ for an exponential 
time of intensity $\lambda(\ell) = \ell$ before transitioning to state 
$\alpha\ell$, i.e.\ with a transition probability kernel $k(\ell_1,d\ell_2) = \delta_{\{\alpha\ell_1\}}(d\ell_2)$.
The constant solution $\ww(\ell,t)\equiv 1$  is a solution (for $\ww_0\equiv 1$) 
and is unique if and only if there is no explosion. The jump Markov process starting at 
$\ell>0$ is explosive if and only if $\alpha >1$ since the mean time between the 
$n$-th and $(n+1)$-st jumps is $\frac{1}{\ell\alpha^{n}}$ for $n\ge0$. In fact, for $\alpha>1$, the {\em explosion time}
 \[\tS_\ell=\sum_{j=0}^\infty \frac{T_j}{\ell\alpha^j}\] 
 is finite a.s. Here, $\{T_j\}_{j\ge 0}$ is an i.i.d.\ sequence of mean-one
 exponential random variables. 
 The minimal jump process may be continued beyond explosion 
 time by absorbing the process in an adjoined spatial point at infinity.  It is standard 
 Markov process theory \cites{breiman1992probability,bhattacharya2023} that this does not alter the infinitesimal behavior 
 of (\ref{spacetimepant}). Let $p(1; \ell_1,d\ell_2)$ be the 
 substochastic transition probabilities  of the minimal process. Then $w(s)=p(1;s,[0,\infty))$ satisfies \eqref{mildpant} with $a=1$ and $w_0\equiv 1$.
This solution  is
 called {\em minimal solution} to \eqref{mildpant}, which differs from the 
 steady state $w\equiv 1$ in the case $\alpha>1$. Denote by $\tilde{N}_\ell(t)$ the number of clock rings by time $t$ with initial state $\ell$. By the convolution formula for non-identical 
 exponential distributions \cite[p.\ 40]{Feller2},
  \begin{eqnarray}\label{distinctconv}
 \P\left(\tilde{N}_\ell(t) = n\right) &=&
 \P\left(\sum_{j=0}^{n}
 \frac{1}{\ell\alpha^j}T_j>t\right)-
 \P\left(\sum_{j=0}^{n-1}
 \frac{1}{\ell\alpha^j}T_j>t\right)\nonumber\\
&=& \sum_{j=0}^n
\frac{e^{-\alpha^j\ell t}}{\prod_{k\in I_{n,j}}
(1-\alpha^{j-k})} -  \sum_{j=0}^{n-1}
\frac{e^{-\alpha^j\ell t}}{
\prod_{k\in I_{n-1,j}}(1-\alpha^{j-k})}\nonumber\\
&=& \sum_{j=0}^n
\frac{e^{-\alpha^j\ell t}\alpha^{j-n}}{\prod_{k\in I_{n,j}}(1-\alpha^{j-k})}
 \end{eqnarray}
where $I_{n,j}=\{0,1,...,n\}\backslash\{j\}$. Hence, the minimal solution has 
 an explicit series representation:
 \begin{equation}\label{pantographformula}
 w(s)= \P_{\ell=s}(\tS_s>1) = \sum_{n=0}^\infty \P(\tilde{N}_{s}(1)=n) =\sum_{n=0}^\infty \sum_{j=0}^n
 \frac{e^{-\alpha^j s}\alpha^{j-n}}{\prod_{k\in I_{n,j}}(1-\alpha^{j-k})}.
\end{equation}
In the case where one can exchange the order of summation, (\ref{pantographformula}) is equivalent to the series representation obtained in \cite{kato1971functional} by an analytic method. 

\begin{remark} In the case of nonexplosion ($\alpha <1$), the uniqueness of solutions 
to the Kolmogorov backward equation implies that the series (\ref{pantographformula}) is identically one 
for all $t\ge 0$. In the case $\alpha=1$, the minimal solution $w(t)$ is also identically one because $\tilde{N}_s(1)$ 
has a Poisson distribution.  However, if $\alpha>1$ then $w(t)<1$ for all $t>0$. 
\end{remark}

For the general case $0<a \le 1$, one can adapt the above approach by considering a jump Markov process on the compactified 
half-line $[0,\infty]$ with $\lambda(\ell) = \ell\,\IND{\ell<\infty}$ (with the convention that $\infty\cdot 0=0$) and transition probabilities $k(\ell_1,d\ell_2)= a\delta_{\{\alpha \ell_1\}}(d\ell_2)
+(1-a)\delta_{\{\infty\}}(d\ell_2)$ for $\ell_1\in[0,\infty]$ (thus, $\infty$ is absorbing). By the same arguments 
as above with initial data $\ww(\ell,0)=1$ for $\ell\in(0,\infty)$, $\ww(\infty,0)=0$, one obtains a solution to \eqref{mildpant} that is vanishing at infinity. 
Note that the constant $w\equiv 1$ is no longer a solution to the pantograph equation 
unless $a=1$. For $0<a<1$, there are only finitely many finite jumps before the Markov process is absorbed at infinity, regardless of the value of $\alpha>0$. Therefore, stochastic explosion has probability 
 zero and the unique solution to \eqref{pantode} with $w_0=1$ is given by
 \begin{equation}\label{pant_sol_small_a}
 w(s) = p(1;s, [0,\infty)) = \sum_{n=0}^\infty a^n\sum_{j=0}^n
 \frac{e^{-\alpha^js}\alpha^{j-n}}{\prod_{k\in I_{n,j}}(1-\alpha^{j-k})}. 
 \end{equation}
For $w_0=0$, the minimal solution is identically zero because $\lambda(0)=\lambda(\infty)=0$.

The condition $0<a\le 1$ is clearly an obstruction to the construction of solutions to \eqref{pantode} mentioned above. However, in view of the Hille-Yosida theorem,  some
restrictions are intrinsic to Feller's semigroup theory underlying this approach. 
A closer look at the underlying stochastic structure reveals a natural
 rooted {\it unary tree} consisting of the waiting times $\left\{\frac{T_j}{\alpha^j\ell}:\ j= 0,1,2,...\right\}$. For $\ell=1$, the self-similarity variable is $\ell t=t$. See \autoref{unary-tree} for a realization of the unary tree.
\begin{figure}[htp]
\centering
\includegraphics[scale=1]{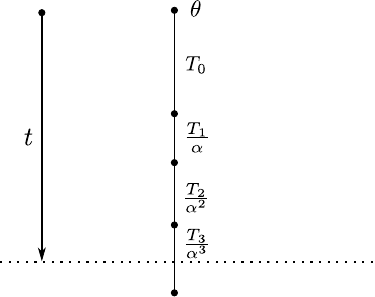}
\caption{Pantograph unary tree}
\label{unary-tree}
\end{figure}
Interestingly, this structure provides an approach to define a 
{\it stochastic recursion} underlying \eqref{pantode} without requiring $0<a\le 1$.  
To be more specific, let us define a {\it (unary) stochastic solution process} for (\ref{pantode}) 
as a progressively measured stochastic process $\tsX$ satisfying the recursion
 \begin{equation}\label{pantrecursion}
\tsX(t) = \begin{cases} w_0 \ & \ \text{if} \ T_0\ge t\\
                             a\tsX^{(1)}(\alpha(t-T_0)) \ &\  \text{if}\ T_0<t
                             \end{cases}
 \end{equation}
where $T_0$ is a mean-one exponentially distributed random variable; $\tsX^{(1)}$ has the same distribution as $\tsX$ and is independent of $T_0$. By conditioning on $T_0$, it is simple to check that $w(t) = \EXP(\tsX(t)\IND{[\tilde{S}>t]})$ 
 satisfies \eqref{mildpant} and thus is a self-similar solution to (\ref{spacetimepant}), 
 provided that the expectation exists. Here, 
 $\tilde{S}=\tilde{S}_{\ell=1}$ is the explosion time from the initial state $\ell=1$:
 \begin{equation}\label{tilde_S}
 \tilde{S}=\sum\limits_{j=0}^{\infty}\frac{T_j}{\alpha^j}.
 \end{equation}
 Iterating this recursion on the nonexplosion event $[\tilde{S}>t]$,  one obtains
 \begin{equation}\label{treepant}
 w(t) = {\EXP}(w_0\, a^{\tilde{N}(t)}\IND{[\tilde{S}>t]} ) = w_0
 \sum_{n=0}^\infty a^n\sum_{j=0}^n
 \frac{e^{-\alpha^jt}\alpha^{j-n}}{\prod_{k\in I_{n,j}}(1-\alpha^{j-k})},\quad t >0,
 \end{equation}
where $\tilde{N}(t)=\tilde{N}_{\ell=1}(t)$ is the number of clock rings by time $t$.  
 If $\alpha>\max\{|a|,1\}$, the series can be rearranged into
 \begin{equation}
 \label{KatoMcLeod}
 w(t) = w_0\, c_{a,\alpha} \sum_{n=0}^\infty \frac{a^n e^{-\alpha^n t}}{\prod_{j=1}^n (1-\alpha^j)}  
 \end{equation}
 where 
 \begin{equation}\label{c-const}
 c_{a,\alpha} = \sum_{n=0}^\infty \frac{a^n}{\alpha^n\prod_{j=1}^n (1-\alpha^{-j})}
 \end{equation}
with the convention that $\prod_{j=1}^0 = 1$.
 The formula \eqref{KatoMcLeod} is also obtained in 
 \cite{kato1971functional} by an analytic method.
 
For $w_0=a=1$, the solution reduces to the complementary distribution function $G(t) = 
 \P(\tilde{S}>t)$. Hence,
 \begin{equation}
 \label{complementary}
 G(t) = C_\alpha \sum_{n=0}^\infty \frac{e^{-\alpha^n t}}{ \prod_{j=1}^n (1-\alpha^j)},
 ~~~
 C_\alpha=\sum_{n=0}^\infty \frac{1}{\alpha^n \prod_{j=1}^n (1-\alpha^{-j})}.
 \end{equation}

\begin{remark}\label{acasermk2}
 For $0<a<1$, the stochastic recursion \eqref{pantrecursion} can be
 ``thinned'' as follows:
 \begin{equation}\label{hatX}
\hat{\mathscr{X}}(t) = \begin{cases} w_0 \ & \ \text{if} \ T_0> t\\
                             0 \ &\ \text{if}\ C=0, T_0<t\\
                            \hat{\mathscr{X}}^{(1)}(\alpha(t-T_0)) \ &\  \text{if}\ C =1, T_0<t,
                             \end{cases}
 \end{equation}
 where $C\in\{0,1\}$ is a Bernoulli random
 variable independent of the holding time $T_0$ with $\mathbb{P}(C=1) =a$. The solution $w(t)=\EXP(\hat{\mathscr{X}}(t))$ has the same series representation as \eqref{treepant}. The term $a^n$ now represents the probability of 
 $n$ clock rings prior to the absorption at $0$.  Note that the stochastic recursion \eqref{hatX} does not require the compactification of the half-line.
 \end{remark}
 
   
 \subsection{Probabilistic Framework for \texorpdfstring{$\alpha$}{}-Riccati 
 Equations.}\label{binary_setup}
Let us start with some basic notations on trees. A tree rooted at  $\theta$
is a graph without loops and designated vertex $\theta$ as root. We identify
such a tree with its set of vertices. Let $\T=\{\theta\}\bigcup\left(\cup_{n=0}^\infty\{1,2\}^n\right)$ be a binary tree rooted at $\theta$. 
For a vertex $v=(v_1,\dots,v_n)\in\T$, denote $|v| = n, v|j = (v_1,\dots,v_j),
1\le j\le n$, and $v|0=\theta$. Also, denote by $\overleftarrow{v}=v|(|v|-1)$ the parent of a vertex 
$v\in \T\setminus\{\theta\}$. We denote $\partial \T=\{1,2\}^{\mathbb{N}}$ and call 
each $s\in\partial\T$ {a ray} of the tree $\T$. A vertex $v\in\T$ is said to belong to a ray 
$s\in\partial\T$ (written as $v\in s$) if $v=s\big||v|$, i.e.\ if $s=(v_1,\dots,v_n,\dots)$. Consequently, the root $\theta\in s$ 
for any ray $s\in\partial\T$.

 A {\it stochastic solution process} for the integral form \eqref{mildalpha} of the $\alpha$-Riccati equation is defined by the recursion 

 \begin{equation}\label{alphrecursion}
X(t) = \begin{cases} u_0 \ & \ \text{if} \ T_\theta\ge t\\
                           X^{(1)}(\alpha(t-T_\theta))\,X^{(2)}(\alpha (t-T_\theta)) \ &\  \text{if}\ T_\theta< t,
                             \end{cases}
 \end{equation}
 where $T_\theta$ is a mean-one exponential time, $X^{(1)},X^{(2)}$ are independent copies of $X$ that are also independent on $T_\theta$. Here, \eqref{alphrecursion} is understood to be satisfied a.s.\ for any $t>0$ and the processes 
 $X$, $X^{(1)}$, and $X^{(2)}$ are understood to be progressively measurable. By recursively applying the definition \eqref{alphrecursion} to each process $X^{(1)}$ and $X^{(2)}$, the exponential times naturally form a tree-indexed family
$\bY = \{Y_v=\alpha^{-|v|}T_v:v\in\T\}$,
where $\{T_v\}_{v\in\T}$ is an i.i.d.\ family of mean-one exponentially distributed random variables defined on
a probability space $(\Omega,{\mathcal F},\P)$. $\bY$ is called an \emph{ASA cascade} associated with \eqref{mildalpha}. See \autoref{tree-edge2} for a realization of an ASA cascade. In this realization,
\begin{eqnarray}\nonumber X(t)&=&X^{(1)}(\alpha(t-T_\theta))X^{(2)}(\alpha (t-T_\theta))\\
\nonumber &=&X^{(11)}(\alpha^2(t-T_\theta-\alpha^{-1}T_1))X^{(12)}(\alpha^2(t-T_\theta-\alpha^{-1}T_1))\times\\
\nonumber &&X^{(21)}(\alpha^2(t-T_\theta-\alpha^{-1}T_2))X^{(22)}(\alpha^2(t-T_\theta-\alpha^{-1}T_2))\\
\label{recurex} &=&\ldots\end{eqnarray}
\begin{figure}[htp]
\centering
\includegraphics[scale=1]{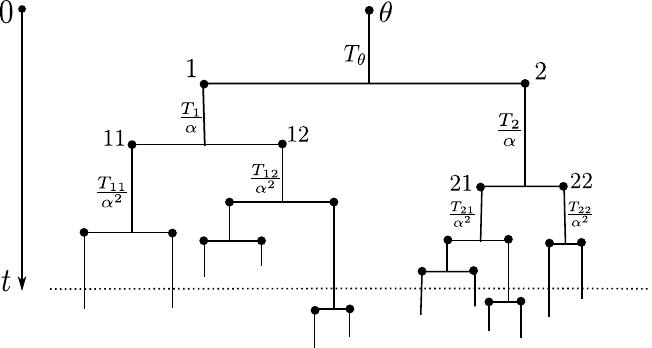}
\caption{Aldous-Shields-Athreya (ASA) cascade}
\label{tree-edge2}
\end{figure}

A {\em (binary) stochastic solution process} for
 the pantograph equation \eqref{mildpant} is defined recursively by
 \begin{equation}\label{pantbinary}
\sX(t) = \begin{cases} w_0 \ & \ \text{if} \ T_\theta\ge t\\
                             \frac{a}{2}\sX^{(1)}(\alpha(t-T_\theta))+
                             \frac{a}{2}\sX^{(2)}(\alpha(t-T_\theta)) \ &\  \text{if}\ T_\theta<t,
                             \end{cases}
 \end{equation}
where $\sX^{(1)},\sX^{(2)}$  are copies of $\sX$ that are independent of $T_\theta$ and of each other\footnote{Strictly speaking, the independence of $\sX^{(1)}$ and $\sX^{(2)}$ is not necessary, however, we will require it for the purpose of transformations of stochastic processes, see \autoref{stoch_transf}.}. 
 
If the corresponding expectation exists, $u(t)=\EXP(X(t))$ and $w(t)=\EXP(\sX(t))$ each is a solution to \eqref{mildalpha} and \eqref{mildpant}, respectively. 
For this reason, the equations \eqref{mildalpha} and \eqref{mildpant}
will be referred to as the {\it mean flow equations} for solution processes  
$X(t)$ and $\sX(t)$, respectively.


Regarding the {\em time evolution} of the ASA cascade $\bY$, the recursion \eqref{alphrecursion} or \eqref{pantbinary} terminates at a vertex $v\in\T$ if and only if
 \[
 \sum_{j=0}^{|v|-1}Y_{v|j}<t\le\sum_{j=0}^{|v|}Y_{v|j}\,.
 \]
Such a vertex will be referred to as a \emph{$t$-leaf}. The continuous-parameter Markov process of sets of $t$-leaves is denoted by
\begin{equation}
\label{tleaves}
\partial{V}(t) =\left\{v\in\T: \sum_{j=0}^{|v|-1}Y_{v|j}<t\le\sum_{j=0}^{|v|}Y_{v|j}\right\}.
 \end{equation}
We denote the set of internal vertices by
 \begin{equation}
\label{tancestors}
\stackrel{o}{V}(t) = \left\{u\in\T:\ \sum_{j=0}^{|u|}Y_{u|j}<t\right\},
\end{equation}
and the $t$-subtree associated with $\bY$ by
\begin{equation}\label{ttree}
V(t) = \stackrel{o}{V}(t) \cup \partial{V(t)}.
\end{equation}

 As in the case of the unary tree, let 
 $N(t)=|\hspace{-.05in}\stackrel{o}{V}(t)|$ 
 be the number of clock rings in the binary tree by time $t$.  Whenever $V(t)$ is a finite set, i.e. in the event of no explosion by time $t$, one has 
\begin{equation}\label{leafancestorcounts}
|\partial V(t)| = |\hspace{-.05in}\stackrel{o}{V}(t)|+1,
\quad |V(t)| = 2 |\hspace{-.05in}\stackrel{o}{V}(t)|+1, 
~~
  t\ge 0,
\end{equation}
Note that the root $\theta$ is a $t$-leaf (without 
 ancestors) if $T_\theta >t$.  
Moreover, up to an explosion time 
\begin{equation}\label{shortdef}
 S= \inf_{s\in\partial{\T}}\sum_{j=0}^\infty Y_{s|j}
 \end{equation}
(i.e. when $t<S$), $V(t)$ takes values in a denumerable and partially ordered 
{\it evolutionary space} $\mathcal{E}$ consisting of all nonempty, 
finite, connected, rooted at $\theta$ subtrees of vertices of the binary tree $\T$.
Alternately, the space $\mathcal{E}$ can be defined inductively as consisting of finite trees $A\subset\T$ 
such that either $A=\{\theta\}$ or $A=B\cup\{v1,v2\}$
for some $B\in\mathcal{E}$ and $v\in B$ whose offspring $v1,v2\notin B$. If $S>t$ then $N(t)<\infty$ and 
the recursions  \eqref{alphrecursion} and \eqref{pantbinary} terminate in finitely many steps, resulting in the 
{\em minimal} solution processes
\[
\lwrX(t)=u_0^{|\partial V(t)|}\IND{[S>t]}=u_0^{N(t)+1}\IND{[S>t]},\qquad\mbox{and}\qquad\underline{\sX}(t)=w_0\cdot(N(t)+1)\IND{[S>t]}\;.
\]

\medskip

  Prior to  the explosion time $S$, $\partial{V}(t)$ is a finite set which 
  evolves infinitesimally in time $t$ to $t+dt$ by removal of a $t$-leaf $v\in\partial{V}(t)$ 
  and replacement with its offspring $v1,v2$.  After explosion, i.e.\ $t>S$, the
 $t$-leaf set $\partial V(t)$ remains well-defined but may become infinite. It may also evolve to the empty set at the (possibly 
  finite) hyperexplosion time 
  \begin{equation}\label{longdef}
  L = \sup_{s\in\partial{\T}}\sum_{j=0}^\infty Y_{s|j}.
  \end{equation}
  The event $[L<\infty]$ is referred to as {\it hyperexplosion}.
 
The notations $S$ and $L$ for explosion and hyperexplosion times, respectively,  are used to 
convey `shortest' and `longest' tree path lengths as measured in terms of the 
{\em{total time accumulated}} on a ray $s\in\partial\T$,
\begin{equation}\label{totaltime}
\Theta_s=\sum_{j=0}^\infty Y_{s|j}\,.
\end{equation}   
One has
\begin{equation}\label{rayexpleq}
S =\inf_{s\in\partial\T}\Theta_s,\quad L=\sup_{s\in\partial\T}\Theta_s.
\end{equation} 

\begin{remark}\label{alphacrossingsrmk}
The event $[|\partial{V}(t)|=\infty]$ cannot be ruled out a priori. In fact, according to the theorem below, this event occurs with a positive probability for a range of $\alpha$. 
\end{remark}

\begin{thm}[see \cites{athreya,alphariccati,dascaliuc2023errata}]\label{alphexplosion}  
The ASA cascade is nonexplosive (i.e. $\P(S=\infty)=1$) for $\alpha\le 1$ 
and hyperexplosive (i.e. $\P(L<\infty)=1$) for $\alpha >1$. Moreover,
\begin{enumerate}[(i)]
\item For $\alpha\in[0,1]\cup[2,\infty)$, one has
$\P(|\partial V(t)|<\infty) = 1$ for all $t\ge 0$;
\item For $1<\alpha<2$, one has $\P(|\partial V(t)|=\infty)>0$
for all $t>0$.
\end{enumerate}
\end{thm}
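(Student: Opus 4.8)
The plan is to treat explosion and hyperexplosion separately, and within the explosion question to split along the three regimes $\alpha\le 1$, $1<\alpha<2$, and $\alpha\ge 2$.

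\emph{Nonexplosion for $\alpha\le 1$ and hyperexplosion for $\alpha>1$.} For a single ray $s\in\partial\T$, the total accumulated time is $\Theta_s=\sum_{j=0}^\infty \alpha^{-j}T_{s|j}$, a sum of independent exponentials with $\EXP(\Theta_s)=\sum_j\alpha^{-j}$. When $\alpha\le 1$ this diverges and one checks (Borel--Cantelli, or a second-moment/three-series argument) that $\Theta_s=\infty$ a.s.; since by \eqref{rayexpleq} $S=\inf_s\Theta_s$, I would show $\P(S=\infty)=1$ by noting that the infimum over the uncountable set $\partial\T$ is still $\infty$ because along any ray the finite-height partial sums grow without bound uniformly. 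More carefully, for $\alpha\le 1$ the partial sum $\sum_{j=0}^{n}Y_{v|j}$ over vertices of height $n$ has, by the union bound over the $2^n$ vertices, probability tending to $1$ of all being $\ge t$ for fixed $t$ once $n$ is large, which gives $\P(|\partial V(t)|<\infty)=1$ en route. When $\alpha>1$, $\EXP(\Theta_s)=\frac{\alpha}{\alpha-1}<\infty$ and $\Theta_s$ is a.s. finite with exponential-type tails; the hyperexplosion statement $L=\sup_s\Theta_s<\infty$ a.s. requires a uniform tail bound over all rays. I would establish this by a first-moment bound on $\sup_{|v|=n}\sum_{j=0}^{n}Y_{v|j}$: using $\EXP\exp(\beta\alpha^{-j}T_{v|j})=(1-\beta\alpha^{-j})^{-1}$ for small $\beta$, the product over $j\le n$ converges as $n\to\infty$ uniformly, so $\EXP\exp(\beta\Theta_s)\le M<\infty$ for all $s$; then $\P(\sup_{|v|=n}\sum_{j\le n}Y_{v|j}>A)\le 2^n M e^{-\beta A}$, and choosing $A$ growing linearly in $n$ while $\beta$ is fixed makes this summable, so by Borel--Cantelli $L<\infty$ a.s.

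\emph{Finite $t$-leaf set for $\alpha\ge 2$.} Here I would argue directly with the $t$-subtree $V(t)$, or equivalently bound $N(t)=|\!\stackrel{o}{V}(t)|$. The key point is that a vertex $v$ is internal at time $t$ only if $\sum_{j=0}^{|v|}Y_{v|j}<t$, i.e. $\sum_{j=0}^{|v|}\alpha^{-j}T_{v|j}<t$. Dropping all but the last term, this forces $\alpha^{-|v|}T_v<t$, but that is too weak; instead I would use that for $v$ to be internal, \emph{all} of its ancestors must have fired, and estimate $\EXP|\!\stackrel{o}{V}(t)|=\sum_{n\ge 0}2^n\,\P(\sum_{j=0}^{n}\alpha^{-j}T_j<t)$. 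For $\alpha\ge 2$ one shows $\P(\sum_{j=0}^{n}\alpha^{-j}T_j<t)$ decays faster than $2^{-n}$: indeed $\sum_{j=0}^n\alpha^{-j}T_j\ge \alpha^{-n}\sum_{j=0}^n \alpha^{n-j}T_j\ge \alpha^{-n}\sum_{j=0}^n 2^{n-j}T_j$ when $\alpha\ge 2$, and $\sum_{j=0}^n 2^{n-j}T_j$ has mean $2^{n+1}-1$ and a large-deviation lower tail, giving $\P(\cdot<\alpha^n t)\le C\rho^n$ with $2\rho<1$ for $\alpha\ge 2$ (the critical case $\alpha=2$ needs a sharper polynomial-times-geometric estimate, handled by an explicit computation with the known density of $\sum 2^{-j}T_j$). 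Summing the geometric series gives $\EXP|\!\stackrel{o}{V}(t)|<\infty$, hence $|\partial V(t)|<\infty$ a.s. by \eqref{leafancestorcounts}.

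\emph{Infinite $t$-leaf set for $1<\alpha<2$.} This is the case I expect to be the main obstacle, since it is an \emph{existence} statement and a first-moment bound goes the wrong way. The strategy is to exhibit, with positive probability, an infinite subtree all of whose vertices have fired by time $t$, i.e. to find an infinite ``fully ignited'' subtree inside $\stackrel{o}{V}(t)$. I would set this up as a branching-process comparison: declare a vertex $v$ \emph{good} if $\sum_{j=0}^{|v|}Y_{v|j}<t$; the good vertices do not form an independent structure, but one can thin by conditioning on a fixed budget split — e.g. require the cumulative time along the path to $v$ to stay below a deterministic increasing sequence $t_n\uparrow t$ with $\sum(t_n-t_{n-1})$ telescoping to $t$ — so that the events ``$v$ is kept'' become independent across distinct vertices at the same level given the parent is kept. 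Because $\alpha<2$, the per-vertex survival probability times the branching number $2$ can be made $>1$: the cost of the $n$-th step is roughly $\alpha^{-n}T_v$, which shrinks geometrically, so the required increment $t_n-t_{n-1}$ can also be taken $\sim c\alpha^{-n}$ with $\sum c\alpha^{-n}<t$, and then $\P(\alpha^{-n}T_v<c\alpha^{-n})=1-e^{-c}$ is bounded below uniformly in $n$; choosing $c$ with $2(1-e^{-c})>1$ makes the comparison Galton--Watson process supercritical, hence infinite with positive probability, and its infinite line of descent lies in $\stackrel{o}{V}(t)$, forcing $|\partial V(t)|=\infty$ on that event. The delicate points are (i) arranging the conditioning so that independence across siblings genuinely holds — this is where one must be careful that $T_v$ is used only for the increment at $v$ and the ancestor increments are already ``spent'' — and (ii) checking $\sum_n c\alpha^{-n}<t$ is compatible with $2(1-e^{-c})>1$, which holds for all $t>0$ precisely because $\alpha>1$ makes the geometric sum finite and $\alpha<2$ leaves enough room in the branching ratio. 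Finiteness of $S$ for $1<\alpha<2$ is not needed here; one only needs the positive-probability event, and since $[|\partial V(t)|=\infty]$ is increasing in $t$ it suffices to prove it for small $t$, which is exactly the regime where the budget argument is most comfortable. I would also remark that this recovers, and is consistent with, the dichotomy in \autoref{alphexplosion} as stated, and cite \cites{athreya,alphariccati,dascaliuc2023errata} for the original arguments and the correction.
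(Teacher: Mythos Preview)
The paper does not prove \autoref{alphexplosion}; it is quoted from \cites{athreya,alphariccati,dascaliuc2023errata}. The only overlap is that hyperexplosion for $\alpha>1$ is essentially established in the proof of \autoref{expl_asymptotics}, via the bound $L\stackrel{d}{\le}\sum_{j\ge 0}M_j$ with $M_j=\alpha^{-j}\max_{|v|=j}T_v$ and Borel--Cantelli on the $M_j$ individually. So there is no paper proof to compare against, but your proposal has genuine gaps.

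The main error is in part (i) for $\alpha\ge 2$. You compute $\EXP|\!\stackrel{o}{V}(t)|=\sum_{n\ge 0}2^n\P\bigl(\sum_{j=0}^n\alpha^{-j}T_j<t\bigr)$ and claim the probability decays faster than $2^{-n}$. This is false: for any $\alpha>1$ the partial sums $\Sigma_n=\sum_{j\le n}\alpha^{-j}T_j$ increase to a finite limit $\Theta$, so $\P(\Sigma_n<t)\downarrow\P(\Theta<t)>0$ and $\EXP|\!\stackrel{o}{V}(t)|=\infty$. The displayed chain of inequalities (comparing to $\alpha^{-n}\sum_{j\le n}2^{n-j}T_j$) does not rescue this. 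A first-moment bound on $|\partial V(t)|$ (leaves, not internal vertices) does work for $\alpha>2$, since the leaf probability at level $n$ is $\sim g(t)\alpha^{-n}$ and $\sum(2/\alpha)^n<\infty$; but at the critical value $\alpha=2$ one has $\EXP|\partial V(t)|=\infty$ as well, and a first-moment argument cannot close the case. The argument in the references (reproduced in this paper in the proof of \autoref{limitingprofile}(ii)) instead uses that $q(t)=\P(|\partial V(t)|=\infty)$ satisfies $q'=-q+2q(\alpha t)-q^2(\alpha t)$ with $q(0)=0$ and $q(t)\le\P(L\ge t)\le Ce^{-t}$, and integrates to obtain $\alpha^{-1}\|q\|_{L^2}^2\le(2\alpha^{-1}-1)\|q\|_{L^1}$, forcing $q\equiv 0$ for $\alpha\ge 2$.

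Two further gaps. For hyperexplosion, your Borel--Cantelli step with thresholds $A_n\to\infty$ only yields $L_n\le A_n$ eventually, which does not bound $L=\sup_nL_n$; the fix is exactly the decomposition $L\le\sum_jM_j$ used in \autoref{expl_asymptotics}. For part (ii), your budget allocation requires both $c>\ln 2$ (for $2(1-e^{-c})>1$) and $\sum_nc\alpha^{-n}<t$, which forces $t>\alpha\ln 2/(\alpha-1)$; so the comparison works only for large $t$, not small $t$ as you state. Moreover, the event $[|\partial V(t)|=\infty]$ is not monotone in $t$ (indeed $\partial V(t)=\emptyset$ for $t>L$, and by \autoref{infleaves} the set of $t$ with $|\partial V(t)|=\infty$ is essentially the random range $\{\Theta_s:s\in\partial\T\}$), so you cannot propagate from one $t$ to another by monotonicity. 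One way to reach all $t>0$ is to first handle large $t$ by your supercritical comparison and then use the recursion $q(t)=e^{-t}+\int_0^te^{-s}(1-q(\alpha(t-s)))^2ds$ together with $q\not\equiv 1$ to deduce $q(t)<1$ for every $t>0$.
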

\begin{remark}\label{contdist} Note that $u(t)=\P(S>t)$ satisfies \eqref{mildalpha} with $u_0=1$, while $u(t)=\P(L\le t)$ satisfies \eqref{mildalpha} with $u_0=0$. Hence, the explosion time $S$ and the hyperexplosion time $L$ have continuous distributions. 
\end{remark}

On the non-explosion event $[S<t]$, the recursion must yield the minimal 
solution process. In the explosive case, however, non-minimal solution processes can 
be constructed, such as the {\em maximal solution process} (for $u_0\ge 0$)
\begin{equation}\label{max_sol_proc}
\uprX(t)=u_0^{|\partial V(t)|}.
\end{equation}
In some cases, this leads to multiple solutions of \eqref{mildalpha} for the same initial data 
\cite{alphariccati}.

In \autoref{PicardIteration}, we will introduce the method of {\em stochastic Picard iterations} 
to construct non-minimal solution processes, which 
in conjunction with transformations of stochastic processes described in the next section 
will allow us to prove nonuniqueness results for \eqref{mildalpha} for a range of initial 
data (\autoref{main}). The essential property that allows us to construct multiple solution processes corresponding 
to the same initial data $u_0$ is the {\em hyperexplosion} of the ASA cascade for $\alpha>1$. This will be explored in \autoref{Sec5}.

\subsection{Transformations of Solution Processes}\label{stoch_transf}

We now specify the transformations of solution processes, which are the main tools in the paper, and outline the strategy of using them to generate multiple solutions. Notably, these transformations do not appear to have deterministic counterparts. 
 
To obtain the nonuniqueness results of \autoref{main}, we connect the additive solution processes $\sX(t)$ satisfying \eqref{pantbinary} with $a=2$ and $w_0=0$ 
to the multiplicative solution processes $X=X_\lambda$ satisfying  \eqref{alphrecursion} 
with $u_0=1$ via
\begin{equation}\label{trans1}
X_\lambda(t)=e^{\lambda\sX(t)},\quad \lambda\in\mathbb{R}.
\end{equation}
A single nontrivial solution process $\sX$ for the pantograph equation with 
$w_0=0$, $a=2$, generates a one-parameter family of distinct multiplicative solution processes $X_{\lambda}$, all corresponding to $u_0=1$. 
In the case $\sX\ge 0$ and $\lambda<0$, $X_\lambda$'s are automatically integrable and 
therefore result in distinct solutions $u_\lambda=\EXP(X_\lambda)$ to the $\alpha$-Riccati equation with 
$u_0=1$.
Note that knowledge of $w = \EXP(\sX)$ is not enough to define $u_\lambda$. Detailed 
information obtained from $\sX$ is necessary to obtain $u_\lambda$. We use the transformation \eqref{trans1} to prove \autoref{main} in the case $\alpha>2$ (\autoref{alpha_bgr_2_case}).

Another way to generate solution processes satisfying  \eqref{alphrecursion}  with to $u_0=1$ 
is by noting that if $X$ is such a solution process then 
\begin{equation}\label{trans2}
X_\lambda(t)=X(t)^{\lambda},\qquad \lambda\in\mathbb{R}
\end{equation}
is also a solution process corresponding to $u_0=1$.
In order to ensure that $X_\lambda$ are distinct processes, one needs to construct a 
{\em non-indicator} solution process $X$. We say that $X(t)$ is an {\em indicator} process if for any $t>0$, $X(t)\in\{0,1\}$ almost surely. If $X$ is non-indicator and $X(t)\in[0,1]$ a.s for any 
$t>0$ then $X_\lambda$ are integrable for all $\lambda>0$ and the family $\{u_\lambda=\EXP(X_\lambda)\}_{\lambda>0}$ are distinct solutions to \eqref{mildalpha} with $u_0=1$. We use the transformation \eqref{trans2} to prove \autoref{main} in the case $1<\alpha\le 2$ (\autoref{firstcase}).

To generate solution processes satisfying  \eqref{alphrecursion} with $u_0\ne 1$, we use the following observation. Given a solution process $\tilde{X}_{u_0}$ satisfying \eqref{alphrecursion} with initial data 
$u_0$ and solution processes $X_\lambda$ corresponding to $u_0=1$, it is simple to check that
\begin{equation}\label{trans3}
X_{u_0,\lambda}(t)=\tilde{X}_{u_0}(t) X_{\lambda}(t), \qquad \lambda\in\mathbb{R}
\end{equation}
are solution processes satisfying \eqref{alphrecursion} with the initial data $u_0$.
In order to obtain distinct processes $X_{u_0,\lambda}$ for
distinct $\lambda\in\mathbb{R}$, the original process $\tilde{X}$ must be {\em non-minimal}. 
Indeed, as noted in \autoref{binary_setup}, on the non-explosion event $[S>t]$, all the solution 
processes constructed over the ASA cascade $\bY$ by the recursion 
 \eqref{alphrecursion} must coincide with the minimal process, yielding
$\lwrX X_{\lambda}(t)=\lwrX X_{\lambda}(t)\IND{[S>t]}=\lwrX 1\,\IND{[S>t]}=\lwrX$.
We use the transformation \eqref{trans3} to prove \autoref{main} for initial data $u_0$ in a suitable range (\autoref{generalinitialdata}). As it turns out, a natural choice for $\tilde{X}_{u_0}$ is the maximal 
solution process given by \eqref{max_sol_proc}.

The flexibility of using solution processes to generate new solutions to a mean flow equation is remarkable. As illustrated here for the case of the pantograph equation and $\alpha$-Riccati equation, solution processes to different mean flow equations (or to the same mean flow equation with different initial data) that share the same ASA cascade can be amalgamated to produce new solutions.

 
 
 \section{Nonuniqueness of solutions for the Pantograph Equation 
 -- Transformation of Unary Solution Processes}\label{Sec3}

To generate a one-parameter family of distinct solutions $u_\lambda$ to the $\alpha$-Riccati equation with $u_0=1$ using the transformation \eqref{trans1},
one needs the existence of a nontrivial solution process $\sX$ associated with the pantograph equation with $w_0=0$.
For this purpose, we construct solution $\eta(t)$ with algebraic decay to the pantograph equation using the unary tree approach described in 
\autoref{prob_setup_pant}. 
The algebraic decay of $\eta$ 
plays an essential role in the construction of the process $\sX$ via a stochastic Picard 
iterations (\autoref{alpha_bgr_2_case}). For a fixed $\alpha>0$, denote
\begin{equation}\label{1190-1}
\gamma_a=\log_{\alpha}{a}=\frac{\ln\, a }{\ln\, \alpha}.
\end{equation}
\begin{thm}\label{prop11112}
Let $a>0, \alpha>\max\{a, 1\}$ and define 
 \begin{equation}\label{X_*-def}
 \tsX_*(t)=\tsX_*(t;a,\alpha)=(t-\tS)^{-\gamma_a} \IND{[\tS<t]},
 \end{equation}
  where $
\tS$ is the unary explosion time given by \eqref{tilde_S}.
Then
\begin{enumerate}[(i)]
\item $\tsX_*$ a.s. satisfies \eqref{pantrecursion} with $w_0=0$.
\item $\eta(t)=\EXP(\tsX_*(t))$ satisfies \eqref{pantode} with $w_0=0$.
\item $\lim\limits_{t\to\infty}\frac{\eta(t)}{t^{-\gamma_a}}=1$.
\end{enumerate}
\end{thm}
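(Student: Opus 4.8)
The plan is to put all three parts on a single realization of the unary cascade $\{T_j\alpha^{-j}\}_{j\ge 0}$ and to exploit two elementary facts: the self-similar splitting $\tS = T_0 + \alpha^{-1}\tS^{(1)}$, where $\tS^{(1)}=\sum_{j\ge 0}T_{j+1}\alpha^{-j}$ is the explosion time of the subtree below the root (so $\tS^{(1)}\eqdstn\tS$ and $\tS^{(1)}$ is independent of $T_0$), together with the defining identity $\alpha^{\gamma_a}=a$. For (i), set $\tsX_*^{(1)}(s)=(s-\tS^{(1)})^{-\gamma_a}\IND{[\tS^{(1)}<s]}$, which is an independent copy of $\tsX_*$. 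On $\{T_0\ge t\}$ one has $\tS\ge T_0\ge t$, so $\tsX_*(t)=0=w_0$. On $\{T_0<t\}$, substituting $s=\alpha(t-T_0)$ and using $\alpha(t-T_0)-\tS^{(1)}=\alpha(t-\tS)$ together with $\{\alpha(t-T_0)>\tS^{(1)}\}=\{t>\tS\}$ gives
\[
a\,\tsX_*^{(1)}\bigl(\alpha(t-T_0)\bigr)=a\,\bigl(\alpha(t-\tS)\bigr)^{-\gamma_a}\IND{[\tS<t]}=a\,\alpha^{-\gamma_a}(t-\tS)^{-\gamma_a}\IND{[\tS<t]}=\tsX_*(t),
\]
since $a\,\alpha^{-\gamma_a}=1$. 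Thus \eqref{pantrecursion} with $w_0=0$ holds pathwise, and $\tsX_*$, being a jointly measurable function of $(t,\tS)$, may be taken progressively measurable with respect to a suitable filtration.

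For (ii), I would first record the regularity of $\eta$. Since $a<\alpha$ and $\alpha>1$, we have $\gamma_a=\log_\alpha a<1$. The variable $\tS$ has a continuous density $f_{\tS}$ that is bounded (by $1$; e.g.\ $f_{\tS}(s)=F_{\tS}(\alpha s)-F_{\tS}(s)$, obtained by differentiating the renewal-type identity $F_{\tS}(s)=e^{-s}\int_0^s e^{u}F_{\tS}(\alpha u)\,du$ coming from $\tS=T_0+\alpha^{-1}\tS^{(1)}$). Hence
\[
\eta(t)=\EXP\bigl[(t-\tS)^{-\gamma_a}\IND{[\tS<t]}\bigr]=\int_0^t (t-s)^{-\gamma_a}f_{\tS}(s)\,ds\le \|f_{\tS}\|_{\infty}\,\frac{t^{1-\gamma_a}}{1-\gamma_a}<\infty,
\]
which is locally bounded in $t$ and vanishes as $t\to 0^+$. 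Conditioning on $T_0$ in the recursion of part (i) (legitimate by Tonelli, the integrand being nonnegative) yields $\eta(t)=a\int_0^t e^{-s}\eta(\alpha(t-s))\,ds$, i.e.\ \eqref{mildpant} with $w_0=0$; after the substitution $r=\alpha(t-s)$ this becomes $\eta(t)=\tfrac{a}{\alpha}e^{-t}\int_0^{\alpha t}e^{r/\alpha}\eta(r)\,dr$, from which a standard bootstrap gives $\eta\in C^\infty(0,\infty)$, and differentiation produces \eqref{pantode} with $w_0=0$.

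For (iii), rescaling gives $\eta(t)/t^{-\gamma_a}=\EXP\bigl[(1-\tS/t)^{-\gamma_a}\IND{[\tS<t]}\bigr]$, whose integrand tends to $1$ a.s.\ as $t\to\infty$ since $\tS<\infty$ a.s.\ (here $\alpha>1$ is used). If $a\le 1$ then $\gamma_a\le 0$, the integrand is bounded by $1$, and dominated convergence finishes. If $a>1$ then $\gamma_a\in(0,1)$, and I would split the expectation according to $\{\tS\le t/2\}$ and $\{t/2<\tS<t\}$: on the first event the integrand is bounded by $2^{\gamma_a}$ and $\P(\tS\le t/2)\to 1$, so dominated convergence makes that piece tend to $1$; the second piece equals $t^{\gamma_a}\int_{t/2}^t (t-s)^{-\gamma_a}f_{\tS}(s)\,ds$, which I would bound using the exponential tail $f_{\tS}(s)\le C e^{-s/2}$ — available from the explicit series \eqref{complementary}, or from $\P(\tS>s)\le e^{-s/2}\EXP[e^{\tS/2}]$ with $\EXP[e^{\tS/2}]=\prod_{j\ge 0}(1-\tfrac12\alpha^{-j})^{-1}<\infty$ — to obtain a bound of order $t\,e^{-t/4}\to 0$.

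The main obstacle is not the algebra of part (i) but the analytic control underlying (ii) and (iii): proving $\eta(t)<\infty$ despite the blow-up of $\tsX_*$ as $t\downarrow\tS$, which is exactly where the hypothesis $\alpha>a$ (equivalently $\gamma_a<1$) enters, and, in (iii) for $a>1$, estimating the contribution of the range $\tS\approx t$, which requires the exponential tail estimate on $\tS$.
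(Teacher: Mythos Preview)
Your argument is correct in all three parts. Part (i) is identical to the paper's proof: both exploit the splitting $\tS=T_0+\alpha^{-1}\tS^{(1)}$ and the identity $a\alpha^{-\gamma_a}=1$ to verify \eqref{pantrecursion} pathwise.

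For (ii) and (iii) your route differs from the paper's in the analytic ingredients. The paper bounds the density $g$ of $\tS$ by $Ce^{-t}$ directly from the explicit series \eqref{complementary}; you instead derive the renewal identity $f_{\tS}(s)=F_{\tS}(\alpha s)-F_{\tS}(s)\le 1$, which is more elementary and self-contained, and suffices (together with $\gamma_a<1$) for local integrability of $\eta$. For (iii) with $\gamma_a\in(0,1)$, the paper proves uniform integrability of $Z(t)=(1-\tS/t)^{-\gamma_a}\IND{[\tS<t]}$ by bounding $\EXP[Z(t)^p]$ for some $p\in(1,1/\gamma_a)$ via $g(s)\le Ce^{-s}$ and L'H\^opital; you instead split on $\{\tS\le t/2\}$ versus $\{t/2<\tS<t\}$, using dominated convergence on the first piece and the density tail bound $f_{\tS}(s)\le G(s)\le Ce^{-s/2}$ (your renewal identity combined with the MGF/Chernoff bound) on the second. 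Both arguments work; the paper's $L^p$ bound is slightly slicker and yields a reusable moment estimate, while your split is more hands-on and avoids appealing to the series formula for $g$ altogether. One small remark: your parenthetical ``or from $\P(\tS>s)\le e^{-s/2}\EXP[e^{\tS/2}]$'' gives a tail bound, not a density bound, but combining it with your own identity $f_{\tS}(s)\le G(s)$ closes the gap, so the argument is complete as written.
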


\begin{proof}
(i) Let $\tS^{(1)}=\sum_{j=1}^\infty\frac{T_j}{\alpha^{j-1}}$ and $\tsX_*^{(1)}(\tau)
=(\tau-\tS^{(1)})\IND{[\tS^{(1)}<\tau]}$.
Note that $\tS$ and $\tS^{(1)}$  are identically distributed. So are $\tsX$ and $\tsX_*^{(1)}$. We have
\begin{align*}
\tsX_*(t)&=(t-\tS)^{-\gamma}\IND{[\tS<t]}=\left(t-T_0-\frac{1}{\alpha}\tS^{(1)}\right)^{-\gamma}
\IND{[T_0+\frac{1}{\alpha}\tS^{(1)}<t]}\\
&=
\frac{1}{\alpha^{-\gamma}}\left(\alpha(t-T_0)-\tS^{(1)}\right)^{-\gamma}\IND{[\tS^{(1)}<\alpha(t-T_0)]}=
a\tsX^{(1)}_*(\alpha(t-T_0))\,.
\end{align*}
If $T_0\ge t$, the calculations above yield $\tsX_*(t)=0$. Thus, $\tsX_*(t)$ in distribution 
satisfies \eqref{pantrecursion} with $w_0=0$.

(ii) By \eqref{complementary}, one obtains the $pdf$ of the unary explosion time $\tS$
\[
g(t) = - G'(t)= C_\alpha \sum_{n=0}^\infty \frac{\alpha^n e^{-\alpha^n t}}{\prod_{j=1}^n (1-\alpha^{-j})}
\]
since term-by-term differentiation can easily be justified. Consequently, $g(t) \le C e^{-t}$ for all $t\ge 0$ and 
thus $\tsX(t)$ is integrable. The fact that $\eta(t)$ satisfies \eqref{mildpant} with $u_0=0$ can be justified from \eqref{pantrecursion} as follows.
\begin{eqnarray*}
\eta (t)=\EXP( \tsX_*(t))&=&\EXP\left( \left( t-T_0-\frac{1}{\alpha }\tS^{(1)} \right)^{-\gamma_a} 
\IND{[\tS^{(1)}<\alpha (t-T_0)]} \right) \\
&=& \alpha^{\gamma_a}
\int_0^t e^{-s} \EXP \left(\left(\alpha(t-s) - \tS^{(1)}\right)^{-\gamma_a} \IND{[\tS^{(1)} < \alpha(t-s)]}  \right) 
ds \\
&=& a \int_0^t e^{-s} \eta(\alpha(t-s)) ds.
\end{eqnarray*}
(iii) Let $\phi(t)=t^{-\gamma_a}\IND{t>0}$ and extend $g$ by 0 on the interval 
$(-\infty,0)$. Then
\[\eta (t)=\int_{0}^{t}{{{(t-s)}^{-\gamma_a}}g(s)ds}=\int_{-\infty }^{\infty }{\phi (t-s)g(s)ds}\]
and
\[\frac{\eta (t)}{{{t}^{-\gamma_a }}}=\int_{-\infty }^{\infty }{\frac{\phi (t-s)}{\phi (t)}g(s)ds}=\EXP(Z(t))\]
where $Z(t)=\frac{\phi(t-\tS)}{\phi(t)}\IND{[\tS<t]}$. Due to explosion, $\tS<\infty$ a.s. Thus,
$\lim_{t\to\infty}Z(t)=1$ a.s. We show that 
\begin{equation}\label{lim1}\lim_{t\to\infty}\EXP (Z(t))= \EXP(1)=1. 
\end{equation}
If $\gamma_a\le 0$ then $0\le Z(t)\le 1$ for any $t>0$. Then \eqref{lim1} follows by Lebesgue's Dominated Convergence Theorem. For $\gamma\in(0,1)$, we will show \eqref{lim1} by showing that there exist $p\in(1,\infty)$ and $\tilde{C}>0$ such that 
\[\EXP(Z(t)^p)\le \tilde{C}\ \ \ \ \forall\,t\ge 1.\]
Fix $p\in(1,\frac{1}{\gamma_a})$. It suffices to show that $\limsup_{t\to\infty}\EXP(Z(t)^p)<\infty$. We have
\begin{eqnarray*}
\mathbb{E}(Z{{(t)}^{p}})&=&\EXP\left( \frac{{{\phi }^{p}}(t-S )}{{{\phi }^{p}}(t)}{{1}_{S <t}} \right)
=\int_{0}^{t}{\frac{{{(t-s)}^{-\gamma_a p}}}{{{t}^{-\gamma_a p}}}g(s)ds}\\
&=&\int_{0}^{t}{\frac{{{s}^{-\gamma_a p}}}{{{t}^{-\gamma_a p}}}g(t-s)ds}\le C\int_{0}^{t}
{\frac{{{s}^{-\gamma_a p}}}{{{t}^{-\gamma_a p}}}{{e}^{-(t-s)}}ds}\\
&=&C\frac{\int_{0}^{t}{{{s}^{-\gamma_a p}}{{e}^{s}}ds}}{{{t}^{-\gamma_a p}}{{e}^{t}}}.
\end{eqnarray*}
By L'Hospital's Rule, 
\begin{eqnarray*}
\underset{t\to \infty }{\mathop{\limsup }}\,\mathbb{E}[Z{{(t)}^{p}}]
&\le& C\underset{t\to \infty }{\mathop{\lim }}\,
\frac{\int_{0}^{t}{{{s}^{-\gamma_a p}}{{e}^{s}}ds}}{{{t}^{-\gamma_a p}}{{e}^{t}}}
=C\underset{t\to \infty }{\mathop{\lim }}\,\frac{{{t}^{-\gamma_a p}}{{e}^{t}}}{{{t}^{-\gamma_a p}}{{e}^{t}}
-\gamma_a p{{t}^{-\gamma_a p-1}}{{e}^{t}}}\\
&=&C\underset{t\to \infty }{\mathop{\lim }}\,\frac{1}{1-\frac{\gamma_a p}{t}}=C.
\end{eqnarray*}
The proof is complete.
\end{proof}

\begin{remark}
An alternative proof of (iii) in \autoref{prop11112} can be obtained using Karamata Tauberian 
theorem since the Laplace transform of $\eta$ can be computed explicitly.  
However, the argument using uniform integrability in the above proof is more direct.  
\end{remark}

It is unclear how to relate a solution to \eqref{pantode} 
(with $w_0=0$) with parameters $(a,\alpha)$ to that with parameters $(a',\alpha)$ to each other at the level of differential equations.  However, the result below shows 
how a simple transformation at the level of unary solution process can forge such a connection. 

\begin{cor}{\label{exampleone}}
Under the same assumptions of \autoref{prop11112} and with $\gamma_a$ given by \eqref{1190-1}, for any $\delta>0$ such that $\gamma_a\delta<1$, the function
\begin{equation}\label{etadelta}\eta_\delta(t)=\EXP(\tsX_*(t;a,\alpha)^\delta)\end{equation}
satisfies
\begin{equation}\label{etadelta1}\eta'_{\delta}=-\eta_{\delta}+a^\delta\eta_{\delta}(\alpha t),\quad \eta_\delta(0)=0,\end{equation} 
and
\begin{equation}\label{11132}\lim_{t\to\infty}\frac{\eta_\delta(t)}{t^{-\gamma\delta}}=1.\end{equation}
\end{cor}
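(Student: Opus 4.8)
The plan is to observe that $\tsX_*(t;a,\alpha)^\delta = \big((t-\tS)^{-\gamma_a}\IND{[\tS<t]}\big)^\delta = (t-\tS)^{-\gamma_a\delta}\IND{[\tS<t]}$, so that $\tsX_*(t;a,\alpha)^\delta = \tsX_*(t;a',\alpha)$ with the new parameter $a'$ chosen so that $\gamma_{a'}=\gamma_a\delta$, i.e. $a' = \alpha^{\gamma_a\delta} = a^{\delta}$. Thus the $\delta$-power transformation of the unary solution process is nothing but a reparametrization $a\mapsto a^\delta$. The hypothesis $\gamma_a\delta<1$ guarantees $\gamma_{a'}<1$, and since $a'=a^\delta>0$ we need to check that $\alpha>\max\{a',1\}$ so that \autoref{prop11112} applies to parameters $(a',\alpha)$; this is where a small case analysis is needed. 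If $a\le 1$ then $a'=a^\delta\le 1$ when $\delta\ge 1$ but could exceed $1$ only if $a<1$ and $\delta<1$ is not the issue — in fact $a<1\Rightarrow a^\delta<1$ for $\delta>0$, and $a=1\Rightarrow a'=1<\alpha$; if $1<a<\alpha$ then $\gamma_a\in(0,1)$ and $\gamma_a\delta<1$ forces $a'=\alpha^{\gamma_a\delta}<\alpha$, so in all cases $\alpha>\max\{a',1\}$. Hence \autoref{prop11112} applies verbatim with $(a,\alpha)$ replaced by $(a^\delta,\alpha)$.

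From this reduction, part (i) of \autoref{prop11112} gives that $\tsX_*(t;a^\delta,\alpha)$ a.s.\ satisfies \eqref{pantrecursion} with $w_0=0$ and coefficient $a^\delta$; part (ii) gives that $\eta_\delta(t)=\EXP(\tsX_*(t;a,\alpha)^\delta)=\EXP(\tsX_*(t;a^\delta,\alpha))$ satisfies \eqref{pantode} with $w_0=0$ and coefficient $a^\delta$, which is exactly \eqref{etadelta1}; and part (iii) gives
\[
\lim_{t\to\infty}\frac{\eta_\delta(t)}{t^{-\gamma_{a^\delta}}}=1,
\]
where $\gamma_{a^\delta}=\log_\alpha(a^\delta)=\delta\log_\alpha a=\gamma_a\delta$. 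Writing $\gamma$ for $\gamma_a$ as in the statement gives \eqref{11132}. The only subtlety to flag in the write-up is that \eqref{pantrecursion} as stated uses the specific holding time $T_0$ and an independent copy $\tsX^{(1)}$; but the displayed computation in the proof of \autoref{prop11112}(i) already establishes the identity $\tsX_*(t)=a\,\tsX_*^{(1)}(\alpha(t-T_0))$ in distribution for the generic parameter, so raising both sides to the $\delta$-power (and using that $\tsX_*^{(1)}(\cdot)\ge 0$ so the power is well-defined and distributes over the product structure) preserves it.

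The main obstacle — and it is a minor one — is simply confirming the parameter constraint $\alpha>\max\{a^\delta,1\}$ holds under the hypothesis $\gamma_a\delta<1$ together with the standing assumption $\alpha>\max\{a,1\}$, which amounts to the elementary monotonicity observations sketched above; no new analytic estimates are required. Everything else is a direct invocation of \autoref{prop11112}, so the corollary is essentially a restatement of that theorem under the substitution $a\rightsquigarrow a^\delta$, made visible by the algebraic identity $\big((t-\tS)^{-\gamma_a}\big)^\delta = (t-\tS)^{-\gamma_a\delta}$.
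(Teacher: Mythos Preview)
Your proposal is correct and follows essentially the same approach as the paper: both reduce the corollary to \autoref{prop11112} via the identity $\tsX_*(t;a,\alpha)^\delta=\tsX_*(t;a^\delta,\alpha)$ and the reparametrization $a\mapsto a'=a^\delta$. The only difference is that the paper simply asserts $a'=a^\delta<\alpha$, whereas you spell out the (elementary) case analysis verifying $\alpha>\max\{a^\delta,1\}$ under the hypothesis $\gamma_a\delta<1$.
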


\begin{proof}
Let $a'=a^\delta<\alpha$. By their definitions,
$\gamma_{a'}=\delta\gamma_a$ and
$(\tsX_*(t;a,\alpha))^\delta=\tsX_*(t;a',\alpha)$.  \autoref{prop11112} implies that the function $\eta_\delta$ given by \eqref{etadelta}
satisfies \eqref{etadelta1} and \eqref{11132}.
\end{proof}

Our analysis of the pantograph equation \eqref{pantode} via the unary process \eqref{pantrecursion} complements the well-developed theory in \cite{kato1971functional}, which uses more traditional analytical methods, by providing a simple construction of a solution $\eta$ decaying at a critical algebraic rate. With the existence of such a solution, we now turn back to the problem of constructing solutions to \eqref{mildalpha} using the transformations \eqref{trans1} and \eqref{trans2}. The method of building solution processes is by stochastic Picard iterations 
described in the next section.

\section{Stochastic Picard Iterations}\label{PicardIteration}


The method of {\em stochastic Picard ground state iterations}, 
or simply {\em stochastic Picard iterations}, was essentially introduced in \cite{lejan} to prove a uniqueness result for the Navier-Stokes equations. It was later elaborated in \cite{alphariccati} as an approach to show the nonuniqueness of solutions to the mean flow equations from stochastic explosion.  This method is suitable for both linear and nonlinear equations in contrast to a more typical probabilistic approach to linear parabolic equations, where explosion of the associated Markov process can be exploited for nonuniqueness 
by restarting the process at the time of explosion.

The stochastic Picard iteration corresponding to \eqref{alphrecursion} is
\begin{equation}\label{X_n-eq}
X_n(t) \stackrel{\rm a.s.}{=} 
\left\{ \begin{array}{*{35}{l}}
   u_0 & \text{if}\ T_\theta>t  \\
   {X^{(1)}_{n-1}}(\alpha(t-T_\theta))\,{X^{(2)}_{n-1}}(\alpha(t-T_\theta)) & \text{otherwise} ,\\
\end{array} \right.\end{equation}
where $T_\theta$ is a mean-one exponentially distributed random variable; $X_{n-1}^{(1)}$ and $X_{n-1}^{(2)}$ are independent copies of 
$X_{n-1}$ that are also independent of $T_\theta$. The initial stochastic process $X_0(t)$, called \emph{ground state}, is to be chosen to guarantee the convergence of the sequence $\{X_n\}$.
Since $X_0(t)$ is a stochastic process, one obtains by induction on $n\in\mathbb{N}$ that $X_n(t)$ is a 
well-defined progressively measured stochastic process. Moreover, if $ u^{(0)} =\EXP (X_0)$ is 
well-defined, then the sequence $u^{(n)}=\EXP(X_n)$ is well-defined and
formally satisfies the Picard iterations of \eqref{mildalpha}:
\begin{eqnarray}\label{Pic-iteration}
u^{(n)}(t) &=& u_0e^{-t} + \int_0^t e^{-s}[u^{(n-1)}(\alpha (t-s))]^2 ds.
\end{eqnarray}
One has the following result about the convergence of a stochastic Picard iterations to a solution process.

\begin{thm}\label{X_n-conv}
Let $X_n(t)$ be a sequence satisfying the stochastic Picard iterations \eqref{X_n-eq}. 
Suppose that for all $t>0$, 
$X_n(t)$ converges a.s.\ as $n\to\infty$. 
Then there exists a stochastic process $X(t)$ such that $X_n(t,\omega)\to X(t,\omega)$ a.e. on $[0,\infty)\times\Omega$ 
with respect to the product measure $\mu_{[0,\infty)}\otimes\P$ as $n\to\infty$, where 
$\mu_{[0,\infty)}$ is a 
Borel measure on $[0,\infty)$. Moreover, for all $t>0$, $X(t)$ is a solution process for 
the $\alpha$-Riccati equation, i.e.\
satisfying \eqref{alphrecursion}. 
\end{thm}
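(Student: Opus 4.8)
The plan is to define $X(t,\omega)$ as the pointwise almost-sure limit of $X_n(t,\omega)$, to establish that this limit is jointly measurable (up to a null set) and progressively measurable, and then to pass to the limit in the recursion \eqref{X_n-eq} to see that $X$ satisfies \eqref{alphrecursion}. The starting point is the hypothesis: for each fixed $t>0$, the sequence $X_n(t,\cdot)$ converges $\P$-a.s. Setting $X(t,\omega)=\lim_n X_n(t,\omega)$ on the convergence set and, say, $0$ off it, we get a process defined for each fixed $t$ up to a $\P$-null set. To upgrade this to convergence $\mu_{[0,\infty)}\otimes\P$-a.e.\ on $[0,\infty)\times\Omega$, I would invoke Fubini's theorem: the set $D=\{(t,\omega):X_n(t,\omega)\text{ does not converge}\}$ is measurable because each $X_n$ is progressively (hence jointly) measurable, and every $t$-section of $D$ is $\P$-null by hypothesis, so $(\mu_{[0,\infty)}\otimes\P)(D)=\int \P(D_t)\,\mu_{[0,\infty)}(dt)=0$. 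Hence $X_n\to X$ a.e.\ on the product space, and $X$ is jointly measurable.

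Next I would address progressive measurability of $X$. Each $X_n$ is progressively measurable with respect to the natural filtration generated by the ASA cascade (equivalently, by $T_\theta$ together with the sub-cascades feeding $X_n^{(1)},X_n^{(2)}$). For fixed $T\ge 0$, the restriction of $X_n$ to $[0,T]\times\Omega$ is $\mathcal{B}([0,T])\otimes\mathcal{F}_T$-measurable, and a pointwise a.e.\ limit of such maps is again $\mathcal{B}([0,T])\otimes\mathcal{F}_T$-measurable; after modifying $X$ on a $\mu_{[0,\infty)}\otimes\P$-null set (which does not affect the a.s.\ relations we need for any fixed $t$, once we check null sections) one obtains a progressively measurable version. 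Some care is needed here: the exceptional null set for progressive measurability is in the product space, but the statement of the theorem only claims \eqref{alphrecursion} holds a.s.\ for each fixed $t>0$, which is the weaker and harmless notion, so I would simply record that we take the jointly measurable version and that for each fixed $t$ the process agrees a.s.\ with the fixed-$t$ limit defined above.

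Finally, to verify the recursion, fix $t>0$ and condition on $T_\theta$. On $[T_\theta\ge t]$ we have $X_n(t)=u_0$ for every $n\ge 1$ by \eqref{X_n-eq}, hence $X(t)=u_0$ there. On $[T_\theta<t]$, \eqref{X_n-eq} gives $X_n(t)=X_{n-1}^{(1)}(\alpha(t-T_\theta))\,X_{n-1}^{(2)}(\alpha(t-T_\theta))$. Here the key point is that, conditionally on $T_\theta$, the value $\alpha(t-T_\theta)$ is a fixed positive number for which the fixed-$t$ a.s.\ convergence hypothesis applies to the independent copies $X_{n-1}^{(1)}$ and $X_{n-1}^{(2)}$; thus $X_{n-1}^{(i)}(\alpha(t-T_\theta))\to X^{(i)}(\alpha(t-T_\theta))$ a.s.\ for $i=1,2$, where $X^{(1)},X^{(2)}$ are the limits of the respective sub-cascade iterations and are independent copies of $X$, independent of $T_\theta$. (To make this rigorous one conditions on $T_\theta=\tau$, applies the fixed-time convergence at time $\alpha(t-\tau)$, and integrates over the law of $T_\theta$; the independence structure of the ASA cascade guarantees the sub-cascades attached to the two children are i.i.d.\ copies of the whole cascade.) Passing to the limit in the product $X_{n-1}^{(1)}\cdot X_{n-1}^{(2)}$, which is a finite product of a.s.-convergent factors, yields $X(t)=X^{(1)}(\alpha(t-T_\theta))X^{(2)}(\alpha(t-T_\theta))$ a.s.\ on $[T_\theta<t]$. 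Combining the two cases gives \eqref{alphrecursion}.

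The main obstacle I anticipate is the bookkeeping in the last step: justifying that ``for a.e.\ realization of $T_\theta$, the a.s.\ convergence of $X_{n-1}^{(i)}$ at the deterministic point $\alpha(t-T_\theta)$ holds'' requires that the null set of non-convergence at a given time is handled uniformly in the time argument — which is exactly where the product-space a.e.\ convergence established via Fubini in the first paragraph is used, rather than the bare fixed-$t$ hypothesis. One feeds $\alpha(t-\tau)$, for $\tau$ ranging over the (nonatomic) law of $T_\theta$, into the product-measure-full-measure convergence set; since $\mu_{[0,\infty)}$ can be chosen mutually absolutely continuous with Lebesgue measure on $(0,\infty)$ and the law of $\alpha(t-T_\theta)$ on $(0,t)$ is absolutely continuous, a.e.-$\mu_{[0,\infty)}$ convergence transfers to a.s.-$T_\theta$ convergence. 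Everything else — measurability of the non-convergence set, the Fubini argument, and the limit in a two-factor product — is routine.
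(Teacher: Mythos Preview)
Your proposal is correct and follows essentially the same route as the paper: Fubini on the jointly measurable non-convergence set to upgrade fixed-$t$ a.s.\ convergence to product-a.e.\ convergence, then pass to the limit in \eqref{X_n-eq} by conditioning on $T_\theta$.

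The one divergence is your final paragraph, and the worry there is misplaced. The argument you already sketched in the third paragraph---fix $T_\theta=\tau$, apply the fixed-$t$ hypothesis at the deterministic time $\alpha(t-\tau)$ to the i.i.d.\ copies $X_{n-1}^{(j)}$, then integrate over $\tau$---is exactly what the paper does, and it requires neither uniformity of null sets nor the product-a.e.\ convergence from your first paragraph. For \emph{every} fixed $\tau\in(0,t)$ the hypothesis gives $\P^{(j)}$-a.s.\ convergence of $X_n^{(j)}$ at time $\alpha(t-\tau)$ (the subcascade sequence $(X_n^{(j)})_n$ has the same joint law as $(X_n)_n$); joint measurability of $X_n^{(j)}$ in $(t,\omega^{(j)})$ then legitimizes Fubini over $\tau$, and $e^{-t}+\int_0^t e^{-\tau}\cdot 1\cdot 1\,d\tau=1$ gives a set of full $\P$-measure on which \eqref{alphrecursion} follows from \eqref{X_n-eq} in the limit. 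Your absolute-continuity detour would also work but imposes a constraint on $\mu_{[0,\infty)}$ that the direct argument avoids.
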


\begin{proof}
We denote elements of the probability space by $\oo=(\omega_v)_{v\in \T}\in \Omega=[0,\infty)^{\T}$, with 
$(\Omega,\mathcal{B}_{\Omega},\P)$ being a product probability space of countably many 
mean-one exponential distributions defined on $\mathcal{B}_{[0,\infty)}$, the Borel 
$\sigma$-algebras of $[0,\infty)$. That is, 
$d\P=\prod_{v\in \T} d\P_v$, with $d\P_v(\omega_v)=e^{-\omega_v}d\omega_v$. 
Thus, the exponential times are $T_v(\oo)=\omega_v$. 

Denote by $\T_j=\{v\in\T\backslash\{\theta\} :\ v|1=j\}$, for $j=1,2$, the left and right subtrees of $\T$, respectively. 
Write $\ooone=(\omega_v)_{v\in\T_1}\in\Omega_1=[0,\infty)^{\T_1}$ 
and $\ootwo=(\omega_v)_{v\in\T_2}\in \Omega_2=[0,\infty)^{\T_2}$. 
Thus, we can view $(\Omega,\P)$ as a product space: 
\[\oo=(\omega_0,\ooone,\ootwo)\in\Omega=[0,\infty)\times\Omega_1\times\Omega_2,\ \ \ d\P(\oo)=d\P_\theta(\omega_\theta)\otimes d\P^{(1)}(\ooone)\otimes d\P^{(2)}(\ootwo)\] with 
$d\P_\theta(\omega_\theta)=e^{-\omega_\theta}d\omega_\theta$, and
$d\P^{(j)}(\oo^{(j)})=\prod_{v\in\T_j}e^{-\omega_v}d\omega_v$. Note that 
$(\Omega, \P)$, $(\Omega_1, \P^{(1)})$, $(\Omega_2, \P^{(2)})$ are identical probability spaces up to relabeling.

Let ${\cE}=\{(t,\overline{\omega})\in[0,\infty)\times\Omega:\ X_n(t,\overline{\omega})\ \mbox{converges 
in $\mathbb{R}$}\}$. 
Since $X_n$ is measurable in $t$ and $\overline{\omega}$, ${\cE}$ is measurable with respect 
to the $\sigma$-algebra 
$\mathcal{B}_{[0,\infty)}\otimes\mathcal{B}_{\Omega}$. Since  
$X_n(t)$ converges a.s.\ for all $t>0$, one has 
$\P(\cE_t)=1$, where $\cE_t=\{\overline{\omega}\in\Omega:(t,\overline{\omega})\in\cE\}$. By the Fubini's Theorem,
\[
[\mu_{[0,\infty)}\otimes\P](\cE^c)=\int_0^{\infty}\P(\cE^c_t)\,dt=0.
\]
Define 
\[
X(t,\overline{\omega})=\begin{cases}
\lim\limits_{n\to\infty}X_n(t,\overline{\omega}), & (t,\overline{\omega})\in E\\
0, & \mbox{otherwise}.
\end{cases}
\]
Clearly, $X(t)$ is a well-defined progressively measured stochastic process and $X_n\to X$ a.e.

To show $X(t)$ is a solution process, fix a $t>0$. Suppose $T_\theta=\omega_\theta<t$. 
In this case, in \eqref{X_n-eq} and \eqref{alphrecursion}, $X_{n-1}^{(j)}(\alpha(t-T_\theta))
=X_{n-1}(\alpha(t-\omega_\theta),\oo^{(j)})$, 
and $X^{(j)}(\alpha(t-T_\theta))=X(\alpha(t-\omega_\theta),\oo^{(j)})$, $j=1,2$. 
Note that $X_n^{(j)}\IND{[\omega_{\theta}<t]}$ is measurable in $(t,\oo)$ for all $n\in\mathbb{N}$. Let 
\[\cE^{(j)}_t(\omega_\theta)=\left\{\oo^{(j)}\in\Omega_j\,:\ \omega_{\theta}<t,\ \mbox{and}\
X_{n-1}(\alpha(t-\omega_\theta),\oo^{(j)})\to X(\alpha(t-\omega_\theta),\oo^{(j)})\ 
\mbox{as}\ n\to\infty\right\}.\] 
Since $X_n\to X$ a.s., for $\omega_\theta<t$ one has
\[\P^{(j)}(\cE^{(j)}_t(\omega_\theta))=\int_{\Omega_j}\IND{\cE^{(j)}(\omega_\theta)}(\oo^{(j)})\, 
d\P^{(j)}(\oo^{(j)})=1,\quad j=1,2.\] 
Let 
\[\tcE_t=\left\{\oo=(\omega_\theta,\ooone,\ootwo)\, :\ \omega_\theta\ge t\ \mbox{or}\ 
\left(\omega_\theta<t\ \mbox{and}\ \oo^{(j)}\in\cE^{(j)}_t(\omega_\theta),\ j=1,2\right)\right\}\,.\] 
We have
\[
\begin{aligned}
\tcE_t=&\{\oo\, :\ T_\theta(\oo)\ge t\}\\
&\cup\{\oo\, :\ T_\theta(\oo)<t\ \mbox{and}\ 
X_n(\alpha(t-T_\theta(\oo)),\oo^{(j)})\to X(\alpha(t-T_\theta(\oo)),\oo^{(j)}),\ j=1,2\}.
\end{aligned}
\]
Since $X_n(\cdot,\cdot)$, $T_\theta(\cdot)$ and the mappings $(t,\oo)\to(\alpha(t-\omega_\theta),\oo^{(i)})$, $j=1,2$,  
are measurable, we conclude that $\tcE_t$ is measurable. By Fubini's Theorem,
\[
\begin{aligned}
\P(\tcE_t) &=
e^{-t}+\int_0^t e^{-\omega_\theta}\, \int_{\Omega_1}\IND{\cE^{(1)}_t(\omega_\theta)}(\oo^{(1)})\, 
d\P^{(1)}(\oo^{(1)})\,
\int_{\Omega_2}\IND{\cE^{(2)}_t(\omega_\theta)}(\oo^{(2)})\, d\P^{(2)}(\oo^{(2)})\, d\omega_\theta\\
&=
e^{-t}+\int_0^t e^{-\omega_\theta}\, 1\cdot 1\, d\omega_\theta=1
\,.
\end{aligned}
\]
For any $\oo\in\tcE_t$,  \eqref{alphrecursion} follows from 
\eqref{X_n-eq} by taking $n\to\infty$.
\end{proof}

By a straightforward adaptation of the proof above, one obtains a similar convergence result 
for the stochastic Picard iterations corresponding to the pantograph equation  \eqref{pantode} 
in both binary tree representation \eqref{pantbinary} and the unary tree representation \eqref{pantrecursion}.

\begin{cor}\label{pant_iter_conv}
Let $\sX_n(t)$ be the sequence of binary-tree stochastic Picard iterations for \eqref{pantode}
\begin{equation}\label{pant_bin_iter}
{{\sX}_{n}}(t)=\left\{ \begin{array}{*{35}{l}}
   w_0 & \text{if} & T_\theta\ge t,  \\
   \frac{a}{2}\mathscr{\sX}_{n-1}^{(1)}(\alpha (t-T_\theta))
   +\frac{a}{2}\sX_{n-1}^{(2)}(\alpha (t-T_\theta)) & \text{if} & T_\theta<t.  \\
\end{array} \right.
\end{equation}
and $\tsX_n(t)$ is the sequence of unary-tree stochastic Picard iterations for \eqref{pantode}
\begin{equation}\label{pant_unary_iter}
{{\tsX}_{n}}(t)=\left\{ \begin{array}{*{35}{l}}
   w_0 & \text{if} & T_0\ge t,  \\
   a\mathscr{\sX}_{n-1}^{(1)}(\alpha (t-T_0)) & \text{if} & T_0<t.  \\
\end{array} \right.
\end{equation}
Denote $\mathcal{Y}_n=\sX_n$ or $\tsX$ and $\mathcal{Y}=\sX$ or $\tsX$, respectively. Suppose that for all $t>0$, $\mathcal{Y}_n(t)$ converges a.s.\ as $n\to\infty$. Then there exists 
a stochastic process 
$\mathcal{Y}(t)$ such that $\mathcal{Y}_n(t,\omega)\to \mathcal{Y}(t,\omega)$ as $n\to\infty$ a.e.\ with respect to 
the product measure $\mu_{[0,\infty)}\otimes\P$. Moreover, for all $t>0$, $\mathcal{Y}(t)$ is a solution process for the pantograph equation, i.e.\ satisfying \eqref{pantbinary} or \eqref{pantrecursion}. 
\end{cor}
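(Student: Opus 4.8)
The plan is to rerun the proof of \autoref{X_n-conv} essentially verbatim, the only structural change being that the multiplicative recursion $X^{(1)}X^{(2)}$ is replaced by the affine combination $\frac{a}{2}\sX^{(1)}+\frac{a}{2}\sX^{(2)}$ in the binary case \eqref{pant_bin_iter}, or by the single affine term $a\sX^{(1)}$ in the unary case \eqref{pant_unary_iter}. In the binary case the sample space is again $\Omega=[0,\infty)^{\T}$ carrying the product of mean-one exponential laws, with the decomposition $\oo=(\omega_\theta,\ooone,\ootwo)\in[0,\infty)\times\Omega_1\times\Omega_2$ exactly as before. In the unary case $\T$ is replaced by the unary tree whose waiting times form the i.i.d.\ family $\{T_j\}_{j\ge 0}$, the sample space is $\Omega=[0,\infty)^{\mathbb{N}}$, and the corresponding decomposition collapses to $\oo=(\omega_0,\ooone)\in[0,\infty)\times\Omega_1$, where $\Omega_1$ is the sample space of the shifted unary tree and $(\Omega,\P)$ is identical to $(\Omega_1,\P^{(1)})$ up to relabeling. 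Throughout, write $T$ for the root holding time ($T_\theta$ in the binary case, $T_0$ in the unary case) and $\mathcal{Y}_n,\mathcal{Y}$ for the iteration and its limit as in the statement.

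First I would set $\cE=\{(t,\oo):\mathcal{Y}_n(t,\oo)\text{ converges in }\mathbb{R}\}$; progressive measurability of the $\mathcal{Y}_n$ makes $\cE$ measurable for $\mathcal{B}_{[0,\infty)}\otimes\mathcal{B}_{\Omega}$, the hypothesis gives $\P(\cE_t)=1$ for every $t>0$, and Fubini yields $[\mu_{[0,\infty)}\otimes\P](\cE^c)=\int_0^\infty\P(\cE_t^c)\,dt=0$, so that $\mathcal{Y}(t,\oo):=\lim_n\mathcal{Y}_n(t,\oo)$ on $\cE$ (and $0$ elsewhere) is a well-defined progressively measurable process with $\mathcal{Y}_n\to\mathcal{Y}$ a.e.\ $\mu_{[0,\infty)}\otimes\P$. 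For the solution-process property I would fix $t>0$ and mirror the second half of the proof of \autoref{X_n-conv}: on $[T\ge t]$ both recursions read $\mathcal{Y}_n(t)=\mathcal{Y}(t)=w_0$; on $[T<t]$ one identifies $\mathcal{Y}_{n-1}^{(j)}(\alpha(t-T))$ with $\mathcal{Y}_{n-1}(\alpha(t-\omega_\theta),\oo^{(j)})$ (only $j=1$ in the unary case), shows that for each fixed $\omega_\theta<t$ the set of $\oo^{(j)}$ on which $\mathcal{Y}_{n-1}(\alpha(t-\omega_\theta),\oo^{(j)})\to\mathcal{Y}(\alpha(t-\omega_\theta),\oo^{(j)})$ has $\P^{(j)}$-measure one, assembles the measurable set $\tcE_t$ of those $\oo$ for which $T\ge t$, or $T<t$ and the offspring limits hold, and computes by Fubini $\P(\tcE_t)=e^{-t}+\int_0^t e^{-\omega_\theta}\,d\omega_\theta=1$ (with the product $1\cdot 1$ of two offspring factors inside the integral in the binary case). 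For $\oo\in\tcE_t$, letting $n\to\infty$ in \eqref{pant_unary_iter} or \eqref{pant_bin_iter} gives \eqref{pantrecursion} or \eqref{pantbinary} for $\mathcal{Y}$; the limit passes inside the recursion because $x\mapsto ax$ and $(x,y)\mapsto\frac{a}{2}x+\frac{a}{2}y$ are continuous, which is the only place linearity of the pantograph recursion stands in for the continuity of multiplication used in \autoref{X_n-conv}.

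I do not expect a genuine obstacle: the content reduces to the same two Fubini arguments and the measurability of the composed maps $(t,\oo)\mapsto(\alpha(t-\omega_\theta),\oo^{(j)})$ already handled in \autoref{X_n-conv}. The only things needing care are (i) carrying the binary and unary cases in parallel, which is exactly what forces the statement's notation $\mathcal{Y}_n=\sX_n$ or $\tsX_n$, and (ii) observing that in the unary case the single-offspring decomposition $\Omega=[0,\infty)\times\Omega_1$ still satisfies $(\Omega,\P)\cong(\Omega_1,\P^{(1)})$ up to relabeling, so the inductive identification of $\mathcal{Y}_{n-1}^{(1)}$ with $\mathcal{Y}_{n-1}$ read on $\Omega_1$ goes through unchanged.
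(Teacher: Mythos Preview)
Your proposal is correct and matches the paper's approach exactly: the paper does not even write out a separate proof, but simply states that the result follows by a straightforward adaptation of the proof of \autoref{X_n-conv}, which is precisely what you have carried out in detail. Your handling of both the binary and unary cases, and your observation that the only change is replacing the continuous map $(x,y)\mapsto xy$ by the continuous affine maps $(x,y)\mapsto\frac{a}{2}x+\frac{a}{2}y$ or $x\mapsto ax$, is exactly the adaptation the paper has in mind.
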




 \section{Explosion and Hyperexplosion of ASA cascades and Related Critical Phenomena}\label{Sec5}
This section focuses on detailed properties of the 
time-evolution of  the ASA cascade $\bY$ defined in \autoref{binary_setup}. We start with the following result regarding the distribution of explosion times.

\begin{thm}\label{expl_asymptotics}
Let $\alpha>1$.  Then as $t\to\infty$, 
\begin{equation}\label{alphasharpdecay}
\P(L\ge t)\sim e^{-t} \ \text{and}\ \P(S\ge t)\sim e^{-t}
\end{equation}
so that ${\EXP}(S)\le {\EXP}(L)<\infty$, where $f(t)\sim e^{-t}$ means $f(t)/e^{-t}\in[1/c,c]$ for 
all $t>t_0$ for some $c,t_0>0$.
 In particular, ${\EXP}(S)\le {\EXP}(L)<\infty$.
\end{thm}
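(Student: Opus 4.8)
The plan is to prove the two asymptotic equivalences in \eqref{alphasharpdecay} and then deduce finiteness of the means. For the upper bounds $\P(S \geq t) \leq \P(L \geq t) \leq C e^{-t}$, I would observe that $L \geq t$ forces \emph{every} ray to accumulate total time at least $t$, and in particular the ``leftmost'' ray (always choosing offspring $1$) must satisfy $\sum_{j=0}^\infty Y_{(1,1,\dots,1)|j} \geq t$, where the summands are $\alpha^{-j} T_{(1,\dots,1)}$ with i.i.d.\ mean-one exponentials. But this is just the unary explosion time $\tilde S$ of \eqref{tilde_S}, whose complementary distribution function $G(t) = \P(\tilde S > t)$ is given explicitly by \eqref{complementary}; from that series one reads off $G(t) = C_\alpha e^{-t}(1 + o(1))$ since the $n=0$ term dominates (all later terms decay like $e^{-\alpha^n t}$ with $\alpha^n \geq \alpha$). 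Hence $\P(L \geq t) \leq G(t) \leq C e^{-t}$. Actually one gets more: $\P(L \geq t) \leq \P(\tilde S \geq t)$ uses only one ray, which is wasteful, but it already gives the correct exponential rate for the upper bound.

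For the lower bounds $\P(S \geq t) \geq \P(L \geq t) \geq c\, e^{-t}$, the cheapest route is to bound below by the event that the root clock alone rings after time $t$: $\P(T_\theta \geq t) = e^{-t}$, and on this event no vertex has been ``used up,'' so $S \geq t$ (indeed $\partial V(t) = \{\theta\}$). This gives $\P(S \geq t) \geq e^{-t}$ immediately, so in fact $\P(S \geq t) \sim e^{-t}$ with the constant $1$ on the lower side. For the lower bound on $\P(L \geq t)$ one needs a little more, since $L \geq t$ requires the longest ray to be long; but again conditioning on $T_\theta \geq t$ is not enough because after the root fires, $L$ is rebuilt from the two subtrees. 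Instead I would use the recursive structure: $L \eqdstn T_\theta + \alpha^{-1}\max(L^{(1)}, L^{(2)})$ where $L^{(i)}$ are i.i.d.\ copies of $L$, giving $\P(L \geq t) \geq \P(T_\theta \geq t) = e^{-t}$ as well, since $L \geq T_\theta$ trivially. So both lower bounds are in fact $\geq e^{-t}$ with no loss.

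Putting the pieces together: $e^{-t} \leq \P(S \geq t) \leq \P(L \geq t) \leq G(t) \leq C_\alpha' e^{-t}$ for $t \geq t_0$, which is exactly \eqref{alphasharpdecay} with $c = \max(C_\alpha', 1)$ and the monotone relation $\P(S\ge t)\le \P(L\ge t)$ coming from $S \leq L$ pointwise (infimum versus supremum over rays in \eqref{rayexpleq}). Finiteness of the means then follows from $\EXP(S) \leq \EXP(L) = \int_0^\infty \P(L \geq t)\, dt \leq t_0 + \int_{t_0}^\infty C e^{-t}\, dt < \infty$, using that $\P(L \geq t) \leq 1$ on $[0,t_0]$.

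The only step requiring genuine care is extracting the precise asymptotic $G(t) \sim C_\alpha e^{-t}$ from the series \eqref{complementary}: one must check that $C_\alpha \neq 0$ (so the leading term does not vanish) and that the tail $\sum_{n\geq 1} e^{-\alpha^n t}/\prod_{j=1}^n(1-\alpha^j)$ is $o(e^{-t})$ as $t \to \infty$. The latter is routine since for $n \geq 1$ we have $e^{-\alpha^n t} \leq e^{-\alpha t} = e^{-t} \cdot e^{-(\alpha-1)t} \to 0$ relative to $e^{-t}$, and the coefficients $\prod_{j=1}^n(1-\alpha^j)^{-1}$ grow only like $\alpha^{-n(n+1)/2}$ times a bounded factor, so the series converges uniformly for $t$ bounded away from $0$; dominated convergence finishes it. I anticipate this is where the authors' proof will spend its effort, likely via a direct comparison $G(t) \leq C_\alpha e^{-t}$ for all $t \geq 0$ which is even cleaner than the asymptotic and suffices for the integrability conclusion.
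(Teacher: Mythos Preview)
Your argument contains a genuine error in the upper bound for $\P(L\ge t)$. You write that ``$L\ge t$ forces \emph{every} ray to accumulate total time at least $t$,'' but $L=\sup_{s\in\partial\T}\Theta_s$ is the \emph{supremum} over rays, so $[L\ge t]$ only says that \emph{some} ray has $\Theta_s$ near $t$. The implication you want---that the leftmost ray satisfies $\Theta_{(1,1,\ldots)}\ge t$---follows from $[S\ge t]$, not from $[L\ge t]$. In fact the inequality goes the other way: since $L\ge\Theta_{(1,1,\ldots)}\stackrel{d}{=}\tilde S$, one has $\P(L\ge t)\ge G(t)$, not $\le$. Your chain $e^{-t}\le\P(S\ge t)\le\P(L\ge t)\le G(t)$ is therefore broken at the last step, and what survives is only $e^{-t}\le\P(S\ge t)\le G(t)\le C_\alpha' e^{-t}$ together with $\P(L\ge t)\ge e^{-t}$. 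The upper bound $\P(L\ge t)\le Ce^{-t}$ is precisely the missing piece, and it cannot be obtained from a single ray.

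This is where the paper actually spends its effort, contrary to your anticipation. The authors bound $L$ stochastically by $\sum_{j\ge0}M_j$ with $M_j=\alpha^{-j}\max_{1\le i\le 2^j}T_i^{(j)}$, then choose thresholds $\delta_j(t)$ so that a union bound gives $\P(L\ge c_1 t+c_2)\le e^{-t}$ with $c_1=\alpha/(\alpha-1)$. This yields only the weaker rate $\P(L\ge t)=O(e^{-(\alpha-1)t/\alpha})$, and they then invoke an analytical trichotomy (\autoref{alphasharprmk}) for solutions of the $\alpha$-Riccati equation converging to $1$---applied to $u(t)=\P(L>t)$ and $u(t)=\P(S>t)$---to sharpen the exponent to $e^{-t}$. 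So the proof is two-stage: a crude combinatorial/probabilistic bound followed by an ODE bootstrap. Your single-ray idea gives the $S$ asymptotic cleanly (and that part of your argument is correct and more elementary than the paper's), but for $L$ you need either the paper's union-bound-plus-bootstrap or some other genuinely new argument.
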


\begin{proof} We will first show that $\alpha >1$, 
\begin{equation}\label{first_expl_asymptotics}
\P(L>t)\le \P(S>t)=O(e^{-\frac{\alpha-1}{\alpha}t})\qquad\mbox{as $t\to\infty$.} 
\end{equation}
Following the ideas in \cite{athreya} ,
let $M_j = \alpha^{-j}\max\{T_1^{(j)},\dots,T_{2^j}^{(j)}\}$, where $T_i^{(j)}, i=1,\dots,2^j, j=1,2,\dots$, 
are i.i.d. mean one exponentially distributed random variables. Then, 
\[L_n=\max_{|v|=n}\sum_{j=0}^n\alpha^{-j}T_{v|j}
\stackrel{dist}{\le}\sum_{j=0}^n M_j, 
\qquad L=\lim_nL_n\stackrel{dist}{\le}
\sum_{j=0}^\infty M_j.\]
Also, 
\begin{eqnarray}\label{Thetatail}
\pi_j = \P(M_j>\delta_j)&=& 1-\P(M_j\le\delta_j)\nonumber\\
 &=& 1-(1-e^{-\delta_j\alpha^j})^{2^j}\nonumber\\
 &\le& e^{j\ln 2-\delta_j\alpha^j}.
 \end{eqnarray}
Fix $t>0$.  Note that $\sum_{j=0}^\infty\pi_j\le e^{-t}$ for $\delta_j$ selected such that
$$e^{j\ln2-\delta_j\alpha^j}\le e^{-t}\frac{1}{c_0(j+1)^2}, \quad c_0=\frac{\pi^2}{6}>1,$$
i.e., for a choice of 
$$\delta_j\ge \delta_j(t)=
\frac{t+j\ln2+2\ln(j+1)+\ln c_0}{\alpha^j},\quad j=0,1,\dots.$$
Moreover,
$$\sum_{j=0}^\infty\delta_j(t) = \sum_{j=0}^\infty\alpha^{-j}t +\sum_{j=0}^\infty
\frac{j\ln2+2\ln(j+1)+\ln c_0}{\alpha^j} = c_1t+c_2,$$
where $c_1=\sum_{j=0}^\infty\alpha^{-j}=\frac{\alpha}{\alpha-1}$, and 
$c_2= \sum_{j=0}^\infty\frac{j\ln2+2\ln(j+1)+\ln c_0}{\alpha^j}$.

Note that $L\ge c_1t+c_2$ implies that for some $n$, $M_n>\delta_n(t)$. Thus,
\[
\P(L\ge c_1t+c_2)\le \sum_{j=0}^\infty \P(M_j>\delta_j(t))=e^{-t}.
\]
Setting $t^\prime=c_1t+c_2$, i.e.\ $t=(t^\prime-c_2)/c_1$, one has
\[
\P(L\ge t)\le e^{-\frac{t-c_2}{c_1}}, \quad \forall t\ge 0,
\] 
and \eqref{first_expl_asymptotics} easily follows. Then, \eqref{alphasharpdecay} follows 
by \autoref{alphasharprmk}, while the finiteness of expections of $L$ and $S$ follows from 
\eqref{alphasharpdecay}.
\end{proof}

In order to further understand the hyperexplosion of $\bY$, in addition to the total accumulated 
time $\Theta_s$on a ray $s\in\partial \T$, see \eqref{totaltime}, we introduce the 
{\em replacement time} corresponding to a vertex $v\in\T:$ 
 \begin{equation}\label{agedef}
 \Theta_v=\sum_{j=0}^{|v|}Y_{v|j},\   v\in\T,  \quad \Theta_{\overleftarrow{\theta}}=0\,.
 \end{equation}
Note that $v\in\,\stackrel{o}{V}(t)$ iff 
$v$ dies prior to time $t$, and $v\in \partial V(t)$ iff $v$ lives beyond time $t$, but
its parent $\overleftarrow{v}$ dies prior to $t$. Upon replacement, a vertex $v\in\T$ 
branches into two offspring $v1,v2$, respectively. So $\Theta_v$ is also
referred to as the {\it branching time} of $v$. One may say that \lq\lq $v\in \partial V(t)$ 
crosses $t$, while its parent $\overleftarrow{v}$ does not cross $t$\rq\rq.

The stochastic Picard iterations defined in the previous section naturally lead to the study of 
properties of the cumulative time along a ray $\Theta_s$, $s\in\partial\T$ defined in 
\eqref{totaltime}. For example, on explosive rays, $\Theta_s < t$ for $t$ big enough and thus 
the Picard iteration will require the evaluations of the ground state $X_0(\alpha^{|v|}(t-\Theta_v))$, 
$v\in s$.  Furthermore, along such an explosive path, further paths will branch, some of 
which might be explosive.  This structure results in a rich product of initial data and ground state 
evaluated at random times, the analysis of which requires a more delicate study of the properties 
of the explosion and hyperexplosion events for the subtrees of $V(t)$.



\begin{defin}\label{mhs}
Given a realization of $\{T_v\}_{v\in\T}$, a subtree $\T_v=\{v*w:\,w\in \T\}$ rooted at $v\in\T$ is said to be
\begin{itemize}
\item \emph{$t$-explosive} if 
$$S_v=\inf_{s\in\partial\T, v\in s}\sum_{j=|v|}^\infty\alpha^{-j}T_{s|j}\le t-\Theta_{\overleftarrow{v}},$$ 
and \emph{$t$-nonexplosive} otherwise,
\item \emph{$t$-hyperexplosive} if 
$$L_v=\sup_{s\in\partial\T, v\in s}\sum_{j=|v|}^\infty\alpha^{-j}T_{s|j}\le t-\Theta_{\overleftarrow{v}},$$ 
and \emph{$t$-nonhyperexplosive} otherwise,
\item a \emph{maximal t-hyperexplosive subtree} if it is $t$-hyperexplosive and if $\T_v$ is not contained in another subtree that is $t$-hyperexplosive.
\end{itemize}
In the above, for $v=(v_1,\ldots,v_k), w=(w_1,\ldots,w_l)\in\T$, $v*w=(v_1,\ldots,v_k, w_1,\ldots,w_l)$ denotes the concatenation of $v$ and $w$.
\end{defin}

Note that $S_v$ and $L_v$ have the same distribution as $\alpha^{|v|}S_\theta$ and $\alpha^{|v|}L_\theta$, respectively. The fine scale structure of the ASA cascade underlying the  $\alpha$-Riccati equation can be further delineated in showing that every $t$-explosive tree has a $t$-hyperexplosive subtree.

\begin{prop}\label{hypersubprop}
For $\alpha >1$, denote by $H_t=\cup_j\cup_{|v|=j} [L_v\le t-\Theta_{\overleftarrow{v}}]$ 
the event that there is a
$t$-hyperexplosive subtree.  
Then,
$\P(H_t | S\le t) = 1$
\end{prop}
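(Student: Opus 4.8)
The plan is to prove the slightly stronger assertion that $[S<t]\subseteq H_t$ almost surely. Since $\P(S=t)=0$ by the continuity of the distribution of $S$ (\autoref{contdist}) and $\P(S\le t)>0$ for $t>0$ and $\alpha>1$ (e.g.\ because $S\le\Theta_{(1,1,\dots)}$, a convergent sum of independent exponentials, which takes arbitrarily small positive values with positive probability), this immediately yields $\P(H_t\mid S\le t)=1$. The only quantitative input I need is the tail bound $\P(L>x)\le Ce^{-cx}$ for suitable $C,c>0$, i.e.\ exactly \eqref{first_expl_asymptotics} from the proof of \autoref{expl_asymptotics}.

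The key step is the uniform smallness of deep subtrees:
\[
\max_{|w|=j}L_w\;\xrightarrow[j\to\infty]{}\;0\qquad\text{almost surely.}
\]
By self-similarity $\alpha^{|w|}L_w$ is distributed as $L$, so for every $\epsilon>0$,
\[
\P\Bigl(\max_{|w|=j}L_w>\epsilon\Bigr)\le\sum_{|w|=j}\P\bigl(L>\alpha^{j}\epsilon\bigr)\le 2^{j}Ce^{-c\alpha^{j}\epsilon},
\]
which is summable in $j$ because $\alpha^{j}$ eventually dominates $j\ln 2$; Borel--Cantelli, intersected over rational $\epsilon$, gives the claim. (Equivalently, the same estimate shows $\sum_{i\le N}\alpha^{-i}T_{s|i}\to\Theta_s$ uniformly over $s\in\partial\T$, so $s\mapsto\Theta_s$ is a.s.\ continuous on the compact set $\partial\T$; either formulation works below.)

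Now fix $t>0$ and argue pointwise on the almost sure event $\bigl\{\max_{|w|=j}L_w\to0\bigr\}\cap[S<t]$. Since $S=\inf_{s\in\partial\T}\Theta_s<t$ by \eqref{rayexpleq}, choose $\delta>0$ with $S<t-\delta$ and a ray $s_0$ with $\Theta_{s_0}<t-\delta$; note that no minimizing ray is needed, only the definition of the infimum. Because the summands are nonnegative, $\Theta_{s_0|(j-1)}=\sum_{i=0}^{j-1}\alpha^{-i}T_{s_0|i}\le\Theta_{s_0}<t-\delta$ for every $j\ge1$. Pick $j_0$ with $\max_{|w|=j_0}L_w\le\delta$; then in particular $L_{s_0|j_0}\le\delta$, and since $\overleftarrow{s_0|j_0}=s_0|(j_0-1)$,
\[
L_{s_0|j_0}\le\delta<t-\Theta_{s_0|(j_0-1)}=t-\Theta_{\overleftarrow{s_0|j_0}}.
\]
Thus $\T_{s_0|j_0}$ is $t$-hyperexplosive in the sense of \autoref{mhs}, so this $\omega$ lies in $H_t$. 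Hence $[S<t]\subseteq H_t$ a.s., and the conclusion follows as above.

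The single delicate point is that the ray $s_0$ and the level $j_0$ depend on $\omega$, and $s_0$ is not a ``fresh'' part of the tree, so one cannot run Borel--Cantelli along a single pre-chosen ray. This is precisely why the argument is routed through $\max_{|w|=j}L_w\to0$, which is uniform over all $2^{j}$ level-$j$ subtrees simultaneously: it \emph{locates} a $t$-hyperexplosive subtree hanging below a short ray without ever having to follow a particular, data-dependent, ray. I expect this decoupling to be the main conceptual hurdle; the rest is bookkeeping against \autoref{mhs}.
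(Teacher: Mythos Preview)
Your proof is correct. Both you and the paper use the same quantitative input, namely the exponential tail $\P(L>x)\le Ce^{-cx}$ together with a union bound over the $2^j$ subtrees at level $j$, and both reduce to showing $\P(H_t^c\cap[S<t])=0$. The difference is in packaging: the paper works directly with probabilities, covering $[S<t]$ by $\cup_{n\ge N}\cup_{|v|=K_n}[\Theta_{\overleftarrow{v}}\le t-1/n]$, intersecting with $H_t^c\subset\cap_{|v|=K_n}[L_v>t-\Theta_{\overleftarrow{v}}]$, and then choosing the levels $K_n$ large enough that $\P(L_v>1/n)\le 3^{-K_n}$ so the double sum is bounded by $3(2/3)^N$. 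You instead distill the estimate into the almost-sure lemma $\max_{|w|=j}L_w\to0$ via Borel--Cantelli and then argue pathwise: pick a ray with $\Theta_{s_0}<t-\delta$ and a level $j_0$ where all subtree hyperexplosion times are below $\delta$. Your route is a little cleaner and the lemma is reusable (indeed, the paper implicitly re-derives a variant of it in the proofs of \autoref{finitemaximal} and \autoref{infleaves}); the paper's route avoids the intersection over rational $\epsilon$ and keeps everything in one estimate. Substantively they are the same idea expressed at different levels: your Borel--Cantelli step is exactly the paper's summability of $2^{K_n}\P(L>\alpha^{K_n}/n)$, relocated from a probability bound to an almost-sure statement.
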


\begin{proof} 
The total accumulation time along a ray $s\in\partial\T$ and the replacement time at $v\in\T$ are 
\begin{equation}
\Theta_s=\sum_{j=0}^\infty\alpha^{-j}T_{s|j},\  s\in\partial\T,\qquad
\Theta_v=\sum_{j=0}^{|v|}\alpha^{-j}T_{v|j},\  v\in\T.
\end{equation}
Fix $t>0$. 
Since $S$ has a continuous distribution (\autoref{contdist}), the proposition follows once we show that $\P(H_t^c \cap [S < t]) = 0$.

Note that $H_t= \cup_j\cup_{|v|=K_j} [L_v \le t-\Theta_{\overleftarrow{v}}]$ where $K_j, j=1,\dots$ is an arbitrary increasing sequence of positive integers.   Indeed if $v\in\T$ and $L_v \le t-\Theta_{\overleftarrow{v}}$,  let $j$ be such that $K_{j-1} < |v| \le K_j$.  Recall that  $v\in w$ means $w|_{|v|} = v$. Thus one has that $L_w \le t - \Theta_{\overleftarrow{w}}$, $\forall w\in\T, |w|=K_j, v\in w$ .   Consequently, for any $n$, 
\begin{equation}
\label{nohyperexplosion}
H_t^c = \bigcap_{j=1}^\infty \bigcap_{|v|=K_j} [L_v > t-\Theta_{\overleftarrow{v}} ]
 \subset \bigcap_{|v|=K_n} [L_v > t-\Theta_{\overleftarrow{v}}]
 \end{equation}
 
Likewise, one can describe the event $[S < t]$ using only vertices of $\T$ of length $K_n$.  Indeed for any $N\ge 1$,
$$
[S < t] \subset\bigcup_{n\ge N}\bigcup_{|v|=K_n}\left(\cup_{s\in\partial\T, v\in s}
[\Theta_s \le t-\frac{1}{n}]\right),
$$
and since for $v\in s$, $[\Theta_s \le t-\frac{1}{n}] \subset [\Theta_{\overleftarrow{v}} \le t -\frac{1}{n}]$, one has
\begin{equation}
\label{explosion}
[S < t] \subset \bigcup_{n\ge N}\bigcup_{|v|=K_n} [\Theta_{\overleftarrow{v}} \le t -\frac{1}{n}].
\end{equation}
Using \eqref{nohyperexplosion} and \eqref{explosion}
$$
H_t^c \cap [S < t] \subset  \bigcup_{n\ge N}\bigcup_{|v|=K_n} ( [L_v > t-\Theta_{\overleftarrow{v}}]\cap  [\Theta_{\overleftarrow{v}} \le  t -\frac{1}{n}])
$$
We now choose $K_n$ to complete the proof.  Note that 
\begin{equation}
\label{inclusion}
([L_v > t-\Theta_{\overleftarrow{v}}]\cap  [\Theta_{\overleftarrow{v}} \le t -\frac{1}{n}] )\subset [L_v > \frac{1}{n}].
\end{equation}  Recalling that in distribution $L_v = \frac{ L_\theta}{\alpha^{|v|}}$ one has
$$
\P\left(L_v > \frac{1}{n}\right)
= \P\left(L_\theta > \frac{\alpha^{|v|}}{n}\right)\le c\exp\left(-\frac{\alpha^{|v|}}{n}\right)
$$
since by \autoref{expl_asymptotics}, $\P(L_\theta > r)\le ce^{-r}, r\ge 0$, for some constant $c>0$.
Now, choose $K_n>n$, such that 
$$ c\exp\left(-\frac{\alpha^m}{n}\right)< 3^{-m}, \quad \forall m\ge K_n.$$
This is possible since for every $n\ge 1$, $3^m\exp(-\frac{\alpha^m}{n})=o(1)$ as $m\to\infty$. 
Thus, by \eqref{inclusion} we have 
\[\P\left([L_v>t-\Theta_{\overleftarrow{v}}]\cap\left[t-\Theta_{\overleftarrow{v}}{{\ge}}\frac{1}{n}\right]\right)\le 3^{-|v|}\quad \mbox{for all $v\in\T$ with $|v|=K_n$}\]
and 
\begin{eqnarray}
\P(H_t^c\cap[S <t]) &\le& \sum_{n\ge N}\sum_{|v|=K_n}
\P\left([L_v>t-\Theta_{\overleftarrow{v}}]\cap\left[t-\Theta_{\overleftarrow{v}}{{\ge}}\frac{1}{n}\right]\right) \nonumber\\
&\le& \sum_{n\ge N}
\sum_{|v|=K_n}3^{-|v|}
=  \sum_{n\ge N}2^{K_n}3^{-K_n}\nonumber
\le  \sum_{n\ge N}\left(\frac{2}{3}\right)^n=3\left(\frac{2}{3}\right)^N.
\end{eqnarray}
Thus, letting $N\to\infty$,
$\P(H_t^c\cap[S < t]) = 0$ as claimed.
\end{proof}

The existence property of maximal hyperexplosive subtrees can be refined as follows.

\begin{prop}\label{finitemaximal}
For $t>0$, let $W(t)$ be the event that there are only finitely many maximal $t$-hyperexplosive subtrees. The following are true:
\begin{enumerate}[(i)]
\item $[|\partial V(t)|<\infty]\subset W(t)$ and $W(t)\backslash[|\partial V(t)|<\infty]$ is a null event.
\item On $[|\partial V(t)|<\infty]$, let $\T_{v_1}$, $\T_{v_2}$,..., $\T_{v_n}$ be the maximal $t$-hyperexplosive subtrees. Then there are only finitely many vertices $v\in \T\backslash(\T_{v_1}\cup...\cup \T_{v_n})$ such that $\theta_v<t$.
\end{enumerate}
\end{prop}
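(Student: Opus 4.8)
The plan is to reduce both parts to combinatorial statements about the (random) subtree $\stackrel{o}{V}(t)=\{v\in\T:\Theta_v<t\}$ of internal vertices, working throughout on the full-measure event on which every $T_v>0$ and on which the conclusion of \autoref{hypersubprop}, applied to the rescaled subcascade rooted at each $v\in\T$, holds simultaneously; concretely this gives that for every $v\in\T$, if $\T_v$ is $t$-explosive then $\T_v$ contains a $t$-hyperexplosive subtree. First I would record a few structural facts. Because $\Theta$ is nondecreasing along rays, $\stackrel{o}{V}(t)$ is a rooted subtree whose external boundary in $\T$ is exactly $\partial V(t)$, so that $\stackrel{o}{V}(t)=\T\setminus\bigcup_{b\in\partial V(t)}\T_b$; moreover $\stackrel{o}{V}(t)$ is infinite iff it contains an infinite ray iff $S\le t$ (by K\"onig's lemma, the binary tree being locally finite). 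A subtree $\T_v$ is $t$-hyperexplosive iff $\sup_{w\in\T_v}\Theta_w=\Theta_{\overleftarrow v}+L_v\le t$; since a.s.\ $\Theta_w<\Theta_{\overleftarrow v}+L_v$ for every vertex $w\in\T_v$, a $t$-hyperexplosive $\T_v$ satisfies $\T_v\subseteq\stackrel{o}{V}(t)$, and conversely $\T_v\subseteq\stackrel{o}{V}(t)$ forces $\T_v$ to be $t$-hyperexplosive. Finally, distinct maximal $t$-hyperexplosive subtrees are disjoint (if two meet, one contains the other), and the root of any $t$-hyperexplosive subtree lies in $\stackrel{o}{V}(t)$.

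For the inclusion $[|\partial V(t)|<\infty]\subseteq W(t)$: on that event let $F\subset\T$ be the finite minimal subtree containing $\theta$ and all $t$-leaves, and set $F^{o}=F\cap\stackrel{o}{V}(t)$. If $v\in\stackrel{o}{V}(t)\setminus F$, let $u$ be the longest prefix of $v$ lying in $F$ and $u'$ the child of $u$ towards $v$; then $u$ is internal and $u'\notin F$, and $\T_{u'}$ meets no $\T_b$ (a prefix relation either way would place $u'$ on a path $\theta\to b$, forcing $u'\in F$, or give $\Theta_b\le\Theta_v<t$, contradicting that $b$ is a $t$-leaf), so $\T_{u'}\subseteq\stackrel{o}{V}(t)$ is $t$-hyperexplosive. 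Hence $\stackrel{o}{V}(t)$ is contained in the union of the finite set $F^{o}$ with the finitely many $t$-hyperexplosive subtrees $\T_{u'}$ (at most two per vertex of $F$). Since the root of any maximal $t$-hyperexplosive subtree lies in $\stackrel{o}{V}(t)$, it either belongs to the finite set $F^{o}$ or lies in some $\T_{u'}$, in which case maximality forces that subtree to equal $\T_{u'}$; either way there are only finitely many maximal $t$-hyperexplosive subtrees, so $W(t)$ occurs.

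Both remaining assertions follow from the key lemma: \emph{if there are only finitely many maximal $t$-hyperexplosive subtrees $\T_{v_1},\dots,\T_{v_n}$, then $B:=\stackrel{o}{V}(t)\setminus\bigcup_{i=1}^{n}\T_{v_i}$ is finite}. Here $B$ is again a rooted subtree, so if $B$ were infinite K\"onig's lemma would yield an infinite ray $s^{*}\subset B$; then $\Theta_{s^{*}}\le t$, so every $\T_{s^{*}|k}$ is $t$-explosive and therefore contains a $t$-hyperexplosive subtree, which must lie inside some $\T_{v_{i(k)}}$. As $\T_{s^{*}|k}$ and $\T_{v_{i(k)}}$ intersect while $s^{*}|k\notin\bigcup_i\T_{v_i}$, the vertex $s^{*}|k$ must be a strict prefix of $v_{i(k)}$; but $s^{*}|k$ can be a strict ancestor of some $v_j$ only when $k<\max_j|v_j|$, which is impossible for large $k$. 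Granting the lemma: if $W(t)$ occurs then $\stackrel{o}{V}(t)=B\sqcup\bigcup_i\T_{v_i}$ with $B$ finite, and every boundary vertex of $\stackrel{o}{V}(t)$ is a child of a vertex of $B$, so $|\partial V(t)|\le 2|B|<\infty$; hence $W(t)\setminus[|\partial V(t)|<\infty]$ is null. For (ii), on $[|\partial V(t)|<\infty]$ there are finitely many maximal $t$-hyperexplosive subtrees by the first part, and then the lemma states exactly that $\{v\in\T\setminus\bigcup_i\T_{v_i}:\Theta_v<t\}=B$ is finite.

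The step I expect to be the main obstacle is the key lemma, i.e.\ excluding an infinite ``residual'' subtree $B$ of internal vertices that avoids all the maximal $t$-hyperexplosive subtrees. The subtlety is that $B$ may branch around those subtrees, so one cannot argue vertex by vertex that the subtrees rooted in $B$ are $t$-nonexplosive; the remedy is to pass to an infinite ray of $B$ via K\"onig's lemma, apply \autoref{hypersubprop} to the (rescaled) subcascades rooted along that ray, and then invoke the elementary observation that a fixed finite set $\{v_1,\dots,v_n\}$ has only finitely many strict ancestors lying on the ray. A secondary, routine point is the careful transfer of \autoref{hypersubprop} to an arbitrary subtree together with the bookkeeping of the countably many null sets so that all assertions hold on one full-measure event.
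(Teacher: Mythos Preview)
Your proof is correct and takes a somewhat different route from the paper's, so a brief comparison is in order.

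For the inclusion $[|\partial V(t)|<\infty]\subseteq W(t)$, the paper argues by contradiction via a \emph{sibling argument}: if there were infinitely many maximal $t$-hyperexplosive subtrees $\T_{v_j}$, then each sibling $v_j'$ has a non-hyperexplosive subtree, hence lies on the path from $\theta$ to some $t$-leaf, placing infinitely many distinct $v_j'$ in the finite set $A=\{v|k:v\in\partial V(t),\,0\le k\le|v|\}$. Your decomposition argument instead shows directly that $\stackrel{o}{V}(t)$ is contained in the finite set $F^{o}$ together with the finitely many hyperexplosive subtrees rooted at the ``exit points'' $u'$ from $F$, and then that any maximal hyperexplosive root must be one of those $u'$. (In fact your case ``$r\in F^{o}$'' is vacuous: every $r\in F^{o}=F\cap\stackrel{o}{V}(t)$ is an ancestor of some $t$-leaf, so $\T_r\not\subset\stackrel{o}{V}(t)$ and $\T_r$ cannot be hyperexplosive.)

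For the converse in (i) and for (ii), the paper works at a fixed level $h>\max_j|v_j|$: it shows that each subtree $\T_v$ with $|v|=h$ and $v\notin\bigcup_j\T_{v_j}$ is $t$-nonexplosive (else it would contain a hyperexplosive subtree, hence a maximal one, contradicting $v\notin\bigcup_j\T_{v_j}$), and then counts $t$-leaves level by level. Your key lemma replaces this with a K\"onig argument on the residual subtree $B$: an infinite $B$ would contain an infinite ray, along which Prop.~\ref{hypersubprop} (transferred to subtrees, exactly as the paper does via the null set $D=\bigcup_v D_v$) forces each $\T_{s^*|k}$ to contain some $\T_{v_{i(k)}}$; but $s^*|k$ can be a strict ancestor of one of the finitely many $v_j$ only for $k<\max_j|v_j|$. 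This is a clean unification of the two remaining claims, and the deduction $|\partial V(t)|\le 2|B|$ from ``every $t$-leaf is a child of a vertex of $B$'' is a nice way to close.

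Both approaches rest on the same two structural inputs: the transfer of Prop.~\ref{hypersubprop} to all rooted subtrees simultaneously (a countable union of null sets), and the fact that $t$-hyperexplosive is equivalent to $\T_v\subset\stackrel{o}{V}(t)$. The paper's route is slightly more constructive; yours is more uniform and makes the role of K\"onig's lemma explicit.
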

\begin{proof}
(i) First, we show that $[|\partial V(t)|<\infty]\subset W(t)$. On the event $[|\partial V(t)|<\infty]$, the set \[A=\{v|j:\ 0\le j\le |v|,\,v\in\partial V(t)\}\] is finite. Suppose by contradiction that there are infinitely many maximal hyperexplosive subtrees with respect to horizon $t$. One can label them by $\T_{v_1}$, $\T_{v_2}$,$\T_{v_3}$,... where $v_1,v_2,v_3,...$ are distinct vertices. Let $v_j'$ be the sibling of $v_j$. Note that $v'_1,v'_2,v'_3,...$ are also distinct vertices. We claim that the subtree $\T_{v'_j}$ is not hyperexplosive with respect to $t$. Indeed, suppose that it is. Then the subtree $\T_{\overleftarrow{v_j}}$, where $\overleftarrow{v_j}$ is the parent of $v_j$, is hyperexplosive with respect to $t$, which contradicts the maximality of $\T_{v_j}$.
%
Since $\T_{v'_j}$ is not hyperexplosive with respect to $t$, there exists a path $s_j\in\partial\T$ passing through $v'_j$ that crosses the $t$-horizon, i.e.
$\sum_{i=0}^\infty{\alpha^{-i}T_{s_j|i}}>t$. Each path $s_j$ must contain one of the vertices  in $\partial V(t)$. Thus, $v'_j\in A$ for all $j\in \mathbb{N}$. This contradices the fact that $A$ is a finite set.

Next, we show that $W(t)\backslash [|\partial V(t)|<\infty]$ is a null event. For each $v\in\T$, denote by $D_v$ the event that the subtree $\T_v$ is $t$-explosive and does not contain any $t$-hyperexplosive subtree. In other words,
\[D_v=H^c_{v,t}\cap[S_v\le t-\Theta_{\overleftarrow{v}}]\]
where $H_{v,t}=\cup_{w\in\T_v}[L_w\le t-\Theta_{\overleftarrow{w}}]$. Note that the cascade $\{T_w\}_{w\in\T_v}$ has the same distribution as the Yule cascade $\{T_v\}_{v\in\T}$. By \autoref{hypersubprop}, $\P(D_v|\Theta_{\overleftarrow{v}})=0$. Thus,
\[\P(D_v)=\EXP[\P(D_v|\Theta_{\overleftarrow{v}})]=0.\]
Let $D=\cup_{v\in\T}D_v$, which is also a null event because $\T$ is a countable set. The proof will be complete if we can show that $W(t)\backslash D\subset [|\partial V(t)|<\infty]$.

On the event $W(t)\backslash D$, let $\T_{v_1}$, $\T_{v_2}$,..., $\T_{v_n}$ be the maximal $t$-hyperexplosive subtrees. Let $h=\max\{|v_1|,..,|v_n|\}+1$ and
\begin{eqnarray*}
A'&=&\left\{v\in\T:\,|v|=h,\ v\not\in\bigcup_{j=1}^n \T_{v_j}\right\},\\
A_1&=&\bigcup_{v\in A',\,\Theta_v> t} \{v|j:\,0\le j\le |v|\},\\
A_2&=&\bigcup_{v\in A',\,\Theta_v\le t}\partial V_v(t-\Theta_v),\\
A''&=&A_1\cup A_2
\end{eqnarray*}
where $\partial V_v(\tau)$ denotes the set of $\tau$-leaves of the subtree $\T_v$ 
\begin{equation*}
\partial{V}_v(\tau) =\left\{w\in\T_v: \sum_{j=0}^{|w|-1}\alpha^{-j}T_{w|j}<\tau\le\sum_{j=0}^{|w|}\alpha^{-j}T_{w|j}\right\}.
 \end{equation*}
We claim that $\partial V(t)\subset A''$. Indeed, for each $w\in\partial V(t)$, note that $w$ cannot be a descendant of any of $v_1,v_2,...,v_n$ because $\Theta_w>t$. If $|w|<h$ then $w\in A_1$. If $|w|\ge h$ then $w|h\in A'$ and $w\in A_2$. The claim has been proven. 

For each $v\in A'$ with $\Theta_v\le t$, we claim that $\T_v$ is not $(t-\Theta_{\overleftarrow{v}})$-explosive. Suppose otherwise. Then it has a $(t-\Theta_{\overleftarrow{v}})$-hyperexplosive subtree. This contradicts the fact that $v\in A'$. This means $\T_v$ is not $(t-\Theta_{\overleftarrow{v}})$-explosive. Thus, the set $\partial V_v(t-\Theta_v)$ is finite. Therefore, $A''$ is also finite and so it is $\partial V(t)$.

%

(ii) On $[|\partial V(t)|<\infty]$, let $A$ be the set defined in Part (i). Each vertex $w\in \T\backslash(\T_{v_1}\cup...\cup \T_{v_n})$ with $\Theta_w<t$ is not the root of a $t$-hyperexplosive subtree. Hence, there exists a path $s_w\in \T$ that contains $w$ and crosses the horizon $t$. Then $w\in A$. Since $A$ is finite, there can be only finitely many such vertices $w$.
\end{proof}
The following proposition characterizes the event $[|\partial V(t)|=\infty]$ which, by \autoref{alphexplosion}, can only occur for $1<\alpha<2.$
\begin{prop}\label{infleaves}
For $t>0$, let 
\[\overline{W}(t)=[\exists s\in\partial\T:\ \Theta_s=t]\]
where $\Theta_s=\sum_{j=0}^\infty \alpha^{-j}T_{s|j}$. Then $\overline{W}(t)\,\Delta\,[|\partial V(t)|=\infty]$ is a null event. Here, $A\,\Delta\, B=(A\backslash B)\cup (B\backslash A)$ denotes the symmetric difference between two events $A$ and $B$.
\end{prop}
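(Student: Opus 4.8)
The plan is to establish the two inclusions (up to null events) $\overline{W}(t)\setminus[|\partial V(t)|=\infty]$ is null and $[|\partial V(t)|=\infty]\setminus\overline{W}(t)$ is null separately, exploiting the structural machinery of \autoref{finitemaximal} together with the fact that $S$ and $L$ have continuous distributions (\autoref{contdist}).

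First I would show $[|\partial V(t)|=\infty]\setminus\overline{W}(t)$ is null, i.e.\ that on the event that no ray accumulates \emph{exactly} time $t$, we have $|\partial V(t)|<\infty$ a.s. Here the idea is to argue the contrapositive of the reasoning in \autoref{finitemaximal}(i): if $|\partial V(t)|=\infty$, then either there are infinitely many maximal $t$-hyperexplosive subtrees — which \autoref{finitemaximal}(i) rules out on a co-null event since $W(t)^c$ is null — or, with finitely many such subtrees $\T_{v_1},\dots,\T_{v_n}$ and the argument of \autoref{finitemaximal}(i), the infinitude of $\partial V(t)$ forces the set $A''$ constructed there to be infinite. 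Tracing through that construction, $A''$ can only be infinite if some $\T_v$ with $v\in A'$, $\Theta_v\le t$ is $(t-\Theta_{\overleftarrow v})$-explosive — but then by \autoref{hypersubprop} it contains a $t$-hyperexplosive subtree, contradicting $v\in A'$ — \emph{unless} the obstruction is a boundary case where $\Theta_v = t$ for some vertex, equivalently $\Theta_s = t$ for some ray through $v$. So outside $\overline{W}(t)$ (and outside the null exceptional events from \autoref{hypersubprop} and \autoref{finitemaximal}), $\partial V(t)$ is finite. This is the direction I expect to be the main obstacle: one must carefully re-examine where the finiteness arguments of \autoref{finitemaximal} actually used strict inequalities versus weak ones, and isolate the exact ``$\Theta_s=t$'' boundary phenomenon as the only way finiteness can fail.

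Next I would show $\overline{W}(t)\setminus[|\partial V(t)|=\infty]$ is null, i.e.\ if some ray $s^\ast$ satisfies $\Theta_{s^\ast}=t$ then $|\partial V(t)|=\infty$ a.s. The point is that along $s^\ast$, every vertex $v=s^\ast|k$ has replacement time $\Theta_v < t$ (strictly, since the tail $\sum_{j>k}\alpha^{-j}T_{s^\ast|j}>0$ a.s.), so each such $v$ dies before time $t$ and branches; the sibling $v'$ of each such $v$ has $\Theta_{\overleftarrow v}<t$, so $v'$ is a vertex whose parent has already branched by time $t$. Either $v'\in\partial V(t)$ (it survives past $t$), or $\T_{v'}$ has itself been entered, producing further $t$-leaves inside it. In the first case we immediately get infinitely many distinct $t$-leaves $v'_1, v'_2,\dots$ along $s^\ast$. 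In the second case, each $\T_{v'_k}$ contributes at least one $t$-leaf, and these come from disjoint subtrees, again giving infinitely many. A short argument (or an appeal to $\P(\Theta_s = t \text{ for a given } s)=0$ combined with a Borel–Cantelli-type bookkeeping, noting $\overline{W}(t)$ itself has positive probability only for $1<\alpha<2$) closes the case; the key input is that $S_{v'}$ and $L_{v'}$ have continuous distributions, so we avoid further degenerate coincidences on a co-null set.

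Finally I would assemble: $\overline{W}(t)\,\Delta\,[|\partial V(t)|=\infty]$ is contained in the union of the two null sets above together with the exceptional null events $W(t)^c$, $D$ (from \autoref{finitemaximal}(i)), and the null events from \autoref{hypersubprop}, hence is null. I would remark that for $\alpha\in[0,1]\cup[2,\infty)$ both events are a.s.\ empty by \autoref{alphexplosion}, so the content is entirely in the regime $1<\alpha<2$, consistent with \autoref{alphacrossingsrmk}.
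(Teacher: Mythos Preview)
There is a genuine error in your first direction. You claim that \autoref{finitemaximal}(i) makes $W(t)^c$ null, but that proposition says something different: it shows $W(t)$ and $[|\partial V(t)|<\infty]$ agree up to a null set. Hence on $[|\partial V(t)|=\infty]$ one \emph{almost surely has} infinitely many maximal $t$-hyperexplosive subtrees --- precisely the case you dismiss. Your ``case (b)'' discussion (finitely many maximal subtrees, re-running the $A''$ construction) is therefore analyzing a null event and does not touch the real content. The paper's argument for this direction is quite different: it introduces the auxiliary event $A=[\forall\epsilon>0,\ \exists s\in\partial\T:\ t-\epsilon<\Theta_s\le t]$, proves $A\subset\overline{W}(t)$ via the compactness-style \autoref{suppath} (constructing a ray achieving the supremum), and then shows $[|\partial V(t)|=\infty]\setminus A$ is null by a quantitative Borel--Cantelli estimate: outside $A$ there is a gap of size $\epsilon$ below $t$, the set $\mathcal V$ of vertices lying on both a $\le t$ ray and a $>t$ ray is infinite, and each such vertex satisfies $L_v\ge\epsilon$, which the exponential tail bound of \autoref{expl_asymptotics} makes summable.

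Your second direction also has a gap. The assertion that each sibling subtree $\T_{v'_k}$ ``contributes at least one $t$-leaf'' is unjustified: if $\T_{v'_k}$ happens to be $t$-hyperexplosive then every ray through $v'_k$ has total time strictly below $t$, so $\T_{v'_k}$ contributes \emph{no} $t$-leaves, and nothing in your sketch rules out this happening for all large $k$. The paper closes this by a different mechanism: after excluding the null events $D$ (from the proof of \autoref{finitemaximal}) and $D'=\bigcup_v[L_v=t-\Theta_v]$, it observes that each subtree $\T_{s^\ast|j}$ along the critical ray is $t$-explosive but strictly not $t$-hyperexplosive, hence by \autoref{hypersubprop} contains a maximal $t$-hyperexplosive subtree $\Pi_j$ properly inside it. This manufactures infinitely many maximal $t$-hyperexplosive subtrees of $\T$, and then \autoref{finitemaximal}(i) forces $|\partial V(t)|=\infty$.
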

The proof of \autoref{infleaves} requires the following lemma.
\begin{lem}\label{suppath}
Let $t>0$. Suppose that for each $\epsilon>0$, there exists a random path $s=s_{\epsilon}\in\partial\T$ such that $t-\epsilon<\Theta_s\le t$. Then there exists a random path $\sigma\in\partial\T$ such that $\Theta_\sigma=t$.
\end{lem}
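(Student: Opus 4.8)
The plan is a two-step argument: first build a candidate ray $\sigma$ by a greedy (K\"onig-type) descent through the binary tree, then verify $\Theta_\sigma = t$ by a squeeze in which the nontrivial direction, $\Theta_\sigma \ge t$, is forced by a tail estimate on the accumulated times deep in the tree. I expect this last direction to be the main obstacle: the map $s \mapsto \Theta_s$ is only lower semicontinuous on the compact space $\partial\T$, so a bare compactness/limit argument recovers only $\Theta_\sigma \le t$ and can lose mass in the limit; the tail estimate is exactly what rules that out.

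First I would dispose of the case $\alpha \le 1$: then $\alpha^{-j} \ge 1$ for all $j$, so a.s.\ $S = \inf_{s}\Theta_s = \infty$ by \autoref{alphexplosion}, the hypothesis holds only on a null event, and there is nothing to prove; so assume $\alpha > 1$. Call a vertex $v \in \T$ \emph{good} if for every $\epsilon > 0$ there is a ray $s \ni v$ with $t - \epsilon < \Theta_s \le t$. The hypothesis says $\theta$ is good, and a good vertex always has a good child: if neither $v1$ nor $v2$ were good, their failure radii $\epsilon_1,\epsilon_2 > 0$ would combine via $\epsilon := \min(\epsilon_1,\epsilon_2)$ to show $v$ is not good, since every ray through $v$ passes through $v1$ or $v2$. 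Descending from $\theta$ always into the good child of least index yields a ray $\sigma$ with every truncation $\sigma|n$ good; one may take $\sigma$ measurable in $\{T_v\}_{v\in\T}$ by a standard measurable-selection argument using joint measurability of $(\omega,s)\mapsto\Theta_s(\omega)$. Since a good vertex $v$ lies on a ray $s$ with $t \ge \Theta_s \ge \Theta_v$, we get $\Theta_{\sigma|n} \le t$ for all $n$, hence $\Theta_\sigma = \lim_n \Theta_{\sigma|n} \le t$.

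For the reverse inequality, fix $n$ and apply goodness of $\sigma|n$ with $\epsilon = 1/n$ to get a ray $s^{(n)} \ni \sigma|n$ with $\Theta_{s^{(n)}} > t - 1/n$; since $s^{(n)}$ and $\sigma$ agree up to level $n$, $\Theta_{s^{(n)}} = \Theta_{\sigma|n} + R_n$ with $R_n = \sum_{j>n}\alpha^{-j}T_{s^{(n)}|j}$, so $\Theta_{\sigma|n} \ge t - 1/n - R_n$. It remains to show $R_n \to 0$ a.s. I would bound $R_n \le L^{(n)} := \sup_{s\in\partial\T}\sum_{j>n}\alpha^{-j}T_{s|j} = \max_{|w|=n+1}L_w$ with $L_w$ as in \autoref{mhs}; then, using $L_w \stackrel{d}{=} \alpha^{-(n+1)}L_\theta$ together with $\P(L_\theta > r) \le c\,e^{-r}$ from \autoref{expl_asymptotics}, a union bound gives $\P(L^{(n)} > \eta) \le c\,2^{n+1}e^{-\eta\alpha^{n+1}}$, which is summable in $n$ because $\alpha > 1$; Borel--Cantelli (letting $\eta \downarrow 0$ along a sequence) then gives $L^{(n)} \to 0$ a.s. Passing to the limit in $\Theta_{\sigma|n} \ge t - 1/n - L^{(n)}$ yields $\Theta_\sigma \ge t$, and combined with the previous step, $\Theta_\sigma = t$.
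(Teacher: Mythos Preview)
Your proof is correct and follows essentially the same greedy K\"onig-type descent as the paper's: where you go to a ``good'' child, the paper goes to the child maximizing the conditional supremum $\kappa^w$, which encodes the same invariant (the sup of $\Theta_s$ over rays through that child with $\Theta_s\le t$ equals $t$). Your argument is in fact more complete than the paper's, which stops after constructing $\sigma$ and does not verify $\Theta_\sigma=t$; the tail bound $\max_{|w|=n+1}L_w\to 0$ a.s.\ that you extract from \autoref{expl_asymptotics} via a union bound and Borel--Cantelli is exactly what is needed to secure $\Theta_\sigma\ge t$ (lower semicontinuity of $s\mapsto\Theta_s$ on the compact space $\partial\T$ yields only $\Theta_\sigma\le t$), and this step is absent from the paper's proof.
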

\begin{proof}
Let $\kappa=\sup\{\Theta_s:\ s\in\partial\T,\ \Theta_s\le t\}$. Because $t-\epsilon<\Theta_{s_\epsilon}\le t$ for each $\epsilon>0$, we have $\kappa=t$. We construct recursively a random path $\sigma\in\partial\T$ as follows. Annex the root $\theta$ to $\sigma$. It is the starting vertex of $\sigma$. If $v$ is the most recently annexed vertex on $\sigma$, we annex the next vertex to $\sigma$ as follows. For $s\in\partial\T$ and $w\in\T$, denote
\[\Theta^w_s=\sum_{j=0}^\infty \alpha^{-j-|w|}T_{w*s|j},\ \ \ \kappa^w=\sup\{\Theta^w_s:\ s\in\partial\T,\ \Theta^w_s\le t-\Theta_w\}.\]
If $\kappa^{v*1}\ge \kappa^{v*2}$ then we annex $v*1$ to $\sigma$. Otherwise, we annex $v*2$ to $\sigma$.
\end{proof}
\begin{proof}[Proof of \autoref{infleaves}]
First, let us show that $[|\partial V(t)|=\infty]\backslash \overline{W}(t)$ is a null event. Let 
\[A=[\forall \epsilon>0,\ \exists s\in\partial\T:\ t-\epsilon<\Theta_s\le t]\]
By \autoref{suppath}, $A\subset \overline{W}(t)$. It is sufficient to show that $[|\partial V(t)|=\infty]\backslash A$ is a null event. On this event, there exists $\epsilon>0$ such that for every $s\in\partial\T$, either $\Theta_s\le t-\epsilon$ or $\Theta_s>t$. Let
\[\mathcal{V}=\{v\in\T:\ \exists s',s''\in\partial\T\text{~passing~through~}v\text{~such~that~}\Theta_{s'}\le t,\ \Theta_{s''}>t\}.\]
We claim that $\mathcal{V}$ is an infinite set. Suppose otherwise. Let $N=1+\max\{|w|:\ w\in\mathcal{V}\}$. For each $v\in\T$ with $|v|=N$, the subtree $\T_v$ is either $t$-nonexplosive or $t$-hyperexplosive. Let $v_1,v_2,...,v_n$ be the vertices of height $N$ such that $\T_{v_k}$ is $t$-nonexplosive. Each vertex in $\partial V(t)$ is a $(t-\Theta_{v_k})$-leaf of some subtree $\T_{v_k}$. Each subtree $\T_{v_k}$ contributes finitely many elements to the $t$-leaves $\partial V(t)$. Therefore, $\partial V(t)$ is a finite set, which is a contradiction. We have shown that $\mathcal{V}$ is an infinite set.

For each $v\in\mathcal{V}$, we have $L_v\ge\Theta_{s''}\ge \Theta_{s''}-\Theta_{s'}>t-(t-\epsilon)=\epsilon$. For each $m\in\mathcal{N}$, there exists $v\in\mathcal{V}$ such that $|v|\ge m$. Thus,
\[[|\partial V(t)|=\infty]\backslash A\subset \bigcup_{v\in\T,\,|v|\ge m}[L_v\ge\epsilon]=\bigcup_{n\ge m}\bigcup_{|v|=n}[L_v\ge\epsilon]\]
Therefore,
\begin{eqnarray*}
\P([|\partial V(t)|=\infty]\backslash A)\le\sum_{n=m}^\infty\sum_{|v|=n}\P(L_v\ge\epsilon)=\sum_{n=m}^\infty 2^n\P(L\ge\alpha^n\epsilon)\le\sum_{n=m}^\infty c2^ne^{-\alpha^n\epsilon}
\end{eqnarray*}
which tends to 0 as $m\to\infty$. Therefore, $[|\partial V(t)|=\infty]\backslash A$ is a null event. 

Next, let us show that $\overline{W}(t)\backslash [|\partial V(t)|=\infty]$ is a null event. Let $D$ be the null event introduced in the proof of \autoref{finitemaximal} (i). In other words, $D$ is the event that there exists a vertex $v\in\T$ such that $\T_v$ is $t$-explosive but does not have any $t$-hyperexplosive subtrees. Let
\[D'=\bigcup_{v\in\T}[L_v=t-\Theta_v]\]
Observe that $[L_v=t-\Theta_v]\subset [L=t]$, which is a null event because the distribution of $L$ is continuous (\autoref{contdist}). Then $D'$ is a countable union of null events and thus is also a null event. On the event $\overline{W}(t)\backslash (D\cup D')$, there exists a random path $s\in\partial\T$ such that $\Theta_s=t$. For each $j\in\mathbb{N}$, 
\[\sum_{k=j}^\infty \alpha^{-k}T_{s|k}=t-\Theta_{s|(j-1)}.\]
Thus, $L_{s|j}\ge t-\Theta_{s|(j-1)}$. Because the event $D'$ is excluded equality does not occurs, so we have $L_{s|j}> t-\Theta_{s|(j-1)}$. In other words, the subtree $\T_{s|j}$ is $t$-explosive but not $t$-hyperexplosive. It must have a $t$-hyperexplosive subtree according to \autoref{hypersubprop}. Consequently, it must have a maximal $t$-hyperexplosive subtree, say $\Pi_j$. Because $\Pi_j\neq\T_{s|j}$, $\Pi_j$ is also a maximal $t$-hyperexplosive subtree of $\T$. A sequence of distinct maximal $t$-hyperexplosive subtrees $\Pi_1$, $\Pi_2$, $\Pi_3$,\ldots is so obtained. Therefore, $\overline{W}(t)\backslash (D\cup D')\subset [|\partial V(t)|=\infty]$ and the proof is complete.
\end{proof}

The properties of ASA cascades established above, in particular \autoref{hypersubprop}, will play a key role in the use of the transformation of solution processes to relate solutions of the pantograph and those of the $\alpha-$Riccati equations.




\section{Stochastic Transformation of Solution Processes  and non-uniquess of  solutions for the
\texorpdfstring{$\alpha$}{}-Riccati equations}\label{Sec6}

The purpose of this section is to incorporate the stochastic Picard iterations method introduced in \autoref{PicardIteration} 
with the stochastic transformations approach described in \autoref{stoch_transf}
to construct a family 
of distinct global solutions to \eqref{diffalpha-ricc} for any $\alpha>1$ and a range of initial data $u_0$. Note that solutions to \eqref{diffalpha-ricc} blow up 
in finite time for sufficiently large initial data \cite{alphariccati}. The relevant form of the pantograph equation used for this purpose corresponds to the parameter $a=2$, which happens to be 
a linearization of  (\ref{diffalpha-ricc}) with $u_0=1$, about the 
constant steady state $u\equiv 1$:
\begin{equation}\label{markovsspantographeq}
w^\prime(t) = -w(t) + 2w(\alpha t), \quad w(0)=0.
\end{equation}

Our goal is to prove the following theorem.

\begin{thm}\label{main}
Let $\alpha>1$ and define $R_\alpha\subset \mathbb{R}$ by
\begin{equation}\label{R_alpha}
R_\alpha=\left[0, \, \max \left\{1,  \frac{2\alpha-1}{4}-\frac{6\alpha^2-15\alpha+4}{4(\alpha-1)(2\alpha-1)}\right\}\right)\cup\{1\}. 
\end{equation}
Then, for any $u_0\in R_\alpha$ and $\lambda>0$, there exists a solution $u_\lambda$ to \eqref{diffalpha-ricc} such that
\begin{equation}\label{u_lambda_ass}
\lim_{t\to\infty}\frac{1-u_\lambda(t)}{t^{-\gamma}}={\lambda},
\end{equation}
where $\gamma>0$ is given by \eqref{gma}. Consequently, there are infinitely many solutions converging to 1 with an algebraic rate 
$t^{-\gamma}$ as $t\to\infty$.
\end{thm}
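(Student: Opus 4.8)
The plan is to split the argument according to the three transformations outlined in \autoref{stoch_transf}, handling the ranges $u_0 = 1$ and $u_0 \in [0,\cdot)$ separately, and within $u_0 = 1$ splitting further into the subcritical regime $1 < \alpha \le 2$ and the supercritical regime $\alpha > 2$. The common engine is \autoref{prop11112}: it supplies a nontrivial unary solution process $\tsX_*(t) = (t - \tS)^{-\gamma}\IND{[\tS < t]}$ for the pantograph equation with $a = 2$, $w_0 = 0$ (note $\gamma_2 = \log_\alpha 2 = \gamma$), whose expectation $\eta(t)$ decays at the sharp algebraic rate $t^{-\gamma}$. The role of $R_\alpha$ is precisely to delimit the initial data for which the integrability/convergence arguments below go through.

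First I would treat $u_0 = 1$ and $1 < \alpha \le 2$. Here the strategy is transformation \eqref{trans2}: construct a \emph{non-indicator} solution process $X$ satisfying \eqref{alphrecursion} with $u_0 = 1$ and $X(t) \in [0,1]$ a.s., then set $X_\lambda = X^\lambda$ and $u_\lambda = \EXP(X_\lambda)$. To build such an $X$, I would run a stochastic Picard iteration (\autoref{X_n-eq}, whose convergence is guaranteed by \autoref{X_n-conv} once a.s.\ convergence is checked) from a carefully chosen ground state $X_0 \in [0,1]$ that is strictly below $1$ on a positive-probability event — the hyperexplosion of the ASA cascade for $\alpha > 1$ (\autoref{expl_asymptotics}, \autoref{alphexplosion}) is what lets the iteration converge to a limit that is genuinely below the steady state $1$. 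Monotonicity of the quadratic recursion in the ground state keeps $X_n \in [0,1]$, and a supermartingale- or monotone-limit argument gives a.s.\ convergence. The asymptotics \eqref{u_lambda_ass} would then come from comparing $1 - u_\lambda(t) = \EXP(1 - X(t)^\lambda)$ with $\lambda\,\EXP(1 - X(t))$ as $t \to \infty$ (using $1 - x^\lambda \sim \lambda(1-x)$ for $x \to 1$) together with the fact that $\EXP(1 - X(t)) \sim c\, t^{-\gamma}$; pinning the constant to exactly $\lambda$ (rather than $c\lambda$) requires normalizing the ground state so that $\EXP(1 - X(t)) \sim t^{-\gamma}$, which is where $\eta$ from \autoref{prop11112} enters as the comparison function. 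Dominated convergence, justified by uniform integrability as in the proof of \autoref{prop11112}(iii), upgrades the a.s.\ limit to a limit of expectations.

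Next, $u_0 = 1$ and $\alpha > 2$. Now \autoref{alphexplosion}(i) says $|\partial V(t)| < \infty$ a.s., so the multiplicative recursion alone cannot escape the minimal solution — this is the regime where transformation \eqref{trans1}, $X_\lambda(t) = e^{\lambda \sX(t)}$, is essential. I would first construct a nontrivial \emph{additive} solution process $\sX \ge 0$ for the pantograph equation with $a = 2$, $w_0 = 0$, again by stochastic Picard iteration (\autoref{pant_iter_conv}) using a ground state built from $\tsX_*$; for $\lambda < 0$ the processes $X_\lambda = e^{\lambda\sX}$ are bounded in $[0,1]$ hence automatically integrable, yielding distinct $u_\lambda = \EXP(X_\lambda)$. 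Replacing $\lambda$ by $-\lambda$ recovers the statement for $\lambda > 0$. The asymptotic \eqref{u_lambda_ass} follows from $1 - e^{-\lambda \sX(t)} \sim \lambda \sX(t)$ as $\sX(t) \to 0$ together with $\EXP(\sX(t)) \sim t^{-\gamma}$ (the latter being exactly \autoref{prop11112}(iii) applied through the Picard limit, since $w(t) = \EXP(\sX(t))$ solves \eqref{pantode}), again with dominated/uniform-integrability control. I expect \emph{this} construction — producing a nontrivial, suitably integrable $\sX$ whose Picard iteration converges and whose mean has the sharp $t^{-\gamma}$ asymptotics — to be the \textbf{main obstacle}, because for $\alpha > 2$ the tree geometry is such that explosion does not happen, so the nontriviality of $\sX$ must be extracted entirely from hyperexplosive \emph{subtrees} (\autoref{hypersubprop}, \autoref{finitemaximal}), and one must verify that the contribution of the random product of ground-state evaluations over the finitely many maximal $t$-hyperexplosive subtrees is both finite and of the correct order.

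Finally, for general $u_0 \in R_\alpha \setminus \{1\} = [0, c(\alpha))$, I would use transformation \eqref{trans3}: take $\tilde X_{u_0}$ to be the maximal solution process $\uprX(t) = u_0^{|\partial V(t)|}$ from \eqref{max_sol_proc} — which is non-minimal precisely because of hyperexplosion — multiply it by the family $X_\lambda$ (with $u_0 = 1$) constructed above, and set $u_{u_0,\lambda} = \EXP(\tilde X_{u_0} X_\lambda)$. One checks $\tilde X_{u_0} X_\lambda$ solves \eqref{alphrecursion} with initial data $u_0$ directly from the recursions; distinctness for distinct $\lambda$ follows because $\tilde X_{u_0}$ is non-minimal (so the $X_\lambda$ factor is not forced to $1$). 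The explicit upper endpoint in $R_\alpha$ is exactly the threshold below which $\EXP(\uprX(t))$ is finite (i.e.\ $u_0^{|\partial V(t)|}$ is integrable — a moment estimate on $|\partial V(t)|$ against $\log(1/u_0)$), which is why $R_\alpha$ takes the form \eqref{R_alpha}; working out that integrability bound and confirming \eqref{u_lambda_ass} survives the extra bounded factor $\tilde X_{u_0}(t) \to$ (its minimal-solution value on $[S > t]$, which $\to 1$) is the remaining routine-but-delicate computation. Throughout, I would cite \autoref{X_n-conv} and \autoref{pant_iter_conv} for the existence of the Picard limits, \autoref{expl_asymptotics} for the exponential tails of $S$ and $L$ that power every uniform-integrability estimate, and \autoref{prop11112} for the sharp algebraic rate.
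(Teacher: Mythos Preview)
Your high-level plan matches the paper's proof closely: the three-way split (subcritical $u_0=1$, supercritical $u_0=1$, general $u_0$ via the maximal process), the three transformations from \autoref{stoch_transf}, and the central role of \autoref{prop11112} are all exactly right. Two corrections, however.

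First, a factual error that misidentifies your ``main obstacle'': for \emph{every} $\alpha>1$ the ASA cascade is explosive and hyperexplosive (\autoref{alphexplosion}); what changes at $\alpha=2$ is only whether $|\partial V(t)|$ can be infinite with positive probability, not whether $S<\infty$. Consequently the hyperexplosive-subtree machinery (\autoref{hypersubprop}, \autoref{finitemaximal}) plays no role in the $\alpha>2$ argument. The actual reason the paper splits at $\alpha=2$ is technical and dual: in the subcritical regime $\gamma\ge 1$ the paper uses a \emph{deterministic} ground state $\rho_{M,\delta}(t)=1-\delta t^{-\gamma}$ for $t>M$ and checks the supersolution inequality $F[\rho_{M,\delta}]\le\rho_{M,\delta}$ (\autoref{72191}), which fails for $\gamma<1$; in the supercritical regime $\gamma<1$ the paper takes the deterministic ground state $\sX_0=\eta$, shows $\{\sX_n\}$ is a genuine \emph{martingale} (\autoref{martingalep}), and proves $\EXP(\sX)=\eta$ via uniform integrability $\EXP(\sX_n^\delta)\le\eta_\delta/2^\delta$ for $\delta\in(1,1/\gamma)$ (using \autoref{exampleone}) --- this interval is nonempty only when $\gamma<1$. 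So the obstacle is an integrability exponent, not tree geometry.

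Second, in the $1<\alpha\le 2$ case the exact constant $\lambda$ in \eqref{u_lambda_ass} does not come from comparison with $\eta$. From the explicit product formula for $X_n$ the paper derives the sandwich $e^{-\delta(t-L)^{-\gamma}}\le X_{M,\delta}(t)\le e^{-\delta(t-S)^{-\gamma}}$ on $[L<t]$ (\autoref{non-indicator}), giving $\lim(1-u_{M,\delta})/t^{-\gamma}=\delta$ directly; arbitrary $\lambda$ is then reached by raising $X_{M,\delta}$ to the power $\lambda/\delta$ (\autoref{main1}).
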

The proof consists of two steps. First, we construct a solution process $\{X(t)\}_{t\ge 0}$ to \eqref{alphrecursion} such that 
for each $t>0$, $0\le X(t)\le 1$ almost surely and $0<X(t)<1$ on the $t$-hyperexplosion event $[L<t]$. Second, we show 
that the function $u_\lambda(t)=\EXP[X(t)^\lambda]$ satisfies \eqref{diffalpha-ricc} and has the convergence rate 
\eqref{u_lambda_ass}. The details of the proof will be given in \autoref{firstcase}-\ref{lastcase}. We will first consider 
the case of $u_0=1$ in two separate regimes: {$\alpha\in(1,2]$ and $\alpha>2$}. We then extend the results to other initial data. 
In \autoref{lambda-asy} we explore the behavior of the family of solutions $u_\lambda$ constructed in the proof of \autoref{main} 
as $\lambda\to0$.


\subsection{Proof of \autoref{main} in the case \texorpdfstring{$u_0=1$}{} and \texorpdfstring{$1<\alpha\le 2$}{}}\label{firstcase}

A solution $u$ to \eqref{diffalpha-ricc} with $u_0=1$ is a fixed point of $F$, where 
\[F[u](t)=e^{-t}+\int_0^t e^{-s}u^2(\alpha(t-s))ds.\]
Let $\gamma>0$ be the number given by \eqref{gma}.
\begin{prop}\label{72191}
For sufficiently large $M>M_{\alpha}>0$ and sufficiently small $0<\delta<{\delta}_{M}$, the function
\begin{equation}\label{eq:72191}\rho_{M,\delta}(t)=\left\{ \begin{array}{*{35}{l}}
   1 & \text{if} & t\le M ,  \\
   1-\delta{{t}^{-\gamma}} & \text{if} & t>M   \\
\end{array} \right.\end{equation}
satisfies $F[\rho_{M,\delta}]\le \rho_{M,\delta}$.
\end{prop}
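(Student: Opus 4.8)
The plan is to verify the inequality $F[\rho_{M,\delta}](t) \le \rho_{M,\delta}(t)$ separately on the two regions $t \le M$ and $t > M$. Throughout, write $\rho = \rho_{M,\delta}$ and split the integral $\int_0^t e^{-s}\rho^2(\alpha(t-s))\,ds$ according to whether the argument $\alpha(t-s)$ falls below or above the threshold $M$, i.e.\ at $s = t - M/\alpha$. On the region $\alpha(t-s) \le M$ the integrand is simply $e^{-s}$; on the region $\alpha(t-s) > M$ we use $\rho^2(\alpha(t-s)) = (1 - \delta(\alpha(t-s))^{-\gamma})^2 \le 1 - 2\delta(\alpha(t-s))^{-\gamma} + \delta^2(\alpha(t-s))^{-2\gamma}$, and since $\alpha^{-\gamma} = 1/2$ by the definition \eqref{gma} of $\gamma$, the leading correction term becomes $-2\delta\alpha^{-\gamma}(t-s)^{-\gamma} = -\delta(t-s)^{-\gamma}$, which is exactly the size needed to match the defect $-\delta t^{-\gamma}$ in $\rho(t)$ for large $t$.

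First I would treat the case $t \le M/\alpha$ (so that $\alpha(t-s) \le M$ throughout, i.e.\ $\rho \equiv 1$ on the relevant range): there $F[\rho](t) = e^{-t} + \int_0^t e^{-s}\,ds = 1 = \rho(t)$, with equality, so nothing to prove. Next, for $M/\alpha < t \le M$, one gets
\[
F[\rho](t) = e^{-t} + \int_0^{t-M/\alpha} e^{-s}\rho^2(\alpha(t-s))\,ds + \int_{t-M/\alpha}^t e^{-s}\,ds,
\]
and since $\rho \le 1$ everywhere, $F[\rho](t) \le e^{-t} + \int_0^t e^{-s}\,ds = 1 = \rho(t)$. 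So the subinterval $t\le M$ is handled essentially for free by monotonicity of $F$ together with $\rho \le 1$.

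The substantive case is $t > M$, where we need $F[\rho](t) \le 1 - \delta t^{-\gamma}$, i.e.\ $1 - F[\rho](t) \ge \delta t^{-\gamma}$. Using $1 - F[\rho](t) = \int_0^t e^{-s}(1 - \rho^2(\alpha(t-s)))\,ds$ and the estimate above, the dominant contribution is $\int_{t-M/\alpha}^{\,t} e^{-s}\cdot 2\delta(\alpha(t-s))^{-\gamma}\,ds = 2\delta\alpha^{-\gamma}\int_{t-M/\alpha}^{\,t} e^{-s}(t-s)^{-\gamma}\,ds = \delta\int_{t-M/\alpha}^{\,t} e^{-s}(t-s)^{-\gamma}\,ds$; substituting $r = t-s$ gives $\delta e^{-t}\int_0^{M/\alpha} e^{r} r^{-\gamma}\,dr$. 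Here I would use the elementary asymptotic $\int_0^t e^{-s}(t-s)^{-\gamma}\,ds \sim \Gamma(1-\gamma)\,$ behaves like a full convolution: more precisely, extending the upper limit to $t$ and comparing, one shows $\int_{t-M/\alpha}^{t} e^{-s}(t-s)^{-\gamma}\,ds = t^{-\gamma}(1 + o(1))$ fails — rather the correct heuristic is that $\int_0^t e^{-(t-r)} r^{-\gamma}\,dr \to \Gamma(1-\gamma)$, a constant, which would overshoot. So instead the bookkeeping must be done by the self-consistent ansatz: one shows directly, via the change of variables $s \mapsto \alpha s$ inside the mild equation, that $F$ maps the affine-tail profile to another affine-tail profile with the factor $\alpha^{-\gamma}\cdot 2 = 1$ preserving the coefficient, and the error terms (the $\delta^2$ term, the contribution of the region where $\alpha(t-s)\le M$ which is $O(e^{-(t-M/\alpha)})$, and the tail $e^{-t}$) are all $o(t^{-\gamma})$ and can be absorbed provided $M$ is large (to push the exponentially small terms below $\delta t^{-\gamma}$ uniformly) and then $\delta$ is small (to make the $\delta^2$ term negligible against the $\delta$ term).

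\textbf{Main obstacle.} The delicate point is the uniform-in-$t$ comparison on $t > M$: the inequality $1 - F[\rho](t) \ge \delta t^{-\gamma}$ must hold for \emph{all} $t > M$ simultaneously, not just asymptotically, so one cannot simply invoke $\sim$ statements. I expect the cleanest route is to exploit that $\eta(t) = \EXP(\tsX_*(t;2,\alpha))$ from \autoref{prop11112} is an \emph{exact} solution of the linearized equation $\eta' = -\eta + 2\eta(\alpha t)$ with $\eta(t) \sim t^{-\gamma}$, so that $1 - (1-\delta\eta)$ is a natural sub/supersolution candidate; the profile $\rho_{M,\delta}$ is a truncated, slightly-adjusted version of $1 - \delta\eta$, and the role of choosing $M$ large is precisely to make the truncation error controllable, while the quadratic term $u^2$ versus the linear $2u$ in the pantograph equation contributes the favorable sign (since near $u=1$, $u^2 - 1 = 2(u-1) + (u-1)^2$ and $(u-1)^2 \ge 0$, working in the direction that helps the supersolution inequality). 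Carrying out this comparison rigorously — identifying the exact threshold on $M$ and then on $\delta$, and checking the two terms $e^{-t}$ and the $\alpha(t-s)\le M$ piece are dominated uniformly — is the technical heart of the argument, but it is a bounded computation once the structure above is in place.
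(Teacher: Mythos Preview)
Your overall strategy matches the paper's: the case $t\le M$ is trivial by $\rho\le 1$, and for $t>M$ one passes to $1-F[\rho](t)$ and exploits $2\alpha^{-\gamma}=1$. But your execution has a region error that derails the computation: the set where $\rho(\alpha(t-s))<1$ is $\alpha(t-s)>M$, i.e.\ $s\in[0,\,t-M/\alpha]$, \emph{not} $s\in[t-M/\alpha,\,t]$ as you write in the ``dominant contribution'' line. (You had the split right in the $M/\alpha<t\le M$ case, then reversed it.) With the correct limits, the substitution $r=t-s$ gives
\[
1-F[\rho](t)=\delta\, e^{-t}\int_{M/\alpha}^{t}e^{r}r^{-\gamma}\Bigl(1-\tfrac{\delta}{4}r^{-\gamma}\Bigr)dr,
\]
and the target inequality becomes $e^{-t}\int_{M/\alpha}^{t}e^{r}r^{-\gamma}(1-\tfrac{\delta}{4}r^{-\gamma})\,dr\ge t^{-\gamma}$. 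This integral is $\sim t^{-\gamma}$ as $t\to\infty$ (not a constant like $\Gamma(1-\gamma)$; that asymptotic heuristic is also off), so the issue is exactly the uniform-in-$t$ control you flag as the main obstacle.

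The paper resolves that obstacle not via the pantograph solution $\eta$ but by a direct monotonicity trick you are missing. Set $f_1(t)=\int_{M/\alpha}^t e^s s^{-\gamma}\,ds - e^t t^{-\gamma}$ and observe $f_1'(t)=\gamma e^t t^{-\gamma-1}>0$, so $f_1(t)\ge f_1(M)$ for all $t\ge M$. This reduces the uniform inequality to the single endpoint check $f_1(M)>0$, which holds for $M$ large (put $f_2(x)=\int_{x/\alpha}^x e^s s^{-\gamma}\,ds-e^x x^{-\gamma}$ and verify $f_2(x)\to\infty$). Once $M$ is fixed, $\delta_M=4\inf_{t\ge M}f(t)>0$ with $f(t)=f_1(t)\big/\int_{M/\alpha}^t e^s s^{-2\gamma}\,ds$ supplies the threshold on $\delta$. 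Your proposed detour through $\eta$ is plausible in spirit but would require its own uniform comparison and is more circuitous than this two-line derivative computation.
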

\begin{proof}
To simplify the notation in this proof, we will drop the subscripts of $\rho_{M,\delta}$. Let 
\[\tilde\rho(t)=1-\rho(t)=\left\{ \begin{array}{*{35}{l}}
   0 & \text{if} & t\le M ,  \\
   \delta{{t}^{-\gamma}} & \text{if} & t>M   \\
\end{array} \right.\] 
and \[G[\tilde\rho]=1-F[\tilde{\rho}]=\int_{0}^{t}{{{e}^{-(t-s)}}(2\tilde\rho(\alpha s)-\tilde\rho{{(\alpha s)}^{2}})ds}.\]
It suffices to show that $G[\tilde{\rho}] \ge \tilde{\rho}$. For $t\ge M$,
\begin{eqnarray*}G[\tilde\rho]=\int_{0}^{t}{{{e}^{-t+s}}(2\tilde\rho(\alpha s)-\tilde\rho{{(\alpha s)}^{2}})ds}
&=&\int_{M /\alpha }^{t}{{{e}^{-t+s}}(2\delta{{\alpha }^{-\gamma}}{{s}^{-\gamma}}
-{{\delta}^{2}}{{\alpha }^{-2\gamma}}{{s}^{-2\gamma}})ds}\\
&=&2\delta {{\alpha }^{-\gamma}}{{e}^{-t}}\int_{M /\alpha }^{t}{{{e}^{s}}{{s}^{-\gamma}}
\left(1-\frac{\delta{{\alpha }^{-\gamma}}}{2}{{s}^{-\gamma}}\right)ds}\\
&=&\delta {{e}^{-t}}\int_{M /\alpha }^{t}{{{e}^{s}}{{s}^{-\gamma}}\left(1-\frac{\delta}{4} {{s}^{-\gamma}}\right)ds}.
\end{eqnarray*}
The inequality $G[\tilde\rho]\ge \tilde\rho$ will be held for all $t\ge 0$ provided that 
\[\int_{M /\alpha }^{t}{{{e}^{s}}{{s}^{-\gamma}}\left(1-\frac{\delta}{4} {{s}^{-\gamma}}\right)ds}\ge 
{{e}^{t}}{{t}^{-\gamma}}\qquad \forall\,t\ge M.\]
This inequality is equivalent to
\begin{equation}\label{eq:72192} \frac{\delta}{4} \le f(t):=
\frac{\int_{M/\alpha }^{t}{{{e}^{s}}{{s}^{-\gamma }}ds}-{{e}^{t}}{{t}^{-\gamma }}}{\int_{M/\alpha }^{t}{{{e}^{s}}{{s}^{-2\gamma }}ds}}
\qquad \forall\, t\ge M.\end{equation}
We will show that for sufficiently large $M$, $\inf_{t\ge M}f(t)>0$. Once this is proven, \eqref{eq:72192} 
will be satisfied by choosing $\delta\le \delta_M=4\inf_{t\ge M}f(t)$. By L'Hospital Rule,
\[\underset{t\to \infty }{\mathop{\lim }}\,f(t)=\underset{t\to \infty }{\mathop{\lim }}\,
\frac{\gamma {{e}^{t}}{{t}^{-\gamma -1}}}{{{e}^{t}}{{t}^{-2\gamma }}}=\left\{ \begin{matrix}
   1 & \text{if} & \gamma =1  \\
   \infty  & \text{if} & \gamma >1.  \\
\end{matrix} \right. \]
Since $f$ is continuous on $(0,\infty)$, showing $\inf_{t\ge M}f(t)>0$ is equivalent to showing that 
$f(t)>0$ for all $t\ge M$. Note that $f(t)$ has the same sign as
\[f_1(t)=\int_{M/\alpha}^te^ss^{-\gamma}ds-e^tt^{-\gamma}.\]
Since $f_1'(t)=\gamma e^tt^{-\gamma-1}>0$, one sees that $f_1(t)\ge f_1(M)$ for all $t\ge M$. 
On the other hand, $f_1(M)=f_2(M)$ where
\[f_2(x)=\int_{x/\alpha}^xe^ss^{-\gamma}ds-e^xx^{-\gamma}.\]
One has $f_2'(x)=\gamma e^xx^{-\gamma-1}-\frac{1}{\alpha}e^{x/\alpha}(x/\alpha)^{-\gamma}$. Note that 
$f'_2(x)>1$ for sufficiently large $x$ and hence, $\lim_{x\to\infty}f_2(x)=\infty$. Hence, there exists $M_\alpha>0$ 
such that $f_2(M)>0$ for all $M>M_\alpha$. Therefore, $f(t)>0$ for all $t\ge M>M_\alpha$.
\end{proof}

\begin{prop}\label{martingale-a}
Let $M,\delta>0$ and $\rho=\rho_{M,\delta}$ be defined as in \autoref{72191}. Consider the stochastic Picard 
iterations $X_n(t)=X_{M,\delta,n}(t)$ with the ground state $X_0(t)=X_{M,\delta,0}(t)=\rho_{M,\delta}(t)$ and 
the initial state $u_0=1$. 
Then :
\begin{enumerate}[(i)]
\item For all $n\in\mathbb{N},$ $t\ge0$ and $\Theta_v$ defined by \eqref{agedef},
\begin{equation}\label{eq6211} X_{n}(t)=\prod_{v\in \T,\ |v|=n-1}\rho^2_{M,\delta}(\alpha^n(t-\Theta_v)),
\qquad\mbox{a.s.}\end{equation}
\item For each $t>0$, The sequence $\{ X_n(t)\}= \{X_{M,\delta,n}(t)\}$ is a non-negative supermartingale 
with respect to the filtration $\mathcal{F}_n=\sigma(T_v:\,|v|\le n-1)$.
\end{enumerate}
\end{prop}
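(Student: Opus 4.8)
The plan is to establish (i) by induction on $n$ and then deduce (ii) directly from the explicit product formula together with \autoref{72191}. Throughout I will use the convention, already built into \eqref{eq:72191}, that $\rho_{M,\delta}(s)=1$ for every $s\le M$, in particular for $s<0$; this is precisely what lets the two branches of the Picard recursion \eqref{X_n-eq} (the value $u_0=1$ on $[T_\theta\ge t]$, the product of subtree values on $[T_\theta<t]$) be written as a single product.

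\textbf{Part (i): induction on $n$.} Since \eqref{X_n-eq} holds a.s.\ for each fixed $t$, so will \eqref{eq6211}. The base case $n=1$ is read off from \eqref{X_n-eq} with ground state $\rho_{M,\delta}$: on $[T_\theta\le t]$ one gets $\rho_{M,\delta}^2(\alpha(t-T_\theta))=\rho_{M,\delta}^2(\alpha(t-\Theta_\theta))$ since $\Theta_\theta=T_\theta$, and on $[T_\theta>t]$ one gets $u_0=1=\rho_{M,\delta}^2(\alpha(t-T_\theta))$ because the argument is negative. For the inductive step I would substitute the inductive hypothesis, applied inside each of the two subtrees rooted at $1$ and $2$, into the right branch of \eqref{X_n-eq} evaluated at time $\alpha(t-T_\theta)$. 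The only bookkeeping needed is the pair of identities $\Theta_{j*w}=T_\theta+\alpha^{-1}\tilde\Theta^{(j)}_w$ and $\alpha^{n-1}\big(\alpha(t-T_\theta)-\tilde\Theta^{(j)}_w\big)=\alpha^{n}(t-\Theta_{j*w})$, where $\tilde\Theta^{(j)}_w$ denotes the replacement time of $w$ computed inside the subtree rooted at $j\in\{1,2\}$; these convert the product over depth-$(n-2)$ vertices of each subtree into the product over the depth-$(n-1)$ descendants of $j$, and multiplying over $j=1,2$ gives \eqref{eq6211} on $[T_\theta\le t]$. On $[T_\theta>t]$, every $\Theta_v$ with $|v|=n-1$ exceeds $t$ (as $\Theta_v\ge\Theta_\theta=T_\theta$), so each factor equals $1$ and the formula again matches $X_n(t)=u_0=1$.

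\textbf{Part (ii): from the formula to the supermartingale property.} Fix $t>0$. For the parameters of \autoref{72191} (shrinking $\delta$ if necessary so that $\delta M^{-\gamma}<1$) we have $0\le\rho_{M,\delta}\le 1$, hence $0\le X_n(t)\le 1$ by (i); so $X_n(t)$ is non-negative and integrable, and since $X_n(t)$ depends only on $\{T_v:|v|\le n-1\}$ it is $\mathcal{F}_n$-measurable. To verify $\EXP[X_{n+1}(t)\mid\mathcal{F}_n]\le X_n(t)$ I would group the depth-$n$ vertices of $\T$ by their parents and use $\Theta_{ui}=\Theta_u+\alpha^{-n}T_{ui}$ to rewrite \eqref{eq6211} as
\[
X_{n+1}(t)=\prod_{|u|=n-1}\prod_{i=1,2}\rho_{M,\delta}^2\big(\alpha(a_u-T_{ui})\big),\qquad a_u:=\alpha^{n}(t-\Theta_u).
\]
For $|u|=n-1$ the variables $\{T_{ui}\}$ are i.i.d.\ mean-one exponentials independent of $\mathcal{F}_n$ while $a_u$ is $\mathcal{F}_n$-measurable, so the conditional expectation factorizes over $u$ and $i$, and the crux is the computation
\[
\EXP\big[\rho_{M,\delta}^2(\alpha(a_u-T))\mid\mathcal{F}_n\big]=\int_0^\infty e^{-s}\rho_{M,\delta}^2\big(\alpha(a_u-s)\big)\,ds ,
\]
which, upon splitting the integral at $s=a_u$ and using $\rho_{M,\delta}\equiv 1$ on $(-\infty,M]$ for the tail $s>a_u$, equals $F[\rho_{M,\delta}](a_u)$ when $a_u\ge 0$ (the tail contributing exactly the boundary term $e^{-a_u}$) and equals $1=\rho_{M,\delta}(a_u)$ when $a_u<0$. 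Since $0\le F[\rho_{M,\delta}]\le\rho_{M,\delta}$ by \autoref{72191}, squaring and multiplying over $|u|=n-1$ yields $\EXP[X_{n+1}(t)\mid\mathcal{F}_n]=\prod_{|u|=n-1}F[\rho_{M,\delta}](a_u)^2\le\prod_{|u|=n-1}\rho_{M,\delta}(a_u)^2=X_n(t)$. The degenerate case $n=0$ is just $\EXP[X_1(t)]=F[\rho_{M,\delta}](t)\le\rho_{M,\delta}(t)=X_0(t)$, since $\mathcal{F}_0$ is trivial.

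\textbf{Expected main obstacle.} The delicate step is recognizing the conditional expectation as $F[\rho_{M,\delta}]$: one must notice that enlarging the exponential integral from $[0,a_u]$ to $[0,\infty)$ and invoking $\rho_{M,\delta}\equiv 1$ on negative arguments regenerates the operator $F$ of \autoref{72191} \emph{including} its boundary term $e^{-a_u}$, so that \autoref{72191} applies factorwise. The remaining ingredients — the induction in (i), the identification of the subtree replacement times, and the factorization of the conditional expectation over the independent depth-$n$ exponentials — are routine.
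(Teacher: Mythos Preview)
Your proposal is correct. Part (i) is essentially the paper's argument: induction on $n$, using that $\rho_{M,\delta}\equiv 1$ on $(-\infty,M]$ to merge the two branches of \eqref{X_n-eq} into a single product, and the subtree bookkeeping $\alpha^{n-1}(\alpha(t-T_\theta)-\tilde\Theta^{(j)}_w)=\alpha^n(t-\Theta_{j*w})$.

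For part (ii) you take a genuinely different route from the paper. The paper proves $\EXP[X_{n+1}(t)\mid\F_n]\le X_n(t)$ by induction on $n$: it writes $X_{n+1}(t)=X_n^{(1)}(\tau)X_n^{(2)}(\tau)$ with $\tau=\alpha(t-T_\theta)$, decomposes $\F_n=\sigma(T_\theta)\vee\F_{n-1}^{(1)}\vee\F_{n-1}^{(2)}$, and invokes the substitution property together with the inductive hypothesis on each subtree. Your argument instead unrolls the product formula \eqref{eq6211} all the way to depth $n$, freezes the $\F_n$-measurable quantities $a_u=\alpha^n(t-\Theta_u)$, and factorizes the conditional expectation over the independent depth-$n$ clocks $\{T_{ui}\}$; each resulting factor is exactly $F[\rho_{M,\delta}](a_u)$ (or $1$ when $a_u<0$), to which \autoref{72191} applies directly. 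Your approach is shorter and avoids both the induction and the filtration decomposition; the paper's approach, on the other hand, makes the self-similar structure of the recursion explicit and does not rely on having already proved the closed-form product in (i). One small cosmetic point: your displayed equality $\EXP[X_{n+1}(t)\mid\F_n]=\prod_{|u|=n-1}F[\rho_{M,\delta}](a_u)^2$ should, strictly speaking, carry a case split for $a_u<0$ (where the factor is $1=\rho_{M,\delta}(a_u)$ rather than $F[\rho_{M,\delta}](a_u)$), but you handled that case correctly in the preceding sentence and the inequality is unaffected.
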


\begin{proof}
By definition, the Picard iteration is
\begin{equation}\label{rho-iterations} 
X_0(t)=\rho(t),\quad {{ X}_{n}}(t)=\left\{ \begin{array}{*{35}{l}}
   1 & \text{if} & T_\theta\ge t,  \\
    X_{n-1}^{(1)}(\alpha (t-T_\theta))\, X_{n-1}^{(2)}(\alpha (t-T_\theta)) & \text{if} & T_\theta<t.  
\end{array} \right., n\ge1,
\end{equation}
where $ X_{n-1}^{(1)}$ and $ X_{n-1}^{(2)}$ are conditionally on $T_\theta$ i.i.d.\ copies of $ X_{n-1}$.

The formula \eqref{eq6211} from part {\em (i)} follows by induction from the stochastic iterations. Indeed, for 
$n=1$, since $\rho(t)=1$ for $t\le0$  we have
\[
X_1(t)\stackrel{\eqref{rho-iterations}}{=}\IND{[T_\theta\ge t]}+\rho^2(\alpha(t-T_\theta))
\IND{[T_\theta<t]}=\rho^2(\alpha(t-T_\theta)),
\]
So \eqref{eq6211} holds. The inductive step follows similarly, once we observe that the product in the 
right-hand side of \eqref{eq6211} is 1 if $t\le 0$,

To prove {\em (ii)}, we will show by induction on $n$ that $\EXP[ X_{n+1}(t)|\mathcal{F}_n]\le X_n(t)$. 
For $n=1$, $\mathcal{F}_1=\sigma(T_\theta)$ and, as noted above, $X_1(t)= \rho^2(\alpha(t-T_\theta))$. 
Note that $\mathcal{F}_0$ is the trivial $\sigma$-field. Thus, since $\rho(t)=1$ for $t\le 0$, using \autoref{72191}, we obtain
\begin{eqnarray}
\EXP(X_1(t)|\mathcal{F}_0)=\EXP(X_1(t))&=&\int\limits_0^\infty e^{-s} \rho^2(\alpha(t-s))\, ds \nonumber\\
& = & e^{-t}+\int\limits_0^t e^{-s} \rho^2(\alpha(t-s))\, ds \nonumber\\
&\le & \rho(t)=X_0(t) \quad \mbox{for all $t\ge 0$.} \label{n=1case}
\end{eqnarray}

For $n=2$, using \eqref{eq6211}, we write
\begin{eqnarray*}
 X_2(t)=\prod_{|v|=1}\rho^2(\alpha^2(t-\Theta_v))=\prod_{|v|=1}\rho^2(\alpha(\tau-T_v))
 = X_1^{(1)}(\tau) X_1^{(2)}(\tau)
\end{eqnarray*}
where $\tau=\alpha(t-T_\theta)$ and
\begin{eqnarray*}
 X_1^{(1)}(\tau)=\1_{[T_1\ge \tau]}+\rho^2(\alpha(\tau-T_1))\1_{[T_1< \tau]} = \rho^2(\alpha(\tau-T_1)), \\
 X_1^{(2)}(\tau)=\1_{[T_2\ge \tau]}+\rho^2(\alpha(\tau-T_2))\1_{[T_2< \tau]} = \rho^2(\alpha(\tau-T_2)). 
\end{eqnarray*}
Note that, conditionally on $T_\theta$, $X_1^{(1)}(\tau)$ and  $X_1^{(2)}(\tau)$ are i.i.d. and have the same 
distribution as $ X_1(\tau)$. Therefore, using the substitution property for conditional expectations 
together with \eqref{n=1case}, we have:
\begin{eqnarray}
\EXP( X_2(t)|T_\theta)&=&
\EXP( X_1^{(1)}(\tau)X_1^{(2)}(\tau)|\tau)=\EXP( X_1^{(1)}(\tau)|\tau)\cdot\EXP(X_1^{(2)}(\tau)|\tau)\nonumber\\
&=&\EXP( X_1(\tau)|\tau)^2\le \rho^2(\tau)=\rho^2(\alpha(t-T_\theta))=X_1(t).
\label{subpropcalc1}
\end{eqnarray}

Now suppose $\EXP( X_n(t)|\F_{n-1})\le X_{n-1}(t)$ for some $n\ge 2$. Using \eqref{rho-iterations} together with 
the fact that $X_n(t)=1$ for $t\le0$ (as follows from \eqref{eq6211}), we have 
\[
X_{n+1}(t)=X_{n}^{(1)}(\tau)\, X_{n}^{(2)}(\tau), 
\]
where, as before, $\tau=\alpha(t-T_\theta)$. Recall that, conditionally on $T_\theta$, $X_{n}^{(1)}(\tau)$ and  
$X_{n}^{(2)}(\tau)$ are independent and distributed as $X_n(\tau)$. For $k\in\{1,2\}$, denote 
$\F^{(k)}_{n-1}=\sigma(T_{kv}:|v|=n-2)$. Because $\F_n=\sigma(T_\theta)\vee \F^{(1)}_{n-1} \vee 
\F^{(2)}_{n-1}$ and that $\sigma(T_\theta)$, $\F^{(1)}_{n-1}$ and $\F^{(2)}_{n-1}$ are independent, 
we get by organizing random variables according to 
$$(T_v:|v|\le n-1) = \left(\,T_\theta, \left(\, (T_{1v}:|v|\le n-2),(T_{2v}:|v|\le n-2)\,\right)\,\right),$$
and applying the substitution property in two directions, as in (\ref{subpropcalc1}), followed by the 
induction hypothesis,
\begin{eqnarray}\label{subpropcalc2}
&&\EXP({{ X}_{n+1}}(t)|{{\F}_{n}})\nonumber\\
&=&\IND{[T_\theta\ge t]}+\EXP(X_{n}^{(1)}(\tau) X_{n}^{(2)}(\tau){\IND{[{{T}_{\theta}}<t]}}|{{\F}_{n}})\nonumber\\
&=&\1_{[T_\theta\ge t]}+\EXP({\IND{[{{T}_{\theta}}<t]}} X_{n}^{(1)}(\tau)|\sigma({T}_{\theta})\vee\F_{n-1}^{(1)})\cdot
\EXP({\IND{[{{T}_{\theta}}<t]}} X_{n}^{(2)}(\tau)|\sigma({T}_{\theta})\vee\F_{n-1}^{(2)})\nonumber\\
&\le&\IND{[T_\theta\ge t]}+ X_{n-1}^{(1)}(\tau) X_{n-1}^{(2)}(\tau){\IND{[{{T}_{\theta}}<t]}} = {{ X}_{n}}(t).
\end{eqnarray}
\end{proof}

\begin{prop}\label{non-indicator} 
Assume $\alpha\in(1,2]$. Let $M,\delta>0$ and $\rho_{M,\delta}$ be defined as in \autoref{72191} and 
$\{ X_{M,\delta,n}(t)\}_{n\ge 1}$ be the stochastic process defined by \eqref{eq6211}. 
\begin{enumerate}[(i)]
\item As $n\to\infty$, $ X_{M,\delta,n}(t)$ converges a.s.\ and the limit process $ X_{M,\delta}(t)
=\lim_{n\to\infty} X_{M,\delta,n}(t)$ satisfies \eqref{alphrecursion} with $u_0=1$. Moreover, $0< X_{M,\delta}(t)<1$ 
on the event $[L<t]$, where $L$ denotes the longest path, see \eqref{longdef}.
\item The function $u_{M,\delta}(t)=\EXP (X_{M,\delta}(t))$ solves \eqref{diffalpha-ricc} with $u_0=1$. Moreover, 
\begin{eqnarray*}
\lim_{t\to\infty}\frac{1-u_{M,\delta}(t)}{t^{-\gamma}}&=&{\delta}\,.
\end{eqnarray*}
\end{enumerate}
\end{prop}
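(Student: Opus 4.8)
The plan is to read everything off from the explicit product representation \eqref{eq6211} together with the supermartingale property of \autoref{martingale-a}, and to pass to expectations only at the very end.

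\emph{Part (i).} For fixed $t>0$, \autoref{martingale-a}(ii) makes $\{X_{M,\delta,n}(t)\}_n$ a nonnegative supermartingale, which therefore converges a.s.; write $X(t):=X_{M,\delta}(t)$ for the limit. Since these iterates are the Picard iterates \eqref{X_n-eq} with ground state $\rho_{M,\delta}$ (by \autoref{martingale-a}(i)) and converge a.s.\ for every $t>0$, \autoref{X_n-conv} shows $X$ is the a.s.\ limit and satisfies \eqref{alphrecursion} with $u_0=1$; also $0\le X_n\le1$ in \eqref{eq6211} passes to $0\le X\le1$. To locate $X(t)$ on the hyperexplosion event $[L<t]$, I would pass to the limit in \eqref{eq6211}. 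There every branching time obeys $\Theta_v\le L<t$, so $t-\Theta_v\in[t-L,t]$ uniformly in $v$, and since $\alpha>1$ there is $n_0(\omega)$ with $\alpha^n(t-\Theta_v)>M$ for all $|v|=n-1$, $n\ge n_0$; hence each factor of \eqref{eq6211} equals $(1-\delta\beta_v)^2$ with $\beta_v:=(\alpha^n(t-\Theta_v))^{-\gamma}$ and $\max_v\delta\beta_v\to0$. Taking logarithms and using $-\log(1-x)=x+O(x^2)$ for small $x$, one gets $-\log X_n(t)=2\delta\sum_{|v|=n-1}\beta_v+O\big(\sum_{|v|=n-1}\beta_v^2\big)$. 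The arithmetic point is $\alpha^\gamma=2$: then $\alpha^{-n\gamma}=2^{-n}$, so $\sum_{|v|=n-1}\beta_v=2^{-n}\sum_{|v|=n-1}(t-\Theta_v)^{-\gamma}\in\big[\tfrac{1}{2}t^{-\gamma},\,\tfrac{1}{2}(t-L)^{-\gamma}\big]$ (there are $2^{n-1}$ vertices of generation $n-1$ and $t-\Theta_v\in[t-L,t]$), while $\sum_{|v|=n-1}\beta_v^2\le 2^{-n-1}(t-L)^{-2\gamma}\to0$. Consequently, on $[L<t]$,
\[
e^{-\delta(t-L)^{-\gamma}}\le X(t)\le e^{-\delta t^{-\gamma}},
\]
so in particular $0<X(t)<1$ there (a monotone-convergence refinement even gives $X(t)=\exp\!\big(-\delta\int_{\partial\T}(t-\Theta_s)^{-\gamma}\,d\mu(s)\big)$ for the fair coin-tossing measure $\mu$ on $\partial\T$, but this two-sided bound is all that is needed below).

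\emph{Part (ii).} Since $0\le X(t)\le1$, $u(t)=\EXP(X(t))\in[0,1]$ is well defined. Conditioning \eqref{alphrecursion} on $T_\theta$ and using that $X^{(1)},X^{(2)}$ are independent copies of $X$ independent of $T_\theta$ gives $u(t)=e^{-t}+\int_0^t e^{-s}u^2(\alpha(t-s))\,ds$; so $u$ solves \eqref{mildalpha} with $u_0=1$ and is therefore a global $C^\infty$ solution of \eqref{diffalpha-ricc} with $u_0=1$. The lower bound is soft: taking expectations in $\EXP(X_{n+1}(t)\mid\mathcal{F}_n)\le X_n(t)$ makes $u^{(n)}(t)=\EXP(X_n(t))$ nonincreasing with $u^{(0)}\equiv\rho_{M,\delta}$, so $u(t)\le\rho_{M,\delta}(t)=1-\delta t^{-\gamma}$ for $t>M$, whence $\liminf_{t\to\infty}(1-u(t))/t^{-\gamma}\ge\delta$. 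For the matching upper bound I would split
\[
\frac{1-u(t)}{t^{-\gamma}}=t^\gamma\EXP\big[(1-X(t))\,\1_{[L<t/2]}\big]+t^\gamma\EXP\big[(1-X(t))\,\1_{[L\ge t/2]}\big].
\]
By \autoref{expl_asymptotics}, $\P(L\ge t/2)\le ce^{-t/2}$, so the second term is $o(1)$. On $[L<t/2]$, Part (i) gives $1-X(t)\le 1-e^{-\delta(t-L)^{-\gamma}}\le\delta(t-L)^{-\gamma}\le\delta\,2^\gamma t^{-\gamma}$, so the first integrand is bounded by the constant $\delta 2^\gamma$; and since $L<\infty$ a.s., the two-sided bound of Part (i) together with $1-L/t\to1$ force $(1-X(t))t^\gamma\to\delta$ a.s., so $(1-X(t))t^\gamma\,\1_{[L<t/2]}\to\delta$ a.s. Dominated convergence then yields $\lim_{t\to\infty}(1-u(t))/t^{-\gamma}=\delta$.

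\emph{Main obstacle.} The only genuinely delicate point is the \emph{exact} constant $\delta$ (rather than merely $1-u(t)\asymp t^{-\gamma}$): it hinges on the coincidence $\alpha^{\log_\alpha 2}=2$, which makes the $2^{n-1}$ branches of generation $n-1$ exactly balance the self-similar rescaling $\alpha^{-n\gamma}$, so that $\log$ of the random product \eqref{eq6211} converges to a finite nonzero limit on $[L<t]$; and on the limit–expectation interchange, which is made legitimate only by the sharp exponential tail $\P(L\ge t)\sim e^{-t}$ of \autoref{expl_asymptotics}. The recursion for $X$, the mild equation for $u$, and the comparison $u\le\rho_{M,\delta}$ are all immediate from \autoref{72191}, \autoref{X_n-conv} and \autoref{martingale-a}.
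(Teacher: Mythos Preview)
Your proof is correct and follows essentially the same approach as the paper: supermartingale convergence plus the explicit product formula \eqref{eq6211} sandwich $X(t)$ on $[L<t]$, and the asymptotics in (ii) are then read off using the exponential tail of $L$ from \autoref{expl_asymptotics}. Your variations are minor simplifications: you bound $\Theta_v\in[0,L]$ rather than $[S_{n-1},L_{n-1}]$ (yielding $e^{-\delta(t-L)^{-\gamma}}\le X(t)\le e^{-\delta t^{-\gamma}}$ instead of the paper's $e^{-\delta(t-L)^{-\gamma}}\le X(t)\le e^{-\delta(t-S)^{-\gamma}}$), you obtain the lower bound $\liminf(1-u)/t^{-\gamma}\ge\delta$ directly from $u\le\rho_{M,\delta}$ via the decreasing supermartingale expectations (the paper uses Fatou on $(1-e^{-\delta(t-S)^{-\gamma}})/t^{-\gamma}$), and for the upper bound you use a fixed cutoff $[L<t/2]$ with dominated convergence rather than the paper's $[L<\epsilon t]$ followed by $\epsilon\to0$.
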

\begin{proof}
To simplify the notations in the proof, we will drop the subscripts $M$ and $\delta$ and will only keep the subscript $n$.\\
By \autoref{martingale-a} (ii) and Doob's Martingale Convergence Theorem for positive supermartingales, 
the sequence $\{X_n(t)\}= \{X_{M,\delta,n}(t)\}_{n\ge 1}$ converges a.s.. Denote the limit by $ X(t)= X_{M,\delta}(t)$. 
Since for all $n\in\mathbb{N}$ and $t>0$, $X_n(t)\in[0,1]$ a.s., we have $\EXP(X(t))\in[0,1]$. By \autoref{X_n-conv}, 
$X(t)$ satisfies \eqref{alphrecursion} with $u_0=1$. Thus $u(t)=\EXP (X(t))$ satisfies \eqref{diffalpha-ricc}.

On the event $[L<t]$, one has
\[ X_n(t)=\prod_{|v|=n-1}\rho^2(\alpha^n(t-\Theta_v)),\ \ \ \ \forall\,n\ge 1.\]
Since $t-\Theta_v\ge t-L>0$, we have $\alpha^n(t-\Theta_v)>M$ for sufficiently large $n$ independent on $v\in\T$. 
Here, $M$ is the number in \autoref{72191}. Thus, for sufficiently large $n$,
\[ X_n(t)\IND{[L<t]}=\prod_{|v|=n-1}(1-\delta (\alpha^n(t-\Theta_v))^{-\gamma})^2\IND{[L<t]}
=\prod_{|v|=n-1}\left(1-\delta \frac{(t-\Theta_v)^{-\gamma}}{2^n}\right)^2\IND{[L<t]}\]
Denote $S_n=\min_{|v|=n}\Theta_v$ and $L_n=\max_{|v|=n}\Theta_v$. Note that 
$S_{n-1}\le\Theta_v\le L_{n-1}$ for $|v|=n-1$. Thus,
\[\prod\limits_{|v|=n-1}{\left( 1-\delta \frac{{{(t-{{L}_{n}})}^{-\gamma }}}{{{2}^{n}}} \right)^2}\IND{[L<t]}
\le  X_n(t)\IND{[L<t]}\le \prod\limits_{|v|=n-1}{\left( 1-\delta \frac{{{(t-{{S}_{n}})}^{-\gamma }}}{{{2}^{n}}} \right)^2}\IND{[L<t]}\]
In other words,
\begin{equation}\label{Xnestimate}\left( 1-\delta \frac{{{(t-{{L}_{n}})}^{-\gamma }}}{{{2}^{n}}} \right)^{2\cdot2^{n-1}}\IND{[L<t]}
\le  X_n(t)\IND{[L<t]}\le \left( 1-\delta \frac{{{(t-{{S}_{n}})}^{-\gamma }}}{{{2}^{n}}} \right)^{2\cdot2^{n-1}}\IND{[L<t]}.\end{equation}
Letting $n\to\infty$, one gets
\begin{equation}\label{sandwich}
e^{-\delta(t-L)^{-\gamma}}\IND{[L<t]}\le X(t)\IND{[L<t]}\le e^{-\delta(t-S)^{-\gamma}}\IND{[L<t]}\,.
\end{equation}
Therefore, $0< X(t)<1$ on the event $[L<t]$.\\

To establish the limit in part {\em (ii)}, we estimate
\[\EXP( X(t)\IND{[L<t]})\le\EXP (X(t))=\EXP( X(t)\IND{[L<t]})+\EXP( X(t)\IND{[L\ge t]})\le\EXP(X(t)\IND{[L<t]})
+\EXP(\IND{[L\ge t]}).\]
Together with \eqref{sandwich}, we have
\[\EXP\left(e^{-\delta(t-L)^{-\gamma}}\IND{[L<\epsilon t]}\right)\le u(t)\le \EXP\left(e^{-\delta(t-S)^{-\gamma}}
\IND{[L<t]}\right)+\EXP(\IND{[L\ge t]})\]
for any constant $\epsilon\in(0,1)$. Hence,
\[\EXP\left((1-e^{-\delta(t-S)^{-\gamma}})\IND{[L<t]}\right)\le 1-u(t)\le \EXP\left((1-e^{-\delta(t-L)^{-\gamma}})
\IND{[L<\epsilon t]}\right)+\EXP(\IND{[L\ge \epsilon t]}).\]
By \autoref{expl_asymptotics}, one has $\EXP(\IND{[L\ge \epsilon t]})\le Ce^{-\epsilon t}$ for all $t>0$. 
Dividing both sides of the above inequalities by $t^{-\gamma}$, we have
\begin{equation}\label{rateofconv}
\EXP\left(\frac{1-e^{-\delta(t-S)^{-\gamma}}}{t^{-\gamma}}\IND{[L<t]}\right)\le \frac{1-u(t)}{t^{-\gamma}}\le 
\EXP\left(\frac{1-e^{-\delta(t-L)^{-\gamma}}}{t^{-\gamma}}\IND{[L<\epsilon t]}\right)+Ct^\gamma e^{-\epsilon t}
\end{equation}
Note that almost surely
\[\underset{t\to \infty }{\mathop{\lim }}\,\frac{1-{{e}^{-{\delta}{{(t-S)}^{-\gamma }}}}}{{{t}^{-\gamma }}}=\delta \]
Also, $\lim_{t\to\infty}\IND{[L<t]}=\IND{[L<\infty]}=1$ a.s. due to hyperexplosion \cite{alphariccati}. By Fatou's Lemma,
\[\underset{t\to \infty }{\mathop{\liminf }}\,\frac{1-u(t)}{{{t}^{-\gamma }}}\ge 
{\EXP}\left( \underset{t\to \infty }{\mathop{\lim }}\,\frac{1-{{e}^{-{\delta}{{(t-S)}^{-\gamma }}}}}{{{t}^{-\gamma }}}{\IND{[L<t]}} 
\right)=\delta .\]
By the inequality $1-e^{-x}\le x$, one has
\begin{eqnarray*}\textup{RHS}\eqref{rateofconv}\le
\EXP\left(\frac{\delta(t-L)^{-\gamma}}{t^{-\gamma}}\1_{L<\epsilon t}\right)+Ct^{\gamma}e^{-\epsilon t}
&\le& 
\EXP\left(\frac{\delta(t-\epsilon t)^{-\gamma}}{t^{-\gamma}}\1_{L<\epsilon t}\right)+Ct^{\gamma}e^{-\epsilon t}\\
&\le& \delta(1-\epsilon)^{-\gamma}+Ct^{\gamma}e^{-\epsilon t}
\end{eqnarray*}
Thus,
\[\underset{t\to \infty }{\mathop{\limsup }}\,\frac{1-u(t)}{{{t}^{-\gamma }}}\le \delta(1-\epsilon)^{-\gamma}\]
Because this inequality is true for all $\epsilon\in(0,1)$, one has
\[\underset{t\to \infty }{\mathop{\limsup }}\,\frac{1-u(t)}{{{t}^{-\gamma }}}\le \delta,\]
which completes the proof.
\end{proof}

The following theorem completes the proof of \autoref{main} in the case $\alpha\in(1,2]$, $u_0=1$.  As a by-product of the stochastic transformations involved, it also includes a comparison among the different solutions we obtain.

\begin{thm}\label{main1}
Let $M,\delta>0$ and $\rho_{M,\delta}$ be defined as in \autoref{72191}, and $ X_{M,\delta}(t)$ be the process 
defined in \autoref{non-indicator}. Then, for any $\lambda\ge 0$, the process $X_{M,\delta,\lambda}(t)
=(X_{M,\delta}(t))^{\lambda/\delta}$ is a solution process satisfying \eqref{alphrecursion} with $u_0=1$, 
and the function $u_{M,\delta,\lambda}(t)=\EXP( X_{M,\delta,\lambda}(t))$ solves the problem 
\eqref{diffalpha-ricc} with $u_0=1$. Moreover: 
\begin{enumerate}[(i)]
\item \begin{equation}\label{conv-rate}
\lim_{t\to\infty}\frac{1-u_{M,\delta,\lambda}(t)}{t^{-\gamma}}={\lambda}.\end{equation}
\item For any $t>0$,
\begin{eqnarray*}
u_{M,\delta,\lambda}(t)&>&u_{M,\delta,\lambda'}(t)\ \ \ \textup{if}\ \ 0\le\lambda<\lambda',\\
u_{M,\delta,\lambda}(t)&\le&u_{M',\delta,\lambda}(t)\ \ \ \textup{if}\ \ M< M',\ 0<\delta<\min\{\delta_M,\delta_{M'}\}\\
u_{M,\delta,\lambda}(t)&\ge&u_{M,\delta',\lambda}(t)\ \ \ \textup{if}\ \ 0<\delta<\delta'<\delta_M.
\end{eqnarray*}
\end{enumerate}
\end{thm}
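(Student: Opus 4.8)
The plan is to obtain all assertions by applying the transformation \eqref{trans2} to the process $X_{M,\delta}(t)$ of \autoref{non-indicator}, controlling the resulting processes through the closed form \eqref{eq6211} and the sandwich \eqref{sandwich}. Throughout, $\alpha\in(1,2]$ is the standing assumption of this subsection, and I assume (as one may, $\delta$ being sufficiently small) that $\rho_{M,\delta}$ is $[0,1]$-valued. First, since $X_{M,\delta}(t)$ is a $[0,1]$-valued solution process for \eqref{alphrecursion} with $u_0=1$ (\autoref{non-indicator}(i)), the process $X_{M,\delta,\lambda}(t)=(X_{M,\delta}(t))^{\lambda/\delta}$ is well-defined, $[0,1]$-valued, and, by \eqref{trans2} with exponent $\lambda/\delta$, again a solution process for \eqref{alphrecursion} with $u_0=1$; being bounded, its expectation $u_{M,\delta,\lambda}(t)\in[0,1]$ exists and hence solves \eqref{mildalpha}, equivalently \eqref{diffalpha-ricc}, with $u_{M,\delta,\lambda}(0^+)=1$ since $e^{-t}=\EXP(X_{M,\delta,\lambda}(t)\1_{[T_\theta\ge t]})\le u_{M,\delta,\lambda}(t)\le 1$. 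For $\lambda=0$ this is just the constant solution $u\equiv 1$.

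For part (i), I would raise the sandwich \eqref{sandwich} to the power $\lambda/\delta$ to get $e^{-\lambda(t-L)^{-\gamma}}\1_{[L<t]}\le X_{M,\delta,\lambda}(t)\,\1_{[L<t]}\le e^{-\lambda(t-S)^{-\gamma}}\1_{[L<t]}$, which is exactly the bound used in the proof of \autoref{non-indicator}(ii) with $\delta$ replaced by $\lambda$. Since in addition $X_{M,\delta,\lambda}(t)\in[0,1]$ and $\P(L\ge t)\le c\,e^{-t}$ by \autoref{expl_asymptotics}, the chain of estimates there (equation \eqref{rateofconv}, Fatou's lemma for the lower bound, and the inequality $1-e^{-x}\le x$ together with the tail bound for the upper bound) goes through verbatim with $\delta$ replaced by $\lambda$, yielding \eqref{conv-rate}.

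For part (ii), I would use the closed form \eqref{eq6211}, $X_{M,\delta,n}(t)=\prod_{|v|=n-1}\rho_{M,\delta}^{2}(\alpha^{n}(t-\Theta_v))$, a product of factors in $[0,1]$. For the comparison in $M$: if $M\le M'$ then $0\le\rho_{M,\delta}\le\rho_{M',\delta}$ pointwise, so $X_{M,\delta,n}(t)\le X_{M',\delta,n}(t)$ for all $n$, hence $X_{M,\delta}(t)\le X_{M',\delta}(t)$ a.s., hence (raising to the $\lambda/\delta$ power) $X_{M,\delta,\lambda}(t)\le X_{M',\delta,\lambda}(t)$ and $u_{M,\delta,\lambda}(t)\le u_{M',\delta,\lambda}(t)$. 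For the comparison in $\delta$: raising \eqref{eq6211} to the $1/\delta$ power gives $X_{M,\delta,n}(t)^{1/\delta}=\prod_{|v|=n-1}\psi_{M,\delta}(\alpha^{n}(t-\Theta_v))$ with $\psi_{M,\delta}(s)=1$ for $s\le M$ and $(1-\delta s^{-\gamma})^{2/\delta}$ for $s>M$; the one calculation worth doing is that $\delta\mapsto(1-\delta s^{-\gamma})^{2/\delta}$ is nonincreasing (it reduces to $-\ln(1-x)-x/(1-x)<0$ for $x\in(0,1)$), so $X_{M,\delta,n}(t)^{1/\delta}\ge X_{M,\delta',n}(t)^{1/\delta'}$ for $\delta<\delta'$, whence $X_{M,\delta,\lambda}(t)=(X_{M,\delta}(t)^{1/\delta})^{\lambda}\ge(X_{M,\delta'}(t)^{1/\delta'})^{\lambda}=X_{M,\delta',\lambda}(t)$ and $u_{M,\delta,\lambda}(t)\ge u_{M,\delta',\lambda}(t)$. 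Finally, for the comparison in $\lambda$: for $0\le\lambda<\lambda'$, $X_{M,\delta,\lambda}(t)=X_{M,\delta}(t)^{\lambda/\delta}\ge X_{M,\delta}(t)^{\lambda'/\delta}=X_{M,\delta,\lambda'}(t)$ since $X_{M,\delta}(t)\in[0,1]$, with strict inequality on $[L<t]$ where $0<X_{M,\delta}(t)<1$ by \autoref{non-indicator}(i).

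The one ingredient not already available in the paper is that $\P(L<t)>0$ for every $t>0$, which is what upgrades the last pointwise inequality to the strict inequality $u_{M,\delta,\lambda}(t)>u_{M,\delta,\lambda'}(t)$; I expect this, rather than any of the comparisons themselves, to be the delicate point. I would argue as follows: $\mu(t):=\P(L<t)$ is nondecreasing, continuous (the law of $L$ has no atoms, \autoref{contdist}), with $\mu(t)\to 1$ as $t\to\infty$ (since $\P(L<\infty)=1$ for $\alpha>1$ by \autoref{alphexplosion}), and satisfies $\mu(t)=\int_0^t e^{-s}\mu(\alpha(t-s))^2\,ds$ (\autoref{contdist}); so $t^{*}:=\sup\{t:\mu(t)=0\}<\infty$, and if $t^{*}>0$ then $\mu(t^{*})=0$, yet restricting the integral to $s\in(0,\,t^{*}(1-\alpha^{-1}))$, a nonempty interval since $\alpha>1$ on which $\alpha(t^{*}-s)>t^{*}$ and hence $\mu(\alpha(t^{*}-s))>0$, forces $\mu(t^{*})>0$, a contradiction. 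Hence $\mu(t)>0$ for all $t>0$, and the strict comparisons follow.
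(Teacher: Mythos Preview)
Your proof is correct and follows essentially the same route as the paper: raise the sandwich \eqref{sandwich} to the power $\lambda/\delta$ and rerun the proof of \autoref{non-indicator}(ii); compare $M$ and $\lambda$ exactly as the paper does; and for the $\delta$-comparison you use the monotonicity of $\delta\mapsto(1-\delta s^{-\gamma})^{2/\delta}$, which is equivalent to the paper's use of Bernoulli's inequality $(1-\delta t^{-\gamma})^{\delta'/\delta}>1-\delta' t^{-\gamma}$. The one place you go beyond the paper is in actually proving that $\P(L<t)>0$ for all $t>0$, which the paper simply asserts (``$[L<t]$ is not a null event for any $t>0$''); your argument via the integral equation for $\mu(t)=\P(L\le t)$ and the definition of $t^*$ is clean and correct.
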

\begin{proof}
The fact that $X_{M,\delta,\lambda}(t)$ is a solution process satisfying \eqref{alphrecursion} with $u_0=1$, follows from 
raising to power $\alpha/\delta$ both sides of \eqref{alphrecursion} with $X=X_{M,\delta}$ and $u_0=1$. Thus, 
$u_{M,\delta,\lambda}=\EXP( X_{M,\delta,\lambda}(t))$ satisfies \eqref{diffalpha-ricc} with $u_0=1$, since, as it will 
be shown below, the expectation is finite.

To prove {\em (i)}, raise the equation \eqref{sandwich} to power $\lambda/\delta$ to obtain
\begin{equation}
e^{-\lambda(t-L)^{-\gamma}}\IND{[L<t]}\le (X_{M\delta}(t))^{\lambda/\delta}\IND{[L<t]}
\le e^{-\lambda(t-S)^{-\gamma}}\IND{[L<t]}\,.
\end{equation}
From here, one can follow the same lines of the proof of \autoref{non-indicator}, 
part  {\em (ii)}, to show that $u_{M,\delta,\lambda}(t)$ is finite and
\[\lim_{t\to\infty}\frac{1-u_{M,\delta,\lambda}(t)}{t^{-\gamma}}={\lambda}.\]
To prove {\em (ii)}, suppose $0\le \lambda<\lambda'$. Because $0< X_{M,\delta}(t)<1$ on the event 
$[L<t]$, $ X_{M,\delta}^{\lambda'/\delta}(t)< X_{M,\delta}^{\lambda/\delta}(t)$ on this event. Since $[L<t]$ is 
not a null event for any $t>0$, one has $u_{M,\delta,\lambda'}(t)<u_{M,\delta,\lambda}(t)$.

Next, suppose $M<M'$ and $0<\delta<\min\{\delta_M,\delta_{M'}\}$. From the definition of $\rho_{M,\delta}$ in \autoref{72191}, 
it is clear that $\rho_{M,\delta}(t)\le \rho_{M',\delta}(t)$, which leads to $ X_{M,\delta,n}(t)\le  X_{M',\delta,n}(t)$ for all $n$. 
Therefore, $u_{M,\delta,\lambda}(t)\le u_{M',\delta,\lambda}(t)$.

Next, suppose $0<\delta<\delta'<\delta_M$. Denote $\kappa=\delta'/\delta>1$. Note that for $t>M$,
\[\rho_{M,\delta}^\kappa(t)=(1-\delta t^{-\gamma})^\kappa>1-\kappa\delta t^{-\gamma}=\rho_{M,\delta'}(t).\]
Thus, $ X_{M,\delta,n}^{\kappa}(t)\ge X_{M,\delta',n}(t)$ for all $n$. Raising both sides to power $\lambda/\delta'$, 
one gets $ X_{M,\delta,n}^{\lambda/\delta}(t)\ge X_{M,\delta',n}^{\lambda/\delta'}(t)$. Hence, 
$u_{M,\delta,\lambda}(t)\ge u_{M,\delta',\lambda}(t)$.
\end{proof}

\subsection{Proof of \autoref{main} in the case \texorpdfstring{$u_0=1$}{} and \texorpdfstring{$\alpha>2$}{}}
\label{alpha_bgr_2_case}

The proof is based on the transform  \eqref{trans1} of solution processes and a stochastic Picard iteration \eqref{pant_bin_iter} with a suitably chosen ground state $\sX_0$. All such solutions have an exact convergence rate 
$1-u(t)\sim t^{-\gamma}$ as $t\to\infty$, where $\gamma\in(0,1)$ is the number given by \eqref{gma}. As a consequence, $1-u\not\in L^1$.


We start with the following result regarding the exponential stochastic transformation  \eqref{trans1}.
\begin{prop}\label{transf_pant_to_alph}
Suppose $\sX(t)$ is a nonnegative binary-tree solution process of \eqref{markovsspantographeq}, i.e.\ satisfying
\begin{equation}\label{1193}
\sX(t)=\left\{ \begin{array}{*{35}{l}}
   0 & \text{if} & T_\theta\ge t  \\
   \sX^{(1)}(\alpha (t-T_\theta))+\sX^{(2)}(\alpha (t-T_\theta)) & \text{if} & T_\theta<t  \\
\end{array} \right.
\end{equation}
\begin{enumerate}[(i)]
\item $w(t)=\EXP(\sX(t))$, if finite for all $t$, satisfies \eqref{markovsspantographeq}.
\item For any $\lambda\ge 0$, $X(t)=e^{-\lambda\sX(t)}$ satisfies \eqref{alphrecursion} and 
$u(t)=\EXP\left(e^{-\lambda\sX(t)}\right)$ satisfies \eqref{diffalpha-ricc}.
\end{enumerate}
\end{prop}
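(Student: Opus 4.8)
The plan is to verify both claims by conditioning on the root holding time $T_\theta$ and then passing to the integral (mild) forms, exactly as in the derivation of the minimal-solution formulas in \autoref{prob_setup_pant}; the measurability/Fubini bookkeeping needed to justify these interchanges is identical to that already carried out in the proof of \autoref{X_n-conv} and can be cited rather than repeated.

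For part (i), assume $w(t)=\EXP(\sX(t))$ is finite for every $t$. Conditioning on $T_\theta$ in \eqref{1193}: on $[T_\theta\ge t]$ one has $\sX(t)=0$, while on $[T_\theta<t]$ the two summands $\sX^{(1)}(\alpha(t-T_\theta))$ and $\sX^{(2)}(\alpha(t-T_\theta))$ are, conditionally on $T_\theta$, independent copies of $\sX$ evaluated at $\alpha(t-T_\theta)$. Taking expectations and using $T_\theta\sim\mathrm{Exp}(1)$ gives
\[ w(t)=\int_0^t e^{-s}\bigl(w(\alpha(t-s))+w(\alpha(t-s))\bigr)\,ds=2\int_0^t e^{-s}w(\alpha(t-s))\,ds, \]
which is precisely \eqref{mildpant} with $a=2$, $w_0=0$. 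Since any global solution of the integral equation is $C^\infty$ on $(0,\infty)$ (as recalled in the introduction), rewriting $w(t)=2e^{-t}\int_0^t e^{u}w(\alpha u)\,du$ and differentiating in $t$ yields $w'(t)=-w(t)+2w(\alpha t)$, while $w(0)=0$ is immediate; this is \eqref{markovsspantographeq}.

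For part (ii), fix $\lambda\ge 0$ and set $X(t)=e^{-\lambda\sX(t)}$. Since $\sX\ge 0$, we have $X(t)\in(0,1]$ a.s., so $X$ is a bounded, hence integrable, progressively measurable process regardless of whether $w=\EXP(\sX)$ is finite. On $[T_\theta\ge t]$, $\sX(t)=0$ gives $X(t)=1$; on $[T_\theta<t]$, the key algebraic point is that the exponential converts the sum in \eqref{1193} into a product:
\[ X(t)=e^{-\lambda\sX^{(1)}(\alpha(t-T_\theta))}\,e^{-\lambda\sX^{(2)}(\alpha(t-T_\theta))}. \]
Setting $X^{(j)}(\tau)=e^{-\lambda\sX^{(j)}(\tau)}$ for $j=1,2$, these are independent copies of $X$ that are independent of $T_\theta$ — here one uses that $\sX^{(1)},\sX^{(2)}$ were taken independent in the definition of a binary stochastic solution process (see the footnote to \eqref{pantbinary}) — so $X$ satisfies \eqref{alphrecursion} with $u_0=1$. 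Then $u(t)=\EXP(X(t))=\EXP(e^{-\lambda\sX(t)})$, and conditioning on $T_\theta$ together with the independence of $X^{(1)}(\alpha(t-s))$ and $X^{(2)}(\alpha(t-s))$ gives
\[ u(t)=e^{-t}+\int_0^t e^{-s}\EXP\!\bigl[X^{(1)}(\alpha(t-s))\bigr]\EXP\!\bigl[X^{(2)}(\alpha(t-s))\bigr]\,ds=e^{-t}+\int_0^t e^{-s}u^2(\alpha(t-s))\,ds, \]
which is \eqref{mildalpha} with $u_0=1$; differentiating as in part (i) gives $u'(t)=-u(t)+u^2(\alpha t)$ with $u(0)=1$, i.e.\ \eqref{diffalpha-ricc}.

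I expect no genuinely hard step here: the whole content is the identity $e^{-\lambda(a+b)}=e^{-\lambda a}e^{-\lambda b}$ turning the additive pantograph recursion into the multiplicative $\alpha$-Riccati recursion, plus the observation that $0\le X\le 1$ makes integrability automatic. The only points requiring mild care are (a) the measurability arguments allowing one to pass from the a.s.\ recursion to the integral equation and to apply Fubini, which mirror the proof of \autoref{X_n-conv}, and (b) confirming that $X^{(1)},X^{(2)}$ are genuine independent copies of $X$, which relies on the built-in independence of $\sX^{(1)}$ and $\sX^{(2)}$.
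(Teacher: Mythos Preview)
Your proof is correct and follows essentially the same approach as the paper: condition on $T_\theta$ to obtain the mild forms \eqref{mildpant} and \eqref{mildalpha}, and in part~(ii) observe that $e^{-\lambda(\cdot)}$ turns the additive recursion into the multiplicative one while boundedness of $X(t)\in(0,1]$ guarantees integrability. Your write-up is in fact more detailed than the paper's, which dispatches the argument in a few lines.
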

\begin{proof}
(i) By conditioning on $T$ in \eqref{1193}, we get 
\[w(t)=\int_0^te^{-s}2w(\alpha(t-s))ds\]
which leads to \eqref{markovsspantographeq}.

(ii) First, apply the function $e^{-\lambda\cdot}$ to both sides of \eqref{1193} to 
show $X(t)=e^{-\lambda\sX(t)}$ satisfies \eqref{alphrecursion}. Note that $u$ is always 
well-defined because $e^{-\lambda\sX(t)}\in[0,1]$. By conditioning on $T$ in 
\eqref{alphrecursion}, we get
\[u(t)=e^{-t}+\int_0^te^{-s}u^2(\alpha(t-s))ds\]
which leads to \eqref{diffalpha-ricc}.
\end{proof}

One can observe from \autoref{transf_pant_to_alph}  that if \eqref{1193} has a solution $\sX(t)\ge 0$, not identically zero, 
then $u_\lambda(t)=\EXP\left( e^{-\lambda\sX(t)}\right)$, $\lambda\ge 0$, is an infinite family of solutions to \eqref{diffalpha-ricc} 
corresponding to $u_0=1$. Thus, our next goal is will construct a solution process $\sX(t)\ge 0$ of \eqref{1193} that is 
not identically zero. The key idea is to use the expected value of the unary solution process given by \autoref{prop11112} 
as the ground state in the stochastic Picard iterations for \eqref{1193}.

\begin{prop}\label{martingalep}
Let $\eta(t)$ be from \autoref{prop11112} with $a=2$. Define $\eta(t)=0$ if $t\le 0$. On the full binary tree $\T$ and with $\Theta_v$ as in \eqref{agedef}, 
define
\begin{equation}\label{sX_fla} 
\sX_n(t)=\sum_{|v|=n-1}2\eta(\alpha^n(t-\Theta_v)),\ \ \ \ \forall\,n\ge 1.
\end{equation}
 Then
\begin{enumerate}[(i)]
\item The sequence $\{\sX_n(t)\}$ satisfies the stochastic Picard iterations \eqref{pant_bin_iter} for the binary pantograph 
process  with ground state $\sX_0(t)=\eta(t)$, corresponding to $a=2$ and $w_0=0$, i.e.
\begin{equation}\label{itrations_bin_pant}
{{\sX}_{n}}(t)=\left\{ \begin{array}{*{35}{l}}
   0 & \text{if} & T_\theta\ge t,  \\
   \mathscr{\sX}_{n-1}^{(1)}(\alpha (t-T_\theta))+\sX_{n-1}^{(2)}(\alpha (t-T_\theta)) & \text{if} & T_\theta<t.  \\
\end{array} \right.
\end{equation}
\item For each $t>0$, $\{\sX_n(t)\}$ is a martingale with respect to the filtration $\F_n=\sigma(T_v:\,|v|\le n)$.
\item The limit $\sX(t)=\lim_{n\to\infty}\mathscr{X}_n(t)$ exists. Moreover, $\EXP(\sX(t))=\eta(t)$, and
for any $\delta\in(1,1/\gamma)$, $\EXP(\sX^\delta(t))\le \frac{\eta_\delta(t)}{2^{\delta}}$, where $\eta_\delta$ 
is given in \autoref{exampleone}.
\end{enumerate}
\end{prop}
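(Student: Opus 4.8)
The plan is to obtain (i) and (ii) by bookkeeping on the binary tree and to concentrate the real effort on (iii).

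\textbf{Part (i).} I would induct on $n$. For $n=1$ the only $v$ with $|v|=0$ is the root, $\Theta_\theta=Y_\theta=T_\theta$, so \eqref{sX_fla} reads $\sX_1(t)=2\eta(\alpha(t-T_\theta))$, which is exactly \eqref{itrations_bin_pant} with ground state $\sX_0=\eta$, once one recalls $\eta\equiv0$ on $(-\infty,0]$ (this also absorbs the case $T_\theta\ge t$). For the step, split $\T$ into the root and its two subtrees $\T_1,\T_2$ and use the shift identity $\Theta_v=T_\theta+\alpha^{-1}\Theta'_v$ valid for every $v\neq\theta$, where $\Theta'_v$ is the replacement time of $v$ in the relabelled subtree containing it. Splitting the sum in \eqref{sX_fla} for $\sX_{n+1}(t)$ according to which subtree the index vertex lies in, and setting $\tau=\alpha(t-T_\theta)$, each partial sum becomes the formula \eqref{sX_fla} for $\sX_n$ at argument $\tau$ built from the clock times of the corresponding subtree; since $T_\theta$, $\{T_v:v\in\T_1\}$ and $\{T_v:v\in\T_2\}$ are independent, these are independent copies $\sX_n^{(1)}(\tau),\sX_n^{(2)}(\tau)$, and \eqref{itrations_bin_pant} follows.

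\textbf{Part (ii).} Measurability of $\sX_n(t)$ is read off \eqref{sX_fla}. Writing $\sX_{n+1}(t)=\sum_{|v|=n}2\eta(\alpha^{n+1}(t-\Theta_v))$ and $\Theta_v=\Theta_{\overleftarrow v}+\alpha^{-n}T_v$ for $|v|=n$, the factor $\Theta_{\overleftarrow v}$ is $\F_n$-measurable while $T_v$ is a mean-one exponential independent of $\F_n$; the substitution property for conditional expectations gives $\EXP(\sX_{n+1}(t)\mid\F_n)=\sum_{|v|=n}2g(\alpha^{n}(t-\Theta_{\overleftarrow v}))$, where $g(\sigma)=\int_0^\infty e^{-r}\eta(\alpha(\sigma-r))\,dr$. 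Since $\eta\equiv0$ on $(-\infty,0]$ and, by \autoref{prop11112}(ii), $\eta$ solves $\eta(\sigma)=2\int_0^\sigma e^{-r}\eta(\alpha(\sigma-r))\,dr$, we have $2g(\sigma)=\eta(\sigma)$; grouping the two children of each $u$ with $|u|=n-1$ (which share the branching time $\Theta_u$), the sum equals $\sum_{|u|=n-1}4g(\alpha^n(t-\Theta_u))=\sum_{|u|=n-1}2\eta(\alpha^n(t-\Theta_u))=\sX_n(t)$. As every term of \eqref{sX_fla} is nonnegative, $\{\sX_n(t)\}$ is a nonnegative martingale.

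\textbf{Part (iii).} By Doob's convergence theorem the nonnegative martingale $\{\sX_n(t)\}$ converges a.s.\ to some $\sX(t)\ge0$. To get $\EXP(\sX(t))=\lim_n\EXP(\sX_n(t))=\eta(t)$ it is enough that $\{\sX_n(t)\}_n$ be uniformly integrable, and I would get this from $L^\delta$-boundedness ($\delta>1$), which also upgrades the convergence to $L^1$. Fix $\delta\in(1,1/\gamma)$; then $2^\delta<\alpha$ since $\delta<1/\gamma=\log_2\alpha$, and by \autoref{exampleone} (with $a=2$, $a'=2^\delta$) the function $\eta_\delta(t)=\EXP[(t-\tS)^{-\gamma\delta}\IND{[\tS<t]}]$ satisfies $\eta_\delta(t)=2^\delta\int_0^t e^{-s}\eta_\delta(\alpha(t-s))\,ds$ and $\eta_\delta(t)\sim t^{-\gamma\delta}$. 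Conditioning \eqref{itrations_bin_pant} on $T_\theta$ and using Minkowski's inequality for the i.i.d.\ nonnegative $\sX_{n-1}^{(1)},\sX_{n-1}^{(2)}$ gives $\EXP(\sX_n(t)^\delta)\le 2^\delta\int_0^t e^{-s}\EXP(\sX_{n-1}(\alpha(t-s))^\delta)\,ds$; together with $\eta^\delta\le C_0\eta_\delta$ (both $\sim t^{-\gamma\delta}$ at infinity, with $\eta^\delta$ decaying faster near $0$) and the fact that $\eta_\delta$ is a fixed point of the operator on the right, an induction yields $\EXP(\sX_n(t)^\delta)\le C_0\eta_\delta(t)$ for all $n$, hence uniform integrability and $\EXP(\sX(t))=\eta(t)$. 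For the sharp constant I would pass to the limit in \eqref{sX_fla}: using $\eta(\tau)/\tau^{-\gamma}\to1$ and $\alpha^{-\gamma}=\tfrac12$ one identifies $\sX_n(t)=(1+o(1))\int_{\partial\T}(t-\Theta_{s|(n-1)})^{-\gamma}\IND{[\Theta_{s|(n-1)}<t]}\,\mu(ds)$, with $\mu$ the uniform product probability measure on $\partial\T$, so $\sX(t)=\int_{\partial\T}(t-\Theta_s)^{-\gamma}\IND{[\Theta_s<t]}\,\mu(ds)$ (with equality at least on $[L<t]$); as $\Theta_s=\sum_{j\ge0}\alpha^{-j}T_{s|j}\stackrel{d}{=}\tS$ for each fixed ray $s$, Jensen's inequality for the probability measure $\mu$ and Tonelli give $\EXP(\sX(t)^\delta)\le\EXP\!\int_{\partial\T}(t-\Theta_s)^{-\gamma\delta}\IND{[\Theta_s<t]}\,\mu(ds)=\eta_\delta(t)$, and retaining the averaging over the two subtrees at each level refines this to $\EXP(\sX(t)^\delta)\le 2^{-\delta}\eta_\delta(t)$.

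\textbf{The hard part} is exactly this limiting step: justifying the boundary-integral identification of $\sX(t)$ uniformly over the $2^{n-1}$ vertices at level $n-1$, and, on the event $[S<t\le L]$, controlling the rays $s$ with $\Theta_s$ just below $t$, where the integrand $(t-\Theta_s)^{-\gamma}$ blows up. I expect this to be managed using the submartingale property of $\{\sX_n(t)^\delta\}$, the exponential tail bound $\P(L\ge r)\le ce^{-r}$ from \autoref{expl_asymptotics}, and Fatou's lemma; it is also the place where the precise multiplicative constant in the $L^\delta$-estimate must be nailed down.
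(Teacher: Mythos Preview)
Your arguments for (i) and (ii) are correct and essentially the paper's; your (ii) is a direct conditional-expectation computation using $\Theta_v=\Theta_{\overleftarrow v}+\alpha^{-n}T_v$ and the integral identity $\eta(\sigma)=2\int_0^\sigma e^{-r}\eta(\alpha(\sigma-r))\,dr$, whereas the paper repeats the inductive substitution-property scheme from \autoref{martingale-a}. One remark: both your argument and the paper's proof implicitly work with $\F_n=\sigma(T_v:|v|\le n-1)$ (the paper treats $\F_0$ as trivial), not the $\F_n=\sigma(T_v:|v|\le n)$ written in the statement; under that reading your computation is valid.

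For (iii), your inductive $L^\delta$ bound is exactly the paper's approach: Minkowski and the paper's inequality $(a+b)^\delta\le 2^{\delta-1}(a^\delta+b^\delta)$ give the identical recursion $\EXP(\sX_n(t)^\delta)\le 2^\delta\int_0^t e^{-s}\EXP(\sX_{n-1}(\alpha(t-s))^\delta)\,ds$, and $\eta_\delta$ is a fixed point of that operator by \autoref{exampleone}. For the base case you should simply apply Jensen to the unary process: $\eta_\delta=\EXP(\tsX_*^\delta)\ge(\EXP\tsX_*)^\delta=\eta^\delta$, i.e.\ $\eta^\delta\le\eta_\delta$ pointwise with constant $1$; your asymptotic comparison is correct but roundabout.

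Your ``hard part'' is a detour you should drop. The paper never identifies $\sX(t)$ as a boundary integral over $\partial\T$; it just passes to the limit in the inductive $L^\delta$ bound via Fatou. In fact the paper's derivation of the constant $2^{-\delta}$ has a slip in the base case: since $\sX_1(t)=2\eta(\alpha(t-T_\theta))$, one gets $\EXP(\sX_1(t)^\delta)=2^\delta\EXP[\eta(\alpha(t-T_\theta))^\delta]\le 2^\delta\cdot\eta_\delta(t)/2^\delta=\eta_\delta(t)$, not $\eta_\delta(t)/2^\delta$, and the induction then preserves the constant $1$. So the bound that actually comes out of this method is $\EXP(\sX_n(t)^\delta)\le\eta_\delta(t)$, which you already have; this is all that is used downstream (\autoref{main2} only needs $\EXP(\sX(t)^\delta)=O(t^{-\gamma\delta})$). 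With Jensen for the base case and Fatou at the end, your proof of (iii) is complete without the boundary-integral identification.
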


\begin{proof}
(i) Note that if the ground state $X_0(t)=0$ for $t\le 0$, the iterative formula \eqref{itrations_bin_pant} can be re-written as
\begin{equation}\label{simpl_pant_iter}
\sX_n(t)=\sX_{n-1}^{(1)}(\alpha(t-T_\theta))+\sX_{n-1}^{(1)}(\alpha(t-T_\theta)),
\end{equation}
since by induction $\sX_n(t)=0$ on $t\in(-\infty,0]$ for all $n$. In case $\sX_0(t)=\eta(t)$, the formula for $\sX_n$ given 
by \eqref{sX_fla} follows from \eqref{simpl_pant_iter} directly from the definition of $\sX_n(t)$ also by induction. Thus,  
$\sX_n$ satisfies \eqref{itrations_bin_pant}.

(ii) We will show by induction on $n$ that $\EXP(\sX_{n+1}(t)|\mathcal{F}_n)=\sX_n(t)$ following the same approach 
as in the proof of \autoref{martingale-a}, part {\em (ii)}. Namely, for $n=1$, 
$\F_0$ is the trivial $\sigma$-field and $\sX_1(t)=2\eta(\alpha(t-T))$, so 
\begin{equation}\label{step_1}
\EXP(\sX_1\,|\,\F_0)=\EXP(\sX_1)=\int_0^t e^{-s} 2 \eta(\alpha(t-s))\, ds=\eta(t)=X_0(t).
\end{equation}
since  by \autoref{prop11112}, $\eta$ satisfies \eqref{markovsspantographeq}.

In the case $n=2$, $\mathcal{F}_1=\sigma(T_\theta)$, and thus
\begin{eqnarray*}
\EXP({{\sX}_{2}}\,|\,{\mathcal{F}_{1}})&=&
\EXP(2\eta ({{\alpha }^{2}}(t-{{\Theta }_{1}}))+2\eta ({{\alpha }^{2}}(t-{{\Theta }_{2}}))\,|\,{{T}_{\theta}})\\
&=&\EXP(2\eta (\alpha (\tau -{{T}_{1}}))+2\eta (\alpha (\tau -{{T}_{2}}))\,|\,{{T}_{\theta}})
\end{eqnarray*}
where $\tau=\alpha(t-T_\theta)$. Thus, 
\begin{eqnarray*}
\EXP({{\sX}_{2}}|{\F_{1}})&=&\EXP(2\eta (\alpha (\tau -{{T}_{1}})+2\eta (\alpha (\tau -{{T}_{2}}))\,|\,{{T}_{\theta}})\\
&=&
\EXP(\sX_1^{(1)}(\tau)\,|\, \tau)+\EXP(\sX_1^{(2)}(\tau)\,|\, \tau)= 2\,\EXP(\sX_1(\tau)\,|\, \tau)=2\sX_0(\tau)=\sX_1(t),
\end{eqnarray*}
where \eqref{step_1} and the substitution property for conditional probability was used in the 2nd to the last equality.

Now suppose $\EXP(\sX_n|\F_{n-1})=\sX_{n-1}$ for some $n\ge 2$. We have
\[\EXP({{\sX}_{n+1}}|{{\F}_{n}})=\EXP(\sX_{n}^{(1)}(\alpha (t-{{T}_{\theta}}))+\sX_{n}^{(2)}(\alpha (t-{{T}_{\theta}}))\,|\,{{\F}_{n}}).\]
Because $\F_n=\sigma(T_0)\vee \F^{(1)}_{n-1} \vee \F^{(2)}_{n-1}$ and that $\F^{(1)}_{n-1}$ and $\F^{(2)}_{n-1}$ 
are independent, using substitution property, we get
\begin{eqnarray*}
\EXP({{\sX}_{n+1}}|{{\F}_{n}})
&=&\EXP(\sX_{n}^{(1)}(\alpha (t-{{T}_{\theta}}))\,|\,{{T}_{\theta}},F_{n-1}^{(1)})+
\EXP(\sX_{n}^{(2)}(\alpha (t-{{T}_{\theta}}))\,|\,{{T}_{\theta}},\F_{n-1}^{(2)})\\
&=&\sX_{n-1}^{(1)}(\alpha (t-{{T}_{\theta}}))+\sX_{n-1}^{(2)}(\alpha (t-{{T}_{\theta}}))
={{\sX}_{n}}(t).
\end{eqnarray*}

(iii) We use the fact that $\sX_n(t)\ge0$ and $\{\sX_n(t)\}_{n\ge 1}$ is a martingale, which implies that for 
any $t>0$, $\sX_n(t)$ is convergent a.s. to some process $\sX(t)$. By \autoref{X_n-conv}, $\sX$ is a solution process satisfying 
\[{{\sX}}(t){\mathop{=}}\,\left\{ \begin{array}{*{35}{l}}
   0 & \text{if} & T_\theta\ge t,  \\
   \sX^{(1)}(\alpha (t-T_\theta))+\sX^{(2)}(\alpha (t-T_\theta)) & \text{if} & T_\theta<t.  \\
\end{array} \right.\]
Also,
\[\EXP({{\sX}_{1}}(t))=\EXP(2\eta (\alpha (t-{{T}_{\theta}})))=\int_{0}^{t}2{\eta (\alpha (t-s)){{e}^{-s}}ds}=\eta (t).\]
By the martingale property, $\EXP(\sX_n(t))=\eta(t)$ for all $n\ge 1$. To show that $\EXP(\sX(t))=\eta(t)$, it suffices 
to show that for each $t>0$, the sequence $\EXP(\sX_n(t)^\delta)$ is bounded from above for some $\delta>1$.

Fix $\delta\in(1,1/\gamma)$. Let $\eta_\delta$ be the function from \autoref{exampleone}, i.e. 
$\eta_\delta(t)=\EXP(\tsX_*^{\delta}(t))$ with $\tsX_*=(t-\tS)^{-\gamma}\IND{[\tS<t]}$
(recall, $\eta(t)=\EXP(\tsX_*(t))$\,).
By Jensen's inequality,
\[\eta_\delta (t)\ge ( \EXP( \tsX_* ) )^{\delta} =\eta (t)^{\delta }.\]
We will show by induction on $n\ge 1$ that $\EXP(\sX_n(t)^\delta)\le\frac{\eta_\delta(t)}{2^\delta}$. For $n=1$,
\begin{eqnarray*}
\EXP({{\sX}_{1}}{{(t)}^{\delta }})&=&\EXP(\eta {{(\alpha (t-T_\theta))}^{\delta }})\le 
\EXP[{{\eta }_{\delta }}(\alpha (t-T_\theta)))\\
&=&\int_{0}^{t}{{{e}^{-s}}{{\eta }_{\delta }}(\alpha (t-s))ds}=\frac{{{\eta }_{\delta }}(t)}{{{2}^{\delta }}}
\end{eqnarray*}
according to \autoref{exampleone}. Suppose $\EXP(\sX_{n-1}(t)^\delta)\le\frac{\eta_\delta(t)}{2^\delta}$ for some $n\ge 2$. 
Using the inequality $(a+b)^\delta\le 2^{\delta-1}(a^\delta+b^\delta)$, we have
\[\mathscr{X}_n(t)^\delta\le\left\{ \begin{array}{*{35}{l}}
   0 & \text{if} & T_\theta\ge t,  \\
   2^{\delta-1}\left(\sX_{n-1}^{(1)}(\alpha (t-T_\theta))^\delta+\sX_{n-1}^{(2)}(\alpha (t-T_\theta))^\delta\right) & \text{if} & T_\theta<t.  \\
\end{array} \right.\]
Thus,
\begin{eqnarray}
\EXP({{\sX}_{n}}{{(t)}^{\delta }})&\le& 
{{2}^{\delta -1}}\int_{0}^{t}{{{e}^{-s}}2\,\EXP({{\sX}_{n-1}}{{(\alpha (t-s))}^{\delta }})ds}
={{2}^{\delta -1}}\int_{0}^{t}{{{e}^{-s}}2\frac{{{\eta }_{\delta }}(\alpha (t-s))}{{{2}^{\delta }}}ds}\nonumber\\
\label{11133}&=&\int_{0}^{t}{{{e}^{-s}}{{\eta }_{\delta }}(\alpha (t-s))ds}=\frac{{{\eta }_{\delta }}(t)}{{{2}^{\delta }}}.
\end{eqnarray}
Thus, the sequence $\sX_n$ is uniformly integrable and so $\EXP(\sX(t))=\lim_{n\to\infty}\EXP(\sX_n(t))=\eta(t)$, 
while the inequality for $\EXP(\sX(t)^{\delta})$ from part {\em (iii)} follows from \eqref{11133} by taking $n\to\infty$ 
and applying Fatou's Lemma.
\end{proof}

To finish the proof of \autoref{main} in the case $\alpha>2$, $u_0=1$, we will need the 
following lemma.

\begin{lem}\label{lem1113}
For each $\delta\in(1,2)$, there exists $c_\delta>0$ such that
\[1-e^{-x}\ge x-c_\delta x^\delta\ \ \ \forall\,x>0\]
\end{lem}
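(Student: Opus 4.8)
The plan is to recast the inequality so that only elementary calculus is needed. Write $g(x)=x-1+e^{-x}$, so that the claimed estimate $1-e^{-x}\ge x-c_\delta x^\delta$ is exactly $g(x)\le c_\delta x^\delta$ for all $x>0$. Since $g(0)=0$ and $g'(x)=1-e^{-x}\ge 0$ for $x\ge 0$, the function $g$ is nonnegative and nondecreasing on $[0,\infty)$; in particular $g(x)=x-(1-e^{-x})\le x$ for all $x\ge 0$.

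The one genuine ingredient is the quadratic bound $g(x)\le x^2/2$ for all $x\ge 0$. To obtain it, set $\phi(x)=x^2/2-g(x)$; then $\phi(0)=0$ and $\phi'(x)=x-(1-e^{-x})=g(x)\ge 0$, so $\phi\ge 0$ on $[0,\infty)$, which is the claim. This is the only place any work is required, and it is just the monotonicity observation above applied once more.

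With the two bounds $g(x)\le x$ and $g(x)\le x^2/2$ in hand I would split at $x=1$. On $(0,1]$, using $2-\delta>0$ and $x\le 1$, one has $g(x)\le x^2/2=\tfrac12 x^{2-\delta}\,x^\delta\le\tfrac12 x^\delta$. On $[1,\infty)$, using $\delta\ge 1$ and $x\ge 1$, one has $g(x)\le x\le x^\delta$. Hence $g(x)\le x^\delta$ for every $x>0$, so the lemma holds with $c_\delta=1$ (indeed any $c_\delta\ge 1$ works), which is in particular positive, completing the argument.

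I do not anticipate a real obstacle: the statement is a soft comparison of elementary functions, and the proof uses nothing beyond the signs of $g$ and of $x^2/2-g$ together with the trivial fact $x^{2-\delta}\le 1$ on $(0,1]$. Should one prefer to avoid the case split, an alternative is to note that $h(x):=g(x)/x^\delta$ is continuous and strictly positive on $(0,\infty)$, with $h(x)\sim x^{2-\delta}/2\to 0$ as $x\to 0^+$ (since $\delta<2$) and $h(x)\sim x^{1-\delta}\to 0$ as $x\to\infty$ (since $\delta>1$), so $c_\delta:=\sup_{x>0}h(x)$ is finite and positive; but this yields a less explicit constant and is not needed for the application.
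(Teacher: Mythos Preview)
Your proof is correct. The reformulation $g(x)=x-1+e^{-x}\le c_\delta x^\delta$ is clean, the quadratic bound $g(x)\le x^2/2$ via $\phi'(x)=g(x)\ge 0$ is valid, and the case split at $x=1$ using $g(x)\le x^2/2$ on $(0,1]$ and $g(x)\le x$ on $[1,\infty)$ gives $g(x)\le x^\delta$ with the explicit constant $c_\delta=1$.

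The paper takes a different route: it sets $f(x)=1-e^{-x}-(x-cx^\delta)$ and argues that for $c$ large enough one has $f''(x)=-e^{-x}+c\delta(\delta-1)x^{\delta-2}>0$ on $(0,\infty)$, whence $f'(x)\ge f'(0)=0$ and then $f(x)\ge f(0)=0$. This convexity argument is short but requires observing that $e^{-x}x^{2-\delta}$ is bounded on $(0,\infty)$ to justify the choice of $c$, and it yields a non-explicit (and $\delta$-dependent) constant. Your approach is more elementary, avoids the second derivative entirely, and produces the uniform explicit value $c_\delta=1$; the paper's approach has the minor advantage of being a one-step monotonicity argument once $c$ is fixed. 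Either is perfectly adequate for the application in \autoref{main2}, where only the existence of some $c_\delta>0$ is used.
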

\begin{proof}
For a fixed $c>0$, let $f(x)=10e^{-x}-(x-cx^\delta)$. Then
\begin{eqnarray*}
f'(x)&=&e^{-x}-1+c\delta x^{\delta-1}\\
f''(x)&=&-e^{-x}+c\delta(\delta-1)x^{\delta-2}
\end{eqnarray*}
Since $\delta\in(1,2)$, one can choose $c$ sufficiently large such that $f''(x)>0$ for all $x>0$. 
Then $f'(x)\ge f'(0)=0$ 
for all $x>0$. Then $f(x)\ge f(0)=0$ for all $x>0$.
\end{proof}

We can now finish the proof of \autoref{main} in the case $\alpha>2$, $u_0=1$ by showing the following.

\begin{thm}\label{main2}
Let $\sX(t)$ be given by \autoref{martingalep}, part {\em (iii)}. For each $\lambda\ge 0$, the process 
$X_\lambda(t)=e^{-\lambda\sX(t)}$ is a solution process satisfying \eqref{alphrecursion} with 
$u_0=1$, while the function $u_\lambda(t)=\EXP(X_{\lambda}(t))$ solves \eqref{diffalpha-ricc}. 
Moreover, for $\lambda>0$,
\[\lim_{t\to\infty}\frac{1-u_{\lambda}(t)}{t^{-\gamma}}={\lambda}.\]
\end{thm}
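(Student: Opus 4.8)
The first half of the statement requires no new work: the process $\sX$ constructed in \autoref{martingalep}, part (iii), is a nonnegative binary-tree solution process of \eqref{markovsspantographeq} in the sense of \eqref{1193}, so \autoref{transf_pant_to_alph}(ii) applies directly and tells us that $X_\lambda(t)=e^{-\lambda\sX(t)}$ satisfies \eqref{alphrecursion} with $u_0=1$ and that $u_\lambda(t)=\EXP(X_\lambda(t))$ solves \eqref{diffalpha-ricc}; integrability is free since $0\le e^{-\lambda\sX(t)}\le 1$ for $\lambda\ge 0$ and $\sX\ge 0$. So the plan is to spend all the effort on the asymptotic relation, which I would obtain by sandwiching $1-u_\lambda(t)=\EXP\bigl(1-e^{-\lambda\sX(t)}\bigr)$ between an elementary upper and lower bound on $1-e^{-x}$ and then feeding in the known asymptotics $\eta(t)\sim t^{-\gamma}$ and $\eta_\delta(t)\sim t^{-\gamma\delta}$ from \autoref{prop11112} and \autoref{exampleone} (both read off with $a=2$, so that $\gamma_a=\gamma$).

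For the upper bound I would use $1-e^{-x}\le x$, so that $1-u_\lambda(t)\le\lambda\,\EXP(\sX(t))=\lambda\,\eta(t)$ by \autoref{martingalep}, part (iii); dividing by $t^{-\gamma}$ and letting $t\to\infty$ gives $\limsup_{t\to\infty}(1-u_\lambda(t))/t^{-\gamma}\le\lambda$ using \autoref{prop11112}(iii).

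For the lower bound I would invoke \autoref{lem1113}: choose $\delta$ in the open interval $\bigl(1,\min\{2,1/\gamma\}\bigr)$, which is nonempty precisely because $\alpha>2$ makes $\gamma=\log_\alpha 2<1$, hence $1/\gamma>1$. Then $\delta<2$ licenses $1-e^{-\lambda\sX(t)}\ge\lambda\sX(t)-c_\delta\lambda^\delta\sX(t)^\delta$, and taking expectations together with the moment bound $\EXP(\sX(t)^\delta)\le 2^{-\delta}\eta_\delta(t)$ from \autoref{martingalep}, part (iii) (legitimate since $\delta<1/\gamma$), yields
\[
1-u_\lambda(t)\ \ge\ \lambda\,\eta(t)-\frac{c_\delta\lambda^\delta}{2^\delta}\,\eta_\delta(t).
\]
Dividing by $t^{-\gamma}$, the first term tends to $\lambda$ as before; for the second I would write $\eta_\delta(t)/t^{-\gamma}=\bigl(\eta_\delta(t)/t^{-\gamma\delta}\bigr)\,t^{-\gamma(\delta-1)}$, whose first factor tends to $1$ by \autoref{exampleone} and whose second factor tends to $0$ because $\gamma(\delta-1)>0$. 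Hence $\liminf_{t\to\infty}(1-u_\lambda(t))/t^{-\gamma}\ge\lambda$, and combined with the upper bound this gives the claimed limit.

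I do not expect a serious obstacle; the one point to get right is that the auxiliary exponent $\delta$ must simultaneously satisfy $\delta<2$ (for \autoref{lem1113}) and $\delta<1/\gamma$ (for the moment estimate of \autoref{martingalep}), and the existence of such $\delta$ is exactly what the hypothesis $\alpha>2$, equivalently $\gamma<1$, buys. The distinctness of the $u_\lambda$ for distinct $\lambda\ge 0$ is then immediate from their different boundary behaviors at $t=\infty$.
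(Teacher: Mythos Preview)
Your proposal is correct and follows essentially the same approach as the paper: invoke \autoref{transf_pant_to_alph} for the first assertion, use $1-e^{-x}\le x$ together with $\EXP(\sX(t))=\eta(t)$ for the $\limsup$ bound, and use \autoref{lem1113} together with the moment estimate $\EXP(\sX(t)^\delta)\le 2^{-\delta}\eta_\delta(t)$ from \autoref{martingalep}(iii) for the $\liminf$ bound, with $\delta$ chosen in $(1,\min\{2,1/\gamma\})$. Your explicit remark that the nonemptiness of this interval is exactly where the hypothesis $\alpha>2$ enters is a nice clarification.
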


\begin{proof}
The fact that with $u_0=1$,  $X_\lambda(t)$ satisfies  \eqref{alphrecursion} and $u_\lambda(t)$ 
solves \eqref{diffalpha-ricc} comes directly from \autoref{transf_pant_to_alph}. We only need 
to show the convergence rate. On one hand,
\[
1-u_\lambda(t)=\EXP(1-e^{-\lambda\sX(t)})\le \EXP(\lambda\sX(t))=\lambda\EXP(\sX(t))
={\lambda}\eta(t).
\]
Then, by \autoref{prop11112},
\[\underset{t\to \infty }{\mathop{\lim \sup }}\,\frac{1-{{u}_{\lambda }}(t)}{{{t}^{-\gamma }}}
=\underset{t\to \infty }{\mathop{\lim \sup }}\,\frac{\eta (t)}{{{t}^{-\gamma }}}={\lambda }.\]
It remains to show that 
\[
\underset{t\to \infty }{\mathop{\lim \inf }}\,\frac{1-{{u}_{\lambda }}(t)}{{{t}^{-\gamma }}}\ge {\lambda}.
\]
Since $\gamma\in(0,1)$, there exists $\delta\in(1,2)$ such that $\gamma\delta\in(0,1)$. 
By \autoref{lem1113}, there exists a constant $c>0$ such $1-e^{-x}\ge x-cx^\delta$ for all $x\ge 0$. 
Thus,
\begin{eqnarray}
\nonumber 1-{{u}_{\lambda }}(t)&=&\mathbb{E}[1-{{e}^{-\lambda \mathscr{X}(t)}}]\ge 
\EXP(\lambda \mathscr{X}(t)-{{\lambda }^{\delta }}\sX{{(t)}^{\delta }})\\
\nonumber&=&\lambda \EXP(\sX(t))-{{\lambda }^{\delta }}\EXP(\sX{{(t)}^{\delta }})\\
\label{11134}&=&{\lambda }\eta (t)-{{\lambda }^{\delta }}\EXP(\sX{{(t)}^{\delta }})\,.
\end{eqnarray}
By \eqref{11132} and \autoref{martingalep}, part {\em (iii)}, we have
\[\EXP({{\sX}_{n}}{{(t)}^{\delta }})\le \frac{{{\eta }_{\delta }}(t)}{{{2}^{\delta }}}\le 
C{{t}^{-\gamma \delta }}\ \ \ \forall t>0,\,n\in \mathbb{N}.\]
Substituting this estimate into \eqref{11134}, we get
\[\underset{n\to \infty }{\mathop{\lim \inf }}\,\frac{1-{{u}_{\lambda }}(t)}{{{t}^{-\gamma }}}
\ge \underset{n\to \infty }{\mathop{\lim \inf }}\,
\left( {\lambda }\frac{\eta (t)}{{{t}^{-\gamma }}}-C\frac{{{t}^{-\gamma \delta }}}{{{t}^{-\gamma }}} \right)
={\lambda }\underset{n\to \infty }{\mathop{\lim }}\,\frac{\eta (t)}{{{t}^{-\gamma }}}={\lambda }.\]
This completes the proof.
\end{proof}

\subsection{Maximal solution process and the proof of \autoref{main} for 
\texorpdfstring{$u_0\in R_\alpha$}{}}\label{generalinitialdata}

The proof of the general form of the theorem uses the transformation \eqref{trans3}. That is to combine the solution processes obtained in 
the previous subsections (the case $u_0=1$) with a particular solution, chosen to be the maximal solution 
process defined in \eqref{max_sol_proc} (see also \autoref{alphgrndstateprop}), to construct solution processes for the range $R_\alpha$ of initial data.  This serves as an illustration of the use of solution processes to combine solutions with different initial data, an analog of which at the level of differential equations is highly nontrivial.

From 
\cite{dascaliuc2023errata}*{Prop.\ 2.2}, \cite{alphariccati}*{Prop.\ 4.1}, for $u_0\ge 0$, the stochastic 
Picard iterations of the solution processes defined in \eqref{X_n-eq} with ground state $X_0\equiv 1$
converges a.s.\ to the maximal solution process $\uprX_{u_0}(t)$ given by \eqref{max_sol_proc}. 
Moreover, if $u_0\in R_\alpha$ then $\overline{u}(t)=\EXP(\uprX_{u_0}(t))<\infty$ is 
a solution to \eqref{diffalpha-ricc}.

For $\lambda>0$, let $X_{M,\delta,\lambda}(t)$ be the process defined in \autoref{main1} 
for $\alpha\in(1,2]$, and $X_\lambda(t)$ the process defined in 
\autoref{main2} for $\alpha>2$. Let us combine the notations as follows
\[
X_\lambda^*=\left\{ \begin{array}{*{35}{r}}
   X_{M,\delta,\lambda}(t)    &\quad\ \textup{if} & 1<\alpha \le 2,  \\
   X_\lambda(t) & \textup{if} & \alpha >2.  \\
\end{array} \right.\]
By construction, 
$X_\lambda^*$ satisfies  \eqref{alphrecursion} with $u_0=1$.
By \autoref{main1} and \autoref{main2}, for each $\lambda>0$, the function 
$u^*_\lambda(t)=\EXP(X_\lambda^*)$
satisfies
\[\lim_{t\to\infty}\frac{1-u^*_\lambda(t)}{t^{-\gamma}}={\lambda},\]
where $\gamma>0$ is given by \eqref{gma}.

For $u_0\in R_\alpha$, 
define $X_{u_0,\lambda}(t)\equiv \uprX_{u_0}(t)X_\lambda^*(t)$. 
Since both $\uprX_{u_0}(t)$ and $X_\lambda^*(t)$ satisfy \eqref{alphrecursion} 
and are defined on the same probability space, so does $X_{u_0,\lambda}(t)$. That is,
\[X_{u_0,\lambda}(t)=\left\{ \begin{array}{*{35}{l}}
   u_0 & \text{if} & T_\theta\ge t,  \\
   X_{u_0,\lambda}^{(1)}(\alpha (t-T_\theta))X_{u_0,\lambda}^{(2)}(\alpha (t-T_\theta)) & \text{if} & T_\theta<t.
\end{array} \right.\]

Thus, $u_{u_0,\lambda}(t)=\EXP (X_{u_0,\lambda}(t))$ satisfies \eqref{diffalpha-ricc} and 
$0\le u_{u_0,\lambda}(t)\le \overline{u}(t)$. Since $\uprX_{u_0}(t)=1$ on the event $[L<t]$,
\begin{eqnarray*}
u_{u_0,\lambda}(t)=\EXP(\uprX_{u_0}(t)X^*_\lambda(t))
&=&\EXP(X_\lambda^*(t))-\EXP((1-\uprX_{u_0})X_\lambda^*(t))\\
&=&\EXP(X_\lambda^*(t))-\EXP((1-\uprX_{u_0})X_\lambda^*(t)\IND{[L>t]})
\end{eqnarray*}
and
\[
\frac{1-u_{u_0,\lambda}(t)}{t^{-\gamma}}=
\frac{1-u^*_\lambda(t)}{t^{-\gamma}}
+t^{\gamma}\EXP((1-\uprX_{u_0})X_\lambda^*(t)\IND{[L>t]}).
\]
In the case $u_0\in R_\alpha\cap(1,\infty)$, one has  $\alpha>5/2$. Then it follows from \cite{alphariccati}*{Remark 3.3 and Thm 4.2} that
$\overline{u}(t)=\EXP(\uprX_{u_0}(t))\le 1+ce^{-t}$. Since we also have $\uprXu(t)\ge 1$ when $u_0>1$, we conclude that 
\[
0\le\EXP((\uprX_{u_0}-1)X_\lambda^*(t)\IND{[L>t]})\le \EXP(\uprXu-1)=\overline{u}(t)-1\le c e^{-t}. 
\]
In the case  $u_0\in R_\alpha\cap[0,1]$, 
$\overline{u}(t)= \EXP(\uprXu(t))\in[0,1]$ by \eqref{max_sol_proc}. Then
\[0\le \EXP((1-\uprX_{u_0})X_\lambda^*(t)\IND{[L>t]})\le t^\gamma\P(L>t)\le C e^{-t}.\]
In both cases,
\[
t^{\gamma}\EXP((1-\uprX_{u_0})X_\lambda^*(t)\IND{[L>t]})\to 0,\quad\mbox{as $t\to\infty$}.
\]
Therefore, 
\[\frac{1-u_{u_0,\lambda}(t)}{t^{-\gamma}}={\lambda},\]
which completes the proof of \autoref{main}.


\subsection{Alternative proof of \autoref{main} for \texorpdfstring{$u_0=0$}{}}\label{lastcase}
In the case $u_0=0$, \eqref{diffalpha-ricc} has a minimal solution $u\equiv 0$ and a maximal solution 
$u(t)=\EXP\overline{X}(t)$. Athreya \cite{athreya}*{Thm 2} uses the Picard iteration 
\[u_{(0)}(t)=e^{-t^{-\gamma}},\ \ \ u_{(n)}(t)=\int_0^te^{-(t-s)}u^2_{(n-1)}(\alpha s)ds\]
to derive a third solution to \eqref{diffalpha-ricc}. He shows that the limit function $u(t)=\lim_{n\to\infty}\,u_{(n)}(t)$ 
satisfies $\liminf_{t\to\infty} t^\gamma(1-u(t))\ge 1$. Below, we will show that one can use the stochastic Picard 
iterations with the ground state 
\[\mu(t)=e^{-t^{-\gamma}}\IND{t>0}\] 
to generate {\em infinitely many} solutions to \eqref{diffalpha-ricc}.
\begin{prop}
The sequence of stochastic processes
\[X_n(t)=\prod_{|v|=n-1}\mu^2(\alpha^n(t-\Theta_v)),\ \ \ \ \forall\,n\ge 1.\]
converges almost surely to a solution process $X(t)$ as $n\to\infty$. Moreover, $u_\lambda(t)=\EXP(X(t)^\lambda)$ 
satisfies \eqref{diffalpha-ricc} and
\[\lim_{t\to\infty}\frac{1-u_\lambda(t)}{t^{-\gamma}}={\lambda}.\]
\end{prop}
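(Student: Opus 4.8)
The plan is to follow the template of \autoref{martingale-a}--\autoref{main1} (the case $u_0=1$, $1<\alpha\le 2$), with the piecewise ground state $\rho_{M,\delta}$ replaced by the \emph{self-similar} ground state $\mu(t)=e^{-t^{-\gamma}}\IND{t>0}$. The structural fact that makes this work — and the reason $\gamma$ is taken to be $\log_\alpha 2$ — is the scaling identity $\mu^2(\alpha r)=\mu(r)$ for all $r\in\mathbb{R}$: for $r>0$ one has $\mu^2(\alpha r)=\exp(-2\alpha^{-\gamma}r^{-\gamma})=\exp(-r^{-\gamma})$ since $\alpha^{\gamma}=2$, and both sides vanish for $r\le 0$. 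Iterating, $\mu^2(\alpha^{n}(t-\Theta_v))=\exp\!\big(-2^{-(n-1)}(t-\Theta_v)^{-\gamma}\big)$ whenever $t-\Theta_v>0$, with $\Theta_v$ as in \eqref{agedef}.

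First I would verify, by induction on $n$ using $\mu(\tau)=0$ for $\tau\le 0$, that the processes $X_n$ in the statement are exactly the stochastic Picard iterations \eqref{X_n-eq} with ground state $X_0=\mu$ and $u_0=0$ (the branch $T_\theta\ge t$ is automatic, since then $t-\Theta_v\le 0$ for every $v$ occurring, so the product is $0$). Next, the supersolution inequality $F[\mu]\le\mu$, with $F[u](t)=\int_0^t e^{-s}u^2(\alpha(t-s))\,ds$ (here $u_0=0$), follows cheaply from the identity above: after the substitution $r=t-s$ and using $\mu^2(\alpha r)=\mu(r)$ it reads $e^{-t}\int_0^t e^r\mu(r)\,dr\le\mu(t)$, and since $\tfrac{d}{dt}\big(e^t\mu(t)-\int_0^t e^r\mu(r)\,dr\big)=e^t\mu'(t)>0$ with value $0$ at $t=0$, the inequality holds (strictly for $t>0$). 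With $F[\mu]\le\mu$ in hand, the same conditioning and substitution-property argument as in \autoref{martingale-a}(ii) shows that for each $t>0$ the sequence $\{X_n(t)\}$ is a nonnegative supermartingale for $\mathcal{F}_n=\sigma(T_v:|v|\le n-1)$, bounded by $\mu(t)\le 1$; Doob's theorem gives an a.s.\ limit $X(t)\in[0,1]$, and \autoref{X_n-conv} then yields that $X$ is a solution process for \eqref{alphrecursion} with $u_0=0$. Raising \eqref{alphrecursion} to the power $\lambda>0$ (using $0^{\lambda}=0$) shows $X^{\lambda}$ is again a solution process with $u_0=0$; since $0\le X^{\lambda}\le 1$, the function $u_\lambda=\EXP(X^{\lambda})$ is finite, and conditioning on $T_\theta$ in the recursion gives that $u_\lambda$ satisfies \eqref{mildalpha}, hence \eqref{diffalpha-ricc}.

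The asymptotics \eqref{u_lambda_ass} are then obtained exactly as in \autoref{non-indicator}(ii) and \autoref{main1}(i). On the hyperexplosion event $[L<t]$ one has $t-\Theta_v\ge t-L>0$ for all $v$, so, writing $S_k=\min_{|v|=k}\Theta_v$, $L_k=\max_{|v|=k}\Theta_v$ and averaging over the $2^{n-1}$ vertices of level $n-1$,
\[
e^{-(t-L_{n-1})^{-\gamma}}\,\IND{[L<t]}\ \le\ X_n(t)\,\IND{[L<t]}\ \le\ e^{-(t-S_{n-1})^{-\gamma}}\,\IND{[L<t]}.
\]
Since $L_n\nearrow L$ and $S_n\nearrow S$ a.s.\ (the former is immediate; the latter follows from a König's-lemma compactness argument on the locally finite tree), letting $n\to\infty$ gives $e^{-(t-L)^{-\gamma}}\le X(t)\le e^{-(t-S)^{-\gamma}}$ on $[L<t]$; in particular $0<X(t)<1$ there, and since $\P(L<t)>0$ for all $t>0$ by \autoref{contdist} this guarantees the $u_\lambda$ are genuinely distinct. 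Raising the sandwich to the power $\lambda$ and splitting $1-u_\lambda(t)=\EXP\big((1-X^{\lambda})\IND{[L<t]}\big)+\EXP\big((1-X^{\lambda})\IND{[L\ge t]}\big)$, the lower bound follows from Fatou's lemma applied to $t^{\gamma}(1-e^{-\lambda(t-S)^{-\gamma}})\IND{[L<t]}$ (which tends a.s.\ to $\lambda$, using $\IND{[L<t]}\to 1$ by hyperexplosion), while the upper bound uses $1-e^{-x}\le x$ restricted to $[L<\epsilon t]$, together with $\P(L\ge\epsilon t)\le Ce^{-\epsilon t}$ from \autoref{expl_asymptotics}, to get $\limsup_{t\to\infty}t^{\gamma}(1-u_\lambda(t))\le\lambda(1-\epsilon)^{-\gamma}$; letting $\epsilon\downarrow 0$ completes the proof.

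I expect the only genuinely new ingredient is the self-similar ground state $\mu$ and the identity $\mu^2(\alpha r)=\mu(r)$; everything downstream is a near-verbatim adaptation of the subcritical argument of \autoref{Sec6}. The mildest technical points to be careful about are the a.s.\ convergence $S_n\to S$ (since $S_n$ is only trivially monotone) — handled by König's lemma — and checking that the measurability/Fubini bookkeeping behind \autoref{X_n-conv} applies to this ground state, which it does since $\mu$ is bounded and measurable.
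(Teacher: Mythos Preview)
Your proof is correct. The one substantive difference from the paper is how you obtain the a.s.\ convergence of $X_n(t)$: you establish the supersolution inequality $F[\mu]\le\mu$ (via the scaling identity $\mu^2(\alpha r)=\mu(r)$ and the monotonicity of $\mu$) and then invoke the supermartingale machinery of \autoref{martingale-a}, whereas the paper proceeds more directly by rewriting $X_n(t)=e^{-M_n(t)}\IND{[L_n<t]}$ with $M_n(t)=2^{-(n-1)}\sum_{|v|=n-1}(t-\Theta_v)^{-\gamma}$ and observing that on $[L<t]$ the sequence $M_n(t)$ is \emph{pointwise} increasing (since $(t-\Theta_v-\alpha^{-n}T_{vi})^{-\gamma}>(t-\Theta_v)^{-\gamma}$), so $X_n$ is pointwise decreasing there and trivially zero on $[L>t]$ for large $n$. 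The paper's route is slightly more elementary for this particular ground state (no Doob, no $F[\mu]\le\mu$ needed), while yours has the virtue of slotting uniformly into the framework already built in \autoref{firstcase}. Downstream --- the sandwich $e^{-(t-L)^{-\gamma}}\le X(t)\le e^{-(t-S)^{-\gamma}}$ on $[L<t]$ and the Fatou/dominated-convergence asymptotics --- the two arguments are essentially identical.

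One minor remark: you do not actually need K\"onig's lemma for $S_n\to S$. Since $S_n$ is increasing with limit $\sigma\le S$, and for every ray $s$ one has $\Theta_{s|n}\ge S_n$, letting $n\to\infty$ gives $\Theta_s\ge\sigma$; taking the infimum over $s$ yields $S\ge\sigma$, hence $\sigma=S$.
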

\begin{proof}
Following the the approach form the proof of \autoref{martingale-a} One can rewrite $X_n(t)$ as 
\begin{eqnarray*}X_n(t)&=&\prod_{|v|=n-1}\exp\left(-2(\alpha^n(t-\Theta_v))^{-\gamma}\right)\IND{[L_n<t]}\\
&=&\prod_{|v|=n-1}\exp\left(-\frac{(t-\Theta_v)^{-\gamma}}{2^{n-1}}\right)\IND{[L_n<t]}=e^{-M_n(t)}\IND{[L_n<t]},
\end{eqnarray*}
where \[M_n(t)=\frac{1}{2^{n-1}}\sum_{|v|=n-1}(t-\Theta_v)^{-\gamma}, ~~~L_n = \sup_{|v|=n} \sum_{j=0}^n \frac{T_{v|j}}{\alpha^j}.\]
On the event $[L>t]$, $X_n(t)=0$ for sufficiently large $n$. On the event $[L<t]$, 
\begin{eqnarray*}
{{M}_{n+1}}(t)&=&\frac{1}{{{2}^{n}}}\sum\limits_{|w|=n}{{{(t-{{\Theta }_{w}})}^{-\gamma }}}\\
&=&\frac{1}{{{2}^{n}}}\sum\limits_{|v|=n-1}{\left( {{(t-{{\Theta }_{v}}-{{\alpha }^{-n}}{{T}_{v1}})}^{-\gamma }}
+{{(t-{{\Theta }_{v}}-{{\alpha }^{-n}}{{T}_{v2}})}^{-\gamma }} \right)}\\
&>&\frac{1}{{{2}^{n}}}\sum\limits_{|v|=n-1}{\left( {{(t-{{\Theta }_{v}})}^{-\gamma }}+{{(t-{{\Theta }_{v}})}^{-\gamma }} \right)}\\
&=&\frac{1}{{{2}^{n-1}}}\sum\limits_{|v|=n-1}{{{(t-{{\Theta }_{v}})}^{-\gamma }}}={{M}_{n}}(t).
\end{eqnarray*}
Thus, the sequence $M_n(t)$ is increasing on the event $[L<t]$. Thus,  a limit $M(t)=\lim_{n\to\infty} M_n(t)$
exists almost surely. Then, $X_n(t)$ converges almost surely to $X(t)=e^{-M(t)}\IND{[L<t]}$. Note that
\[\frac{1}{2^{n-1}}\sum_{|v|=n-1}(t-S_n)^{-\gamma}\le M_n(t)\le\frac{1}{2^{n-1}}\sum_{|v|=n-1}(t-L_n)^{-\gamma}.\]
In other words,
\[{(t-S_n)^{-\gamma}}\le M_n(t)\le {(t-L_n)^{-\gamma}}\]
Thus,
\begin{equation}\label{rate}{(t-S)^{-\gamma}}\IND{[L<t]}\le M(t)\IND{[L<t]}\le {(t-L)^{-\gamma}}\IND{[L<t]}.\end{equation}
Consequently, $0<X(t)<1$ on the event $[L<t]$. As in the proof of \autoref{martingale-a}, we have that $X_n(t)$ are 
stochastic Picard iterations of $X_0(t)=\mu(t)$, i.e.
\[{X_{n}}(t)=\left\{ \begin{array}{*{35}{l}}
   0 & \text{if} & T_\theta\ge t,  \\
    X_{n-1}^{(1)}(\alpha (t-T_\theta)) X_{n-1}^{(2)}(\alpha (t-T_\theta)) & \text{if} & T_\theta<t.  
\end{array} \right.\]
where $ X_{n-1}^{(1)}$ and $ X_{n-1}^{(2)}$ are conditionally on $T_\theta$ i.i.d.\ copies of $ X_{n-1}$. Thus,
\[{X}(t)=\left\{ \begin{array}{*{35}{l}}
   0 & \text{if} & T_\theta\ge t,  \\
    X^{(1)}(\alpha (t-T_\theta)) X^{(2)}(\alpha (t-T_\theta)) & \text{if} & T_\theta<t.  
\end{array} \right.\]
where $ X^{(1)}$ and $ X^{(2)}$ are i.i.d.\ copies of $ X$. For each $\lambda>0$, $u_\lambda(t)=\EXP(X(t)^\lambda)$ 
solves \eqref{diffalpha-ricc}. From here, one can use the same estimating technique used in the proof 
of \autoref{non-indicator} (ii) to show that
\[\lim_{t\to\infty}\frac{1-u_\lambda(t)}{t^{-\gamma}}={\lambda}.\]
\end{proof}


\subsection{The asymptotic behavior of $u_\lambda$ as $\lambda\to 0$}\label{lambda-asy}
Consider $u_0=1$. Recall that in \autoref{firstcase} and \ref{alpha_bgr_2_case}, we constructed the one-parameter family of solutions $\{u_\lambda\}_{\lambda>0}$ in \autoref{main} as $u_\lambda(t)=\EXP[X(t)^\lambda]$
where $\{X(t)\}_{t\ge 0}$ is a stochastic solution process (see \eqref{alphrecursion}) satisfying $0\le X(t)\le 1$ almost surely and $0<X(t)<1$ on the $t$-hyperexplosion event $[L<t]$. It is clear that for each $t>0$, $u_\lambda(t)$ is strictly decreasing with respect to $\lambda>0$. Denote
\begin{equation}\label{ustar}u_*(t)=\lim_{\lambda\downarrow 0}u_\lambda(t)=\EXP\IND{[X(t)>0]}=\P(X(t)>0).\end{equation}
It is an interesting problem to determine whether this limiting profile is the constant 1, and if not, to find the convergence rate of $u_*(t)\to 1$ as $t\to\infty$. Note that the convergence rate \eqref{u_lambda_ass} of $u_\lambda$ implies that
\begin{equation}\label{limitprofile}\lim_{t\to\infty}\frac{1-u_*(t)}{t^{-\gamma}}=0.\end{equation}
In the case $\alpha\in(1,2)$, it was shown in \cite{dascaliuc2023errata} that 
\begin{equation}\label{utilde}\tilde{u}(t)=\P(|\partial V(t)|<\infty)\end{equation} 
is a non-constant solution to \eqref{diffalpha-ricc} with $u_0=1$. This solution has an exponential convergence rate as $t\to\infty$ since
\[1-\tilde{u}(t)=\P(|\partial V(t)|=\infty)\le\P(L>t)\le Ce^{-t},\]
where the last inequality is due to \autoref{expl_asymptotics}. This convergence rate implies that $\tilde{u}$ does not belong to the family $\{u_\lambda\}_{\lambda>0}$. The following proposition addresses the relation between $u_*$ and $\tilde{u}$.
\begin{prop}\label{limitingprofile}
For $\alpha>1$ and $u_0=1$, let $u_*$ and $\tilde{u}$ be the function defined in \eqref{ustar} and \eqref{utilde}, respectively. One has the following statements.
\begin{enumerate}[(i)]
\item $u_*$ is a solution to \eqref{diffalpha-ricc} with $u_0=1$.
\item For $\alpha\ge 2$, $u_*=\tilde{u}\equiv 1$.
\item For $\alpha\in (1,2)$, $\tilde{u}(t)\le u_*(t)<1$ for all $t>0$. Consequently, $u_*(t)\to 1$ at an exponential rate $e^{-t}$ as $t\to\infty$.
\end{enumerate}
\end{prop}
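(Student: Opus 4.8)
The plan is to handle the three parts by leveraging the structure already established: the sandwich estimates on $X(t)$ on the hyperexplosion event, the explosion-time tail bounds of \autoref{expl_asymptotics}, and the characterization of $[|\partial V(t)|=\infty]$ via \autoref{infleaves}.

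For part~(i), I would first show $u_*$ is a solution process limit. Since $X(t)$ is a solution process satisfying \eqref{alphrecursion} and $0\le X(t)\le 1$ a.s., the indicator $\IND{[X(t)>0]}$ satisfies \eqref{alphrecursion} with $u_0=1$: if $T_\theta\ge t$ then $X(t)=1>0$, while if $T_\theta<t$ then $X(t)=X^{(1)}(\alpha(t-T_\theta))X^{(2)}(\alpha(t-T_\theta))$, so $\IND{[X(t)>0]}=\IND{[X^{(1)}>0]}\IND{[X^{(2)}>0]}$, and these factors are independent copies. Hence $\IND{[X(t)>0]}$ is an (indicator) solution process, so by conditioning on $T_\theta$, $u_*(t)=\P(X(t)>0)=\EXP\IND{[X(t)>0]}$ satisfies \eqref{mildalpha} with $u_0=1$, i.e.\ it solves \eqref{diffalpha-ricc}. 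Alternatively, one can pass to the limit $\lambda\downarrow 0$ directly in $u_\lambda'=-u_\lambda+u_\lambda^2(\alpha\cdot)$ using monotone convergence and the fact that the $u_\lambda$ are uniformly bounded in $[0,1]$, which justifies interchanging limit and integral in the mild form \eqref{mildalpha}.

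For part~(ii), when $\alpha\ge 2$, \autoref{alphexplosion}(i) gives $\P(|\partial V(t)|<\infty)=1$ for all $t$, so $\tilde u\equiv 1$. To see $u_*\equiv 1$: from the sandwich \eqref{sandwich} (respectively \eqref{rate} in the $u_0=0$ construction, but here we use the $\alpha>2$ construction of \autoref{main2}), on $[L<t]$ one has $X(t)>0$; and on $[L\ge t]$, since there is no explosion ($\P(S=\infty)$ fails for $\alpha>1$, but $\P(|\partial V(t)|<\infty)=1$), the solution process must coincide with the minimal process on $[S>t]$ where $X(t)=u_0^{|\partial V(t)|}=1>0$. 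More directly: for $\alpha>2$ the process in \autoref{main2} is $X_\lambda(t)=e^{-\lambda\sX(t)}$ with $\sX(t)<\infty$ a.s.\ (being the a.s.\ limit of the martingale $\sX_n$), hence $X(t)=e^{-\sX(t)}>0$ a.s.\ for every $t$, giving $u_*(t)=1$.

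For part~(iii), assume $\alpha\in(1,2)$. The upper bound $u_*(t)<1$ follows because $[L<t]$ is not null (hyperexplosion, $\P(L<\infty)=1$, and $L$ has a continuous distribution by \autoref{contdist}) and $X(t)<1$ on $[L<t]$, so $\P(X(t)=1)<1$; the argument showing $\P(X(t)<1)>0$ in \autoref{non-indicator}(i) applies verbatim, whence $u_*(t)=\P(X(t)>0)<1$. The inequality $\tilde u(t)\le u_*(t)$ is the crux: I would show $\{X(t)>0\}\supseteq\{|\partial V(t)|<\infty\}$ up to a null set. On $[|\partial V(t)|<\infty]$, \autoref{infleaves} tells us there is no ray $s$ with $\Theta_s=t$, and \autoref{finitemaximal} gives finitely many maximal $t$-hyperexplosive subtrees $\T_{v_1},\dots,\T_{v_n}$ together with finitely many extra vertices $v\notin\bigcup\T_{v_j}$ with $\Theta_v<t$; outside the hyperexplosive subtrees the recursion for $X$ terminates in finitely many steps, and on each $\T_{v_j}$ we are on the event $[L_{v_j}\le t-\Theta_{\overleftarrow{v_j}}]$ where the sandwich estimate (from the construction, \eqref{sandwich} localized to the subtree) gives a strictly positive value $e^{-\delta(\,\cdot\,)^{-\gamma}}>0$ for each of the finitely many factors. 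A finite product of strictly positive numbers is strictly positive, so $X(t)>0$ on $[|\partial V(t)|<\infty]$ minus a null set, giving $\tilde u(t)=\P(|\partial V(t)|<\infty)\le\P(X(t)>0)=u_*(t)$. Finally, combining $\tilde u(t)\le u_*(t)\le 1$ with $1-\tilde u(t)=\P(|\partial V(t)|=\infty)\le\P(L>t)\le Ce^{-t}$ from \autoref{expl_asymptotics} yields $0\le 1-u_*(t)\le Ce^{-t}$, i.e.\ the claimed exponential rate.

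\textbf{Main obstacle.} The delicate point is part~(iii), specifically verifying that on $[|\partial V(t)|<\infty]$ the value $X(t)$ is genuinely a \emph{finite} product of strictly positive factors. One must argue carefully that the recursion \eqref{alphrecursion}, when unfolded, decomposes $X(t)$ as a product over the (finitely many, by \autoref{finitemaximal}) vertices outside the maximal hyperexplosive subtrees times one factor per maximal hyperexplosive subtree, each of which is positive by the localized sandwich bound inherited from the construction of $X$ in \autoref{non-indicator}(i). Making the bookkeeping of this decomposition precise—matching the combinatorial structure of \autoref{finitemaximal}(ii) with the multiplicative structure of the solution process—is where the real work lies; everything else is a routine application of results already proved.
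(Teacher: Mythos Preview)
Part (i) is correct and matches the paper.

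For part (iii), your argument that $u_*(t)<1$ is a non-sequitur: you establish $\P(X(t)<1)>0$ (equivalently $\P(X(t)=1)<1$), but this says nothing about $\P(X(t)=0)$, which is what you need to conclude $u_*(t)=\P(X(t)>0)<1$. The process $X_{M,\delta}$ is a limit of products of factors in $(0,1]$, so $X(t)\in[0,1]$; on $[L<t]$ one has $X(t)\in(0,1)$, which rules out $X(t)=1$ there but does not force $X(t)=0$ anywhere. Proving $\P(X(t)=0)>0$ directly would require showing that the infinite product vanishes along a ray with $\Theta_s=t$ (via \autoref{infleaves}), and it is not clear this happens. The paper's argument is indirect and is the missing idea: since $u_*$ solves the equation, if $u_*(t_0)=1$ for some $t_0$ then the integral form forces $u_*\equiv 1$; Dini's theorem plus a tail bound gives $u_\lambda\uparrow u_*$ uniformly on $[0,\infty)$; choosing $\delta>0$ with $1-u_\delta<2-\alpha$ everywhere, the inequality $v_\delta(t)\ge\int_0^t e^{-(t-s)}\alpha v_\delta(\alpha s)\,ds$ yields $\|v_\delta\|_{L^1}\ge\|v_\delta\|_{L^1}$ with equality only if $v_\delta\equiv 0$, contradicting \eqref{u_lambda_ass}.

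For part (ii), your ``more directly'' observation that $X=e^{-\sX}>0$ a.s.\ handles $\alpha>2$ cleanly, but for $\alpha=2$ the relevant process is $X_{M,\delta}$ (the construction of \autoref{non-indicator} covers $\alpha\in(1,2]$), and your first attempt is muddled: $[L\ge t]$ is not $[S>t]$, and on $[S\le t\le L]$ the process need not coincide with the minimal one. The paper instead argues analytically for all $\alpha\ge 2$: \eqref{limitprofile} and \autoref{alphasharprmk} give $1-u_*(t)\le Ce^{-t}$; integrating $q'=-q+2q(\alpha\cdot)-q^2(\alpha\cdot)$ with $q=1-u_*$ on $[0,\infty)$ yields $\alpha^{-1}\|q\|_{L^2}^2\le(2/\alpha-1)\|q\|_{L^1}\le 0$, forcing $q\equiv 0$. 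Your decomposition argument from (iii), combined with $\P(|\partial V(t)|<\infty)=1$, would also close the $\alpha=2$ gap, but you did not say so.

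Your route to $\tilde u\le u_*$ via the event inclusion $[|\partial V(t)|<\infty]\subset[X(t)>0]$ is correct and is in fact proved in the proposition immediately following \autoref{limitingprofile}; the paper's own proof of (iii) instead uses the Picard iteration $v_1(t)=\P(L<t)\le u_*(t)$, $v_{n+1}=F[v_n]\le u_*$, with $\tilde u=\lim v_n$.
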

\begin{proof}
(i) Since $\{X(t)\}_{t\ge 0}$ is a solution process in sense of \eqref{alphrecursion}, $X_*(t)=\IND{[X(t)>0]}$ is also a solution process:
 \begin{equation}
{X}_*(t) = \begin{cases} 1 \ & \ \text{if} \ T_\theta\ge t\\
                           X_*^{(1)}(\alpha(t-T_\theta)\,X_*^{(2)}(\alpha (t-T_\theta)) \ &\  \text{if}\ T_\theta< t,
                             \end{cases}
 \end{equation}
By conditioning on $T_\theta$, one can see that $u_*(t)=\EXP X_*(t)$ satisfies the integral equation \eqref{mildalpha}, which is equivalent to  \eqref{diffalpha-ricc}.

(ii) Consider the case $\alpha\ge 2$. By \eqref{limitprofile} and the trichotomy in \autoref{alphasharprmk}, either $1-u_*(t)\sim e^{-t}$ as $t\to\infty$ or $u_*\equiv 1$. In both cases, there exists a constant $C>0$ such that $1-u_*(t)\le Ce^{-t}$ for all $t\ge 0$. Let $q(t)=1-u_*(t)$. Then \eqref{diffalpha-ricc} becomes
\[q'(t)=-q(t)+2q(\alpha t)-q^2(\alpha t).\]
Integrating both sides of the equation yields
\[q(t)=-\int_0^t q(s)ds+\frac{2}{\alpha}\int_0^{\alpha t} q(s)ds-\frac{1}{\alpha}\int_0^{\alpha t}q^2(s)ds.\]
Letting $t\to\infty$, we have 
\[\frac{1}{\alpha}\|q\|^2_{L^2}\le \left(\frac{2}{\alpha}-1\right)\|q\|_{L^1}.\]
Because $\alpha\ge 2$ and $q$ is a continuous function, we conclude that $q\equiv 0$.

(iii) Consider the case $\alpha\in(1,2)$. First, we show that $u_*(t)\ge \tilde{u}(t)$. Let \[v_1(t)=\P(L<t),\ \ \ v_{n+1}(t)=F[v_n](t):=e^{-t}+\int_0^t e^{-s}v^2_n(\alpha(t-s))ds.\]
Since $X(t)\in(0,1)$ on the event $[L<t]$, one has $v_1(t)=\P(L< t)\le \P(X(t)>0)=u_*(t)$. By simple induction on $n$, we have $v_n(t)\le u_*(t)$ for all $n\in\mathbb{N}$. In the proof of \cite[Prop.\ 2.1]{dascaliuc2023errata}, $\tilde{u}(t)=\lim v_n(t)$. Therefore, $\tilde{u}(t)\le u_*(t)$.

Next, we show that $u_*(t)<1$ for all $t>0$. Because $u_\lambda(t)\uparrow u_*(t)$ as $\lambda\downarrow 0$ and $u_*$ is a continuous function, by Dini's theorem, $u_\lambda\uparrow u$ uniformly on every bounded interval in $\mathbb{R}$. On the other hand,
\[0\le u_*(t)-u_\lambda(t)\le 1-u_\lambda(t)\le 1-u_1(t)\lesssim t^{-\gamma}\]
for all $\lambda\in(0,1)$. Therefore, $u_\lambda\uparrow u$ uniformly on $[0,\infty)$ as $\lambda\downarrow 0$. Now suppose by contradiction that there exists $t_0>0$ such that $u_*(t_0)=1$. The function $v=1-u_*$ satisfies
\[v(t)=\int_0^t e^{-(t-s)}(2v(\alpha s)-v^2(\alpha s))ds\ge\int_0^t e^{-(t-s)}v(\alpha s)ds.\]
Since $v(t_0)=0$, the above inequality implies $v(t)=0$ for all $t\in[0,\alpha t_0]$. Apply the inequality again with $t=\alpha t_0$, one gets $v(t)=0$ for all $t\in[0,\alpha^2 t_0]$. Repeating this argument, one obtains $v(t)=0$ for all $t>0$. Let $v_\lambda=1-u_\lambda$. Then $v_\lambda\downarrow v=0$ as $\lambda\downarrow 0$. There exists $\delta>0$ such that $v_\delta(t)<2-\alpha$ for all $t\ge 0$. Then
\begin{eqnarray*}
v_\delta(t)&=&\int_0^t e^{-(t-s)}(2v_\delta(\alpha s)-v^2_\delta(\alpha s))ds=\int_0^t e^{-(t-s)}v_\delta(\alpha s)(2-v_\delta(\alpha s))ds\\
&\ge& \int_0^t e^{-(t-s)}\alpha v_\delta(\alpha s)ds=(e^{-s}\IND{s>0})*(\alpha v_\delta(\alpha\cdot))
\end{eqnarray*}
Taking the $L^1$-norm of both sides, one gets $\|v_\delta\|_{L^1}\ge \|\alpha v_\delta(\alpha\cdot)\|_{L^1}=\|v_\delta\|_{L^1}$. The equality must occur. Therefore, $v_\delta\equiv 0$, which means $u_\delta\equiv 1$. This contradicts \eqref{u_lambda_ass}.
\end{proof}
\begin{remark}
With $u_*$ and $\tilde{u}$ defined by \eqref{ustar} and \eqref{utilde}, we have shown via a Picard iteration that 
\begin{equation}\label{utildestar}\tilde{u}\le u_*.\end{equation}
In the case $\alpha\ge 2$, $\tilde{u}\equiv 1$ by \autoref{alphexplosion}. Then \eqref{utildestar} implies $u_*\equiv 1$. Therefore, $[|\partial V(t)|<\infty]= [X(t)>0]$ a.s. In the case $\alpha\in(1,2)$, the inequality \eqref{utildestar} by itself does not necessarily imply $[|\partial V(t)|<\infty]\subset [X(t)>0]$. However, the following proposition establishes this inclusion.
\end{remark}
\begin{prop}
For $\alpha\in (1,2)$, let $X(t)=X_{M,\delta}(t)$ be the solution process defined in \autoref{non-indicator}. Then for every $t>0$, $[|\partial V(t)|<\infty]\subset [X(t)>0]$.
\end{prop}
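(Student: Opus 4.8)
The plan is to use the explicit product representation $X_n(t)=\prod_{|v|=n-1}\rho_{M,\delta}^2(\alpha^n(t-\Theta_v))$ from \autoref{martingale-a}(i) together with the structural description of $[|\partial V(t)|<\infty]$ supplied by \autoref{finitemaximal} and \autoref{infleaves}, and to show that along this event the products $X_n(t)$ stay bounded below by a strictly positive (random) constant, so that $X(t)=\lim_n X_n(t)>0$. I would work on the event $\Omega_0:=[|\partial V(t)|<\infty]$ with the null events of \autoref{finitemaximal} and \autoref{infleaves} (and the null set on which $X_n(t)$ fails to converge) removed; since $[|\partial V(t)|<\infty]\setminus\Omega_0$ is null, establishing $X(t)>0$ on $\Omega_0$ yields the stated inclusion up to a null event.

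First I would reduce the product to the maximal hyperexplosive subtrees. On $\Omega_0$, \autoref{finitemaximal} provides finitely many maximal $t$-hyperexplosive subtrees $\T_{v_1},\dots,\T_{v_m}$ (with $m\ge0$); they are pairwise disjoint, every vertex of each $\T_{v_i}$ has $\Theta_v<t$, and outside $\bigcup_i\T_{v_i}$ there are only finitely many vertices with $\Theta_v<t$. Hence for all $n$ exceeding a (random) threshold, a level-$(n-1)$ vertex $v\notin\bigcup_i\T_{v_i}$ must satisfy $\Theta_v\ge t$, so $\alpha^n(t-\Theta_v)\le 0\le M$ and its factor $\rho_{M,\delta}^2(\alpha^n(t-\Theta_v))$ equals $1$; consequently $X_n(t)=\prod_{i=1}^m P_i^{(n)}$ with $P_i^{(n)}:=\prod_{|v|=n-1,\,v\in\T_{v_i}}\rho_{M,\delta}^2(\alpha^n(t-\Theta_v))$ (an empty product, $=1$, when $m=0$).

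The crux, which I expect to be the main obstacle, is to show that the ``slack'' $\epsilon_i:=t-\Theta_{\overleftarrow{v_i}}-L_{v_i}$ is strictly positive on $\Omega_0$. Hyperexplosiveness of $\T_{v_i}$ gives $\epsilon_i\ge0$. If $\epsilon_i=0$, then since $L_{v_i}=\sup_{s\ni v_i}\sum_{j\ge|v_i|}\alpha^{-j}T_{s|j}$ is the supremum of path-times in $\T_{v_i}$, the greedy construction of \autoref{suppath}, carried out inside $\T_{v_i}$ with target $L_{v_i}$ in place of $t$, produces a ray $s^*\ni v_i$ attaining this supremum; then $\Theta_{s^*}=\Theta_{\overleftarrow{v_i}}+L_{v_i}=t$, i.e.\ $s^*\in\overline{W}(t)$, contradicting $\omega\in\Omega_0$ via \autoref{infleaves}. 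It is precisely here that $\alpha\in(1,2)$ is genuinely used: for $\alpha\ge2$ the hypothesis event is a.s.\ the whole space and $X\equiv1$, while for $\alpha\le1$ there is no hyperexplosion.

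Finally I would bound each $P_i^{(n)}$ from below. For $v\in\T_{v_i}$ at level $n-1$, writing $\Theta_v=\Theta_{\overleftarrow{v_i}}+\sum_{j=|v_i|}^{n-1}\alpha^{-j}T_{v|j}$ and using $\sum_{j=|v_i|}^{n-1}\alpha^{-j}T_{v|j}\le L_{v_i}$ gives $t-\Theta_v\ge\epsilon_i>0$. Since $\gamma=\log_\alpha 2$ we have $\alpha^{-n\gamma}=2^{-n}$, so for $n$ large $\alpha^n(t-\Theta_v)>M$ and
\[
\rho_{M,\delta}^2(\alpha^n(t-\Theta_v))=\big(1-\delta\,2^{-n}(t-\Theta_v)^{-\gamma}\big)^2\ \ge\ 1-2\delta\,2^{-n}\epsilon_i^{-\gamma}.
\]
As $\T_{v_i}$ has $2^{\,n-1-|v_i|}$ vertices at level $n-1$ and $2\delta\,2^{-n}\epsilon_i^{-\gamma}\to0$, the elementary bound $1-y\ge e^{-2y}$ on $[0,\tfrac12]$ yields, for $n$ large,
\[
P_i^{(n)}\ \ge\ \big(1-2\delta\,2^{-n}\epsilon_i^{-\gamma}\big)^{2^{\,n-1-|v_i|}}\ \ge\ \exp\!\big(-2\delta\,\epsilon_i^{-\gamma}\,2^{-|v_i|}\big)=:\beta_i>0 .
\]
Hence $X_n(t)=\prod_{i=1}^m P_i^{(n)}\ge\prod_{i=1}^m\beta_i>0$ for all large $n$, whence $X(t)=\lim_n X_n(t)\ge\prod_{i=1}^m\beta_i>0$ on $\Omega_0$, which proves $[|\partial V(t)|<\infty]\subset[X(t)>0]$ modulo a null event. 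All of this part is routine bookkeeping with the product formula once the positivity of each $\epsilon_i$ is in hand.
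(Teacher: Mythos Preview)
Your proposal is correct and follows essentially the same approach as the paper: use \autoref{finitemaximal} to reduce $X_n(t)$ on $[|\partial V(t)|<\infty]$ to a finite product over the maximal $t$-hyperexplosive subtrees, bound each subtree factor below by a positive constant independent of $n$, and pass to the limit. Your explicit verification that the slack $\epsilon_i=t-\Theta_{\overleftarrow{v_i}}-L_{v_i}>0$ via \autoref{suppath} and \autoref{infleaves} is a detail the paper leaves implicit in its sandwich estimate; a shorter alternative is to observe that $\bigcup_{v\in\T}[L_v=t-\Theta_{\overleftarrow v}]$ is null by continuity of the law of $L$ (\autoref{contdist}).
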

\begin{proof}
By \autoref{finitemaximal} (i), there are only finitely many maximal $t$-hyperexplosive subtrees on the event $E=[|\partial V(t)|<\infty]$. Let $\T_{v_1},\ldots,\T_{v_k}$ be the maximal $t$-hyperexplosive subtrees. By \autoref{finitemaximal} (ii), the set 
\[A=\{v\in\T\backslash(\T_{v_1}\cup\ldots\cup\T_{v_k}):\ \Theta_v<t\}\]
is finite. Let 
\[h=\max\{|v|:\ v\in A\}+\max\{|v_1|,\ldots,|v_k|\}.\]
Let $X_n(t)=X_{M,\delta,n}(t)$ be the stochastic process defined by \eqref{eq6211}. For $n>h$,
\begin{equation}\label{Xnjestimate}{{X}_{n}}(t)\IND{E}=\IND{E}\prod\limits_{|w|=n-1}{{{\rho }^{2}}({{\alpha }^{n}}(t-{{\Theta }_{w}}))}=\IND{E}\prod\limits_{j=1}^{k}{\prod\limits_{\begin{smallmatrix} 
 |w|=n-1 \\ 
 w\in {{\T}_{{{v}_{j}}}} 
\end{smallmatrix}}{{{\rho }^{2}}({{\alpha }^{n}}(t-{{\Theta }_{w}}))}}=\prod\limits_{j=1}^{k}{{{X}_{n,j}}(t)}\end{equation}
where
\[{{X}_{n,j}}(t)=\IND{E}\prod\limits_{|w|=n-1-|{{v}_{j}}|}{{{\rho }^{2}}({{\alpha }^{n}}(t-{{\Theta }_{{{v}_{j}}*w}}))}.\]
One can estimate $X_{n,j}(t)$ in a manner similar to \eqref{Xnestimate}:
\[\left( 1-\delta \frac{{{(t-{{L}_{n}})}^{-\gamma }}}{{{2}^{n}}} \right)^{2\cdot2^{n-1-|v_j|}}\IND{E}
\le  X_{n,j}(t)\le \left( 1-\delta \frac{{{(t-{{S}_{n}})}^{-\gamma }}}{{{2}^{n}}} \right)^{2\cdot2^{n-1-|v_j|}}\IND{E}.\]
Substitute this estimate into \eqref{Xnjestimate}:
\[\left( 1-\delta \frac{{{(t-{{L}_{n}})}^{-\gamma }}}{{{2}^{n}}} \right)^{2\kappa\cdot2^{n-1}}\IND{E}
\le  X_{n}(t)\IND{E}\le \left( 1-\delta \frac{{{(t-{{S}_{n}})}^{-\gamma }}}{{{2}^{n}}} \right)^{2\kappa\cdot2^{n-1}}\IND{E}.\]
where $\kappa=2^{|v_1|+\ldots+|v_k|}$.
Letting $n\to\infty$, we have
\[e^{-\delta\kappa(t-L)^{-\gamma}}\IND{E}\le X(t)\IND{E}\le e^{-\delta\kappa(t-S)^{-\gamma}}\IND{E}\,.\]
Therefore, $X(t)>0$ on the event $E$.
\end{proof}

Next, we will present several illustrations of the use of the processes constructed in sections \ref{firstcase}--\,\ref{generalinitialdata} to use Monte-Carlo techniques to simulate solutions of \eqref{pantode} and \eqref{diffalpha-ricc}.


\section{Algorithmic description and Monte Carlo Simulations}\label{simulation}

In this section, we illustrate the results of using Carlo algorithm to simulate solution processes $\sX$ and $X$.
Our main interest to simulate nontrivial solutions to the pantograph equation
\begin{equation}\label{panto0}w'=-w+aw(\alpha t),\ \ \ w(0)=0\end{equation}
and non-constant solutions to the $\alpha$-Riccati equation
\begin{equation}\label{alpha1}u'+u=u^2(\alpha t),\ \ \ u(0)=1\end{equation}

\subsection{Pantograph equation}
It has been shown in \autoref{prop11112} that, for the case $\alpha>\max\{a,1\}$, \eqref{panto0} has a nontrivial solution 
$\eta(t)=\EXP(\tsX_*(t))$, where $\tsX_*(t)=(t-\tilde{S})^{-\gamma}\IND{[\tilde{S}<t]}$ and $\tilde{S}=\sum_{i=0}^\infty\alpha^{-i}T_i$. 
It can be easily seen that the $n$'th derivative $\eta^{(n)}(t)$ satisfies the equation
\[w'=-w+a\alpha^n w(\alpha t),\ \ \ w(0)=0\]
and the $n$'th iterated integral $\int_0^t\int_0^{t_1}\ldots\int_0^{t_{n-1}}\eta(s)dsdt_{n-1}\ldots dt_1$ satisfies the equation
\[w'=-w+\frac{a}{\alpha^n} w(\alpha t),\ \ \ u(0)=0.\]
Thus, by means of differentiation and integration, one can obtain a nontrivial solution to \eqref{panto0} for any $a>0,\alpha>1$. 
It is sufficient to illustrate the simulation of $\eta$, $\eta'$, and $\int_0^t\eta(s)ds$. We will mention an analytic method and 
probabilistic method to do so. 

\textbf{Analytic method}: let $\tsX_n(t)=(t-\tS_n)^{-\gamma}\IND{[\tS_n<t]}$ where $\tS_n=\sum_{i=0}^{n-1}\alpha^{-i}T_i$. Then 
$\eta(t)=\lim\eta_n(t)$ where $\eta_n(t)=\EXP(\tsX_n(t))$. The sequence $(\eta_n)$ satisfies a Picard iteration
\[\eta_0(t)=t^{-\gamma},\ \ \ \eta_n(t)=\int_0^t ae^{-s}\eta_{n-1}(\alpha(t-s))ds\]
One can use Mathematica to get an explicit formula for $\eta_1$, $\eta_2$, $\eta_3$,... However, these formula exhibit 
numerical artifacts for large values of $t$. They collapse to 0 instead of decaying as $t^{-\gamma}$ (\autoref{f1}).
\begin{figure}[h!] 
\begin{align*}
\includegraphics[scale=.65]{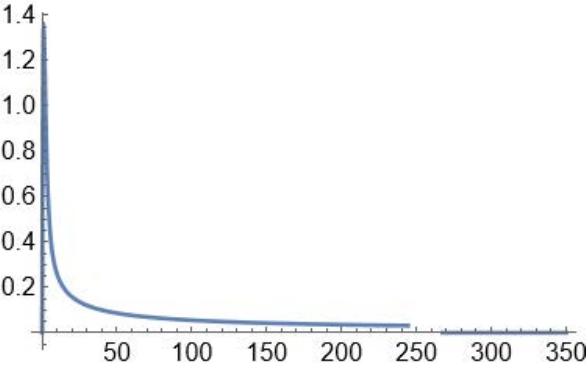}\ \ \ \ 
\includegraphics[scale=.65]{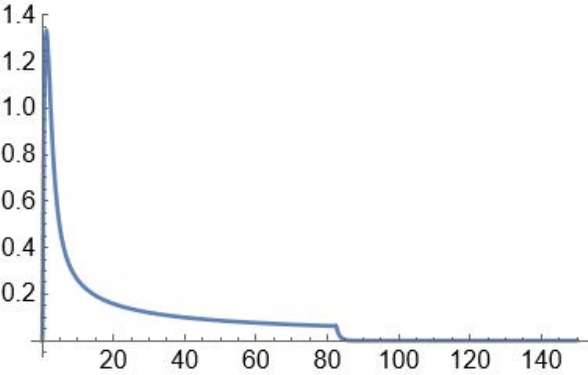}
\end{align*}
\caption{Graphs of $\eta_2$ (left) and of $\eta_3$ (right) with numerical artifacts ($\alpha=3,a=2$)}
\label{f1}
\end{figure}
To guarantee that $\eta$ decays as $t^{-\gamma}$ (\autoref{prop11112}), we will use the approximations $\eta_n\approx \tilde{\eta}_n$ 
where
\[\tilde{\eta}_0(t)=\eta_0(t)=t^{-\gamma},\ \ \ \tilde{\eta}_n(t)=\left\{ \begin{array}{*{35}{r}}
   \int_{0}^{t}{a{{e}^{-s}}{{\tilde\eta }_{n-1}}(\alpha (t-s))} & \text{if} & t<50  \\
   {{t}^{-\gamma }} & \text{if} & t>50  \\
\end{array} \right.\]
A comparison of $\eta_0$, $\eta_1$,..., $\eta_6$ where $\alpha=3,a=2$ is in \autoref{etacompare}. We observed that 
$\eta(t)\approx\tilde\eta_2(t)$ is already a good approximation for the simulation of the $\alpha$-Riccati in the next section.
\begin{figure}[h!] 
\centering
\includegraphics[scale=.7]{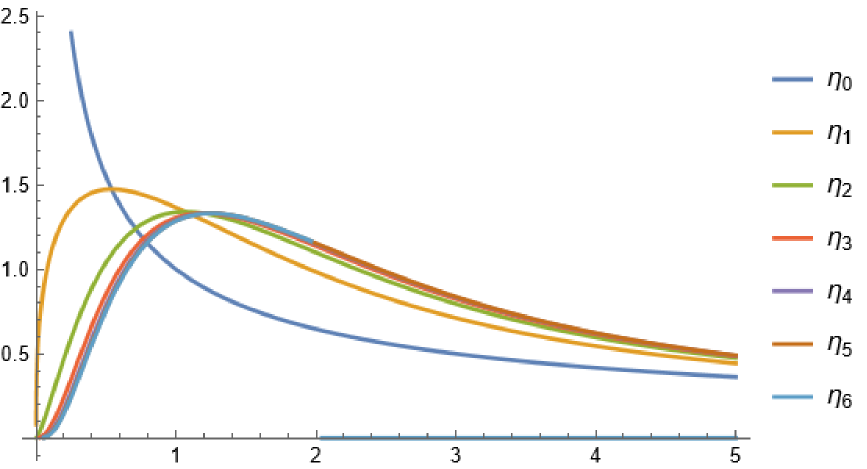}
\caption{Comparison of $\eta_0$,$\eta_1$,...,$\eta_6$ where $\alpha=3,a=2$}
\label{etacompare}
\end{figure}

\textbf{Probabilistic method}: we approximate the unary process $\tsX_*=(t-\tilde{S})^{-\gamma}\IND{\tilde{S}<t}$ by $\tsX_*\approx \tsX_k$.
With $\alpha=3$, $a=2$, we simulate the solution on the time interval $[0,12]$ by Monte Carlo algorithm as follows. The time interval is 
divided into a lower range $[0,2.5]$, middle range $[2.5,5.7]$, and upper range $[5.7,12]$. Each range is then subdivided by evenly spaced 
grid-points in log-space. The number of grid-points in each range is 80, 10, 10, respectively. Independent vectors of exponential times 
$(T_0,T_1,...,T_{k})$ are drawn with sample size $N=10000,\,3000,\,2000$ for the lower, middle, and upper range respectively. 
At each time node $t$, we approximate 
\[\eta(t)\approx\EXP(\tsX_{k}(t))\approx \frac{1}{N}\sum_{j=1}^N \tsX_{k,j}(t)\] 
where $\tsX_{k,j}$ is the realization of $\tsX_k$ in the $j$'th sampling. One can simulate the derivative $\eta'$ at the same time by noticing 
that
\[\eta'(t)=a\eta(\alpha t)-\eta(t)=\EXP(a\tsX_*(\alpha t)-\tsX_*(t))\approx \frac{1}{N}\sum_{j=1}^N (a \tsX_{k,j}(\alpha t)-\tsX_{k,j}(t)).\]
The integral $w(t)=\int_0^t\eta(s)ds$ satisfies $w'=\frac{a}{\alpha}w(\alpha t)-w(t)$. From this identity, one can easily check that
\[w(t)=\sum_{i=1}^\infty\left(\frac{\alpha}{a}\right)^iw'(\alpha^{-i}t)=\sum_{i=1}^\infty\left(\frac{\alpha}{a}\right)^i\eta(\alpha^{-i}t).\]
Therefore, $w$ can be simulated at the same time of simulating $\eta$ by
\[w(t)=\EXP\left(\sum_{i=1}^\infty\left(\frac{\alpha}{a}\right)^i\tsX_*(\alpha^{-i}t)\right)\approx
\frac{1}{N}\sum_{j=1}^N\sum_{i=1}^{\bar{k}}\left(\frac{\alpha}{a}\right)^i\tsX_{k,j}(\alpha^{-i}t)\]
where $\bar{k}$ is a truncation index. The numerical results are visualized in \autoref{etaf}.
\begin{figure}[htp] 
\centering
\includegraphics[scale=.55]{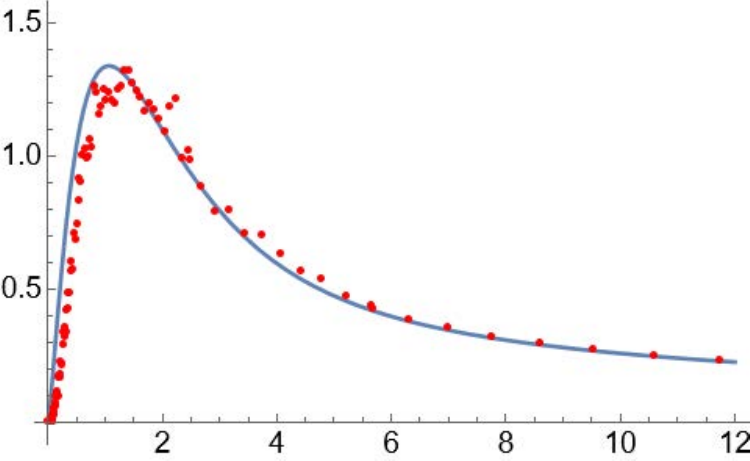}\ \ \,
\includegraphics[scale=.55]{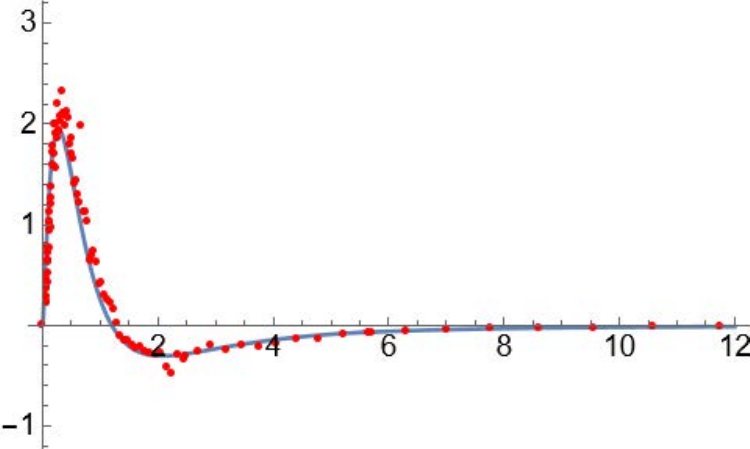}\vspace{1em}
\includegraphics[scale=.55]{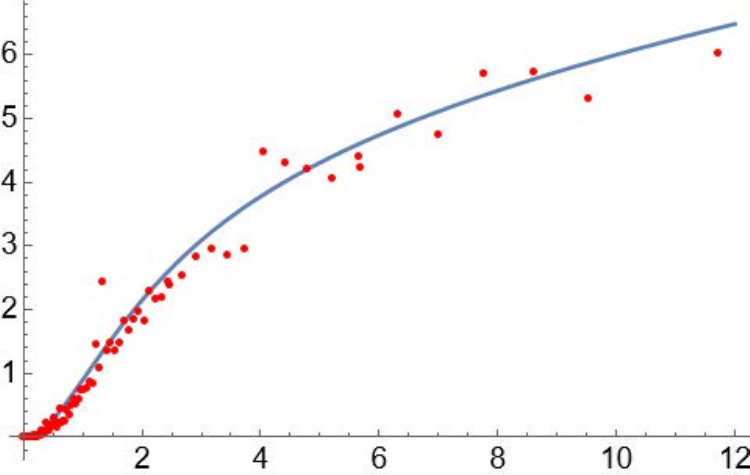}
\caption{$\alpha=3,a=2,K=15,\bar{K}=6$. Left figure: graph of $\tilde\eta_2$ and its Monte Carlo approximation. Right figure: graph of 
$\tilde\eta'_2\approx-\tilde\eta_2+a\tilde\eta_2(\alpha t)$ and its Monte Carlo approximation. Bottom figure: graph of $\int_0^t\tilde\eta_2(s)ds$ 
and its Monte Carlo approximation.}
\label{etaf}
\end{figure}


\subsection{$\alpha$-Riccati equation}
Our goal is to simulate solutions $u_\lambda=\EXP(X^\lambda)$ for various values of $\lambda>0$, where $X(t)$ is the solution 
processes $X_{M,\delta,\lambda}(t)$ defined in \autoref{main1} (for $\alpha\in(1,2]$) or the solution processes $X_\lambda(t)$ defined in 
\autoref{main2} (for $\alpha>2$) via stochastic Picard iterations. These solutions differ from each other by their convergence rate as 
$t\to\infty$: 
\begin{eqnarray}\label{rate1}
\lim_{t\to\infty}\frac{1-u_{\lambda}(t)}{\lambda\delta t^{-\gamma}}&=&1 \ \ \ \text{for~}1<\alpha\le 2\ \text{(\autoref{main1})}\\
\lim_{t\to\infty}\frac{1-u_{\lambda}(t)}{\lambda t^{-\gamma}}&=&1 \ \ \ \text{for~}\alpha> 2\ \text{(\autoref{main2})}
\end{eqnarray}
 We note that the nonuniqueness of the $\alpha$-Riccati equation was also found numerically via a different method in \cite{hale}. 
 Before considering each case in detail, we would like to point out two different sampling methods. Fix $\lambda=1$ for illustration purpose.

The first method is to use the recursion formula of $X$ and sample the exponential times $T_v$ one at a time. For example, to compute 
$X\approx X_{n=10}$, we sample $T_0$. Then the computer program spawns two independent processes to compute two independent 
versions of $X_{n=9}$. One process samples $T_1$ and the other process samples $T_2$. Each process then spawns another two 
independent processes to compute a total of four different versions of $X_{n=8}$, and so on.

The second method is to sample the exponential times $T_v$ all at once. For example, to compute $X_{n}$, we sample a vector of $2^{n}-1$ 
exponential times and label them serially as $(\tau_1,\tau_2,\ldots,\tau_{2^n-1})$. This vector constitutes a realization of a binary tree of height 
$n$ where $T_\theta=\tau_1$, $T_1=\tau_2$, $T_2=\tau_3$, $T_{11}=\tau_4$, $T_{12}=\tau_5,$..., $T_{212}=\tau_{13}$, and so on. 
At generation $n$, the vertices from left to right are labeled by $2^{n-1}$, $2^{n-1}+1$,..., $2^n-1$. For $2^{n-1}\le j\le 2^n-1$, 
we calculate the survival times
\[\theta_j=\sum_{i=0}^{n-1}\frac{\tau_{[j/2^i]}}{\alpha^{n-1-i}}\]
where $[s]$ denotes the floor of a real number $s$. We then use the closed form of $X_n$ to approximate $X$:
\[X(t)\approx X_n(t)=\prod_{j=2^{n-1}}^{2^n-1}X_0^2(\alpha^n(t-\theta_j))\]
We divide the time interval into the lower 
range $[0,1]$, middle range $[1,11]$, and upper range $[11,111]$, and subdivide each range with 
equally spaced grid-points in log-space. The number of grid-points in each range is 10, 25, 25, respectively. At each node $t$, we approximate
\begin{equation}\label{ulapprox}u_\lambda(t)\approx \frac{1}{N}\sum_{k=1}^N X_{n,k}^\lambda(t)\end{equation}
where $N$ is the sample size and $X_{n,k}$ is the realization of $X_n$ in the $k$'th sampling. The variance of $X^\lambda$ is
\[\frac{\text{Var}(X^\lambda(t))}{t^{-\gamma}}=\frac{\EXP[X^{2\lambda}]-\EXP[X^\lambda]^2}{t^{-\gamma}}
=\frac{u_{2\lambda}(t)-u^2_\lambda(t)}{t^{-\gamma}}
=(1+u_\lambda(t))\frac{1-u_\lambda(t)}{t^{-\gamma}}-\frac{1-u_{2\lambda}(t)}{t^{-\gamma}}\]
Thus, Var$(X^\lambda(t))=o(t^{-\lambda})$ as $t\to\infty$. It can be expected that for upper time range, one obtains a good approximation 
for $u_\lambda(t)$ even with a relatively small sample size.

\textbf{In the case $1<\alpha\le 2$}, the ground state is given by \eqref{eq:72191}:
\begin{equation*}X_0(t)=\left\{ \begin{array}{*{35}{l}}
   1 & \text{if} & t\le M ,  \\
   1-\delta{{t}^{-\gamma}} & \text{if} & t>M   \\
\end{array} \right.\end{equation*}
where $M>0$ is sufficiently large and $\delta>0$ is sufficiently small. 
For numerical simulation, we choose $\delta=4$, $M=10$, and sample size $N=200$. For each sample $X_{n=10,k}$ of 
$X_{10}$, we compute $X_{10,k}^\lambda$ with $\lambda=0.5,1,3$ and approximate 
$u_\lambda(t)$ according to \eqref{ulapprox}. The convergence rate \eqref{rate1} is illustrated by the 
log-log plot of $1-u_\lambda$ with the log-log plot of $\lambda\delta t^{-\gamma}$ (a straight line). See \autoref{alpha1.4-fig}.

\begin{figure}[h!] 
\begin{align*}
\includegraphics[scale=.55]{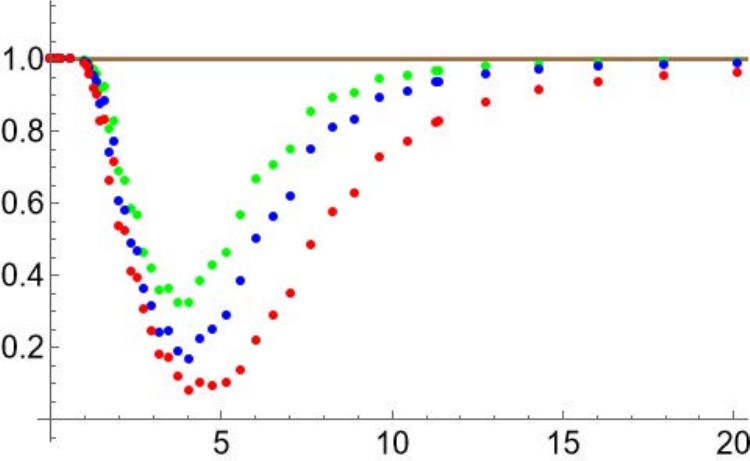}\ \ \,
\includegraphics[scale=.55]{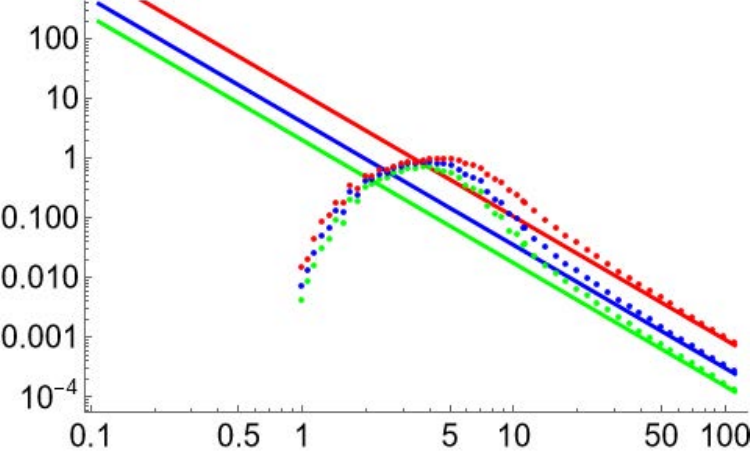}
\end{align*}
\caption{$\alpha=1.4$ and $\lambda=0.5$ (green), $\lambda=1$ (blue), $\lambda=3$ (red). Left figure: discrete graph of $u_\lambda$ 
via Monte Carlo. Right figure: log-log plot of $\lambda\delta t^{-\gamma}$ and of $1-u_\lambda$.}
\label{alpha1.4-fig}
\end{figure}

\textbf{In the case $\alpha> 2$}, the ground state is 
\begin{equation*}X_0(t)=\left\{ \begin{array}{*{35}{l}}
   1 & \text{if} & t\le 0 ,  \\
   e^{-\eta(t)}\approx e^{-\tilde{\eta}_2(t)} & \text{if} & t>0   \\
\end{array} \right.\end{equation*}
 We discretize
 the time interval the same way as in the case $1<\alpha\le 2$. See \autoref{alpha3-fig}.

\begin{figure}[h!] 
\begin{align*}
\includegraphics[scale=.55]{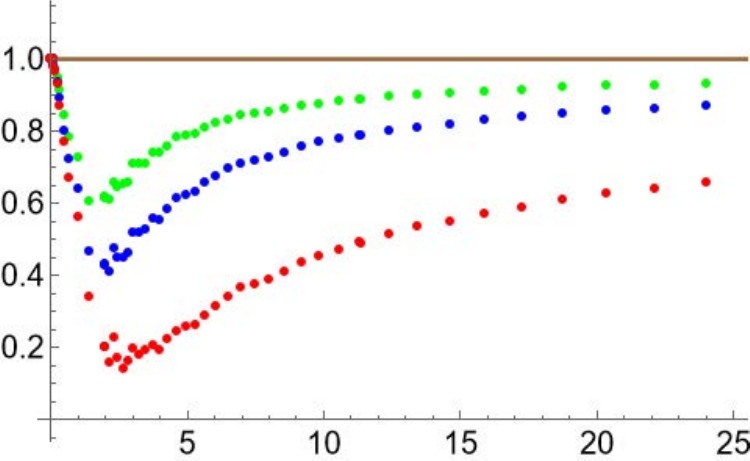}\ \ \,
\includegraphics[scale=.55]{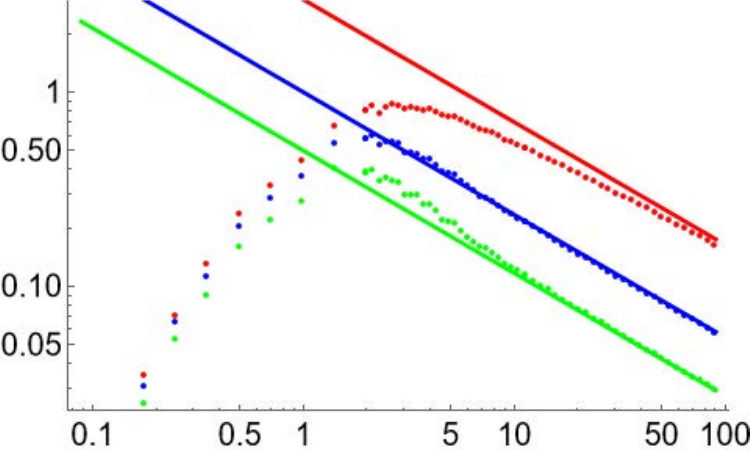}
\end{align*}
\caption{$\alpha=3$ and $\lambda=0.5$ (green), $\lambda=1$ (blue), $\lambda=3$ (red). Left figure: discrete graph of $u_\lambda$ via Monte Carlo. Right figure: log-log plot of $\lambda t^{-\gamma}$ and of $1-u_\lambda$.}
\label{alpha3-fig}
\end{figure}

\medskip



\section{Appendices}\label{appendices}

\subsection{Rates of convergence to 1 of alpha-Riccati solutions}\label{appendix1}

The following analytical result is used in \autoref{Sec5} to establish asymptotic properties of the distribution of the explosion time 
$S$ and hyperexplosion time $L$. In its statement, the following standard comparison notation is used: $f(t)\gtrsim g(t)$ means that 
there exist $c, T>0$ such that $g(t)\ge c f(t)$ for all $t\ge T$ and $f\sim g$  means $f\gtrsim g$ and $g\gtrsim f$.

\begin{thm}\label{alphasharprmk}
Let $\alpha>1$.  If $u(t)$ is a solution to \eqref{diffalpha-ricc} 
such that $u(t)\to 1$ as $t\to\infty$, then only one of the following scenarios is possible:
\begin{enumerate}
\item[(a)] \ $|u(t)-1|\gtrsim t^{-\gamma}$, where $\gamma>0$ is given by \eqref{gma},
\item[(b)]\ $|u(t)-1|\sim e^{-t}$,
\item[(c)]\ $u(t) = 1$ for  all $t\ge 0$.
\end{enumerate}
\end{thm}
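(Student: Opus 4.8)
The approach is to pass to the deficit $v(t) := 1 - u(t)$, which by hypothesis satisfies $v(t) \to 0$ as $t \to \infty$ and, substituting $u = 1 - v$ into \eqref{diffalpha-ricc}, solves
\begin{equation}\label{deficit-eq}
v'(t) = -v(t) + 2v(\alpha t) - v^{2}(\alpha t), \qquad\text{equivalently}\qquad \frac{d}{dt}\bigl(e^{t} v(t)\bigr) = e^{t}\, v(\alpha t)\bigl(2 - v(\alpha t)\bigr).
\end{equation}
The three alternatives are pairwise incompatible (since $e^{-t} = o(t^{-\gamma})$ and $u\equiv 1$ forces $|u-1|\equiv 0$), so it suffices to show that if $u\not\equiv 1$ then (a) or (b) holds. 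Fix $t_0$ with $|v|<1$ on $[t_0,\infty)$, so that $2-v(\alpha t)\in(1,3)$ there, and abbreviate $g(t):=e^{t}v(t)$. First I would dispose of the trivial case: if $v\equiv 0$ on some half-line $[T,\infty)$, then on $[T/\alpha,T]$ equation \eqref{deficit-eq} reduces to $v'=-v$ with terminal value $v(T)=0$, forcing $v\equiv0$ on $[T/\alpha,T]$; iterating over $[T\alpha^{-n},T\alpha^{-(n-1)}]$ and using continuity gives $v\equiv0$ on $(0,\infty)$, i.e.\ scenario (c). Henceforth $v$ is not identically zero near $+\infty$.

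The next step is to show $v$ has eventually constant sign. From the second form of \eqref{deficit-eq}, the sign of $(e^{t}v)'(t)$ on $[t_0,\infty)$ equals that of $v(\alpha t)$, i.e.\ of $g(\alpha t)$; I would use this rigidity to rule out oscillation (a sign change of $v$ at arbitrarily large times would force $g$ to possess arbitrarily late local extrema, hence by the sign rule to vanish at the dilated points, producing a sequence of zeros of $v$ incompatible with $v$ not being eventually zero — alternatively, non-oscillation follows a posteriori from the Kato--McLeod asymptotic classification used below). Granting this, assume without loss of generality $v>0$ on $[t_0,\infty)$ (the case $v<0$ is symmetric, the term $-v^{2}(\alpha t)$ only reinforcing the sign of $(e^{t}v)'$). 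Then $(e^{t}v)'>0$ on $[t_0,\infty)$, so $g$ is strictly increasing and $g(t)\uparrow L$ for some $L\in(g(t_0),\infty]\subset(0,\infty]$. If $L<\infty$, then $v(t)=e^{-t}g(t)\sim Le^{-t}$, whence $|u(t)-1|=v(t)\sim e^{-t}$: scenario (b).

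There remains the case $L=+\infty$, where the goal is scenario (a), namely $v(t)\ge c\,t^{-\gamma}$ for some $c>0$ and all large $t$. Here I would exploit the criticality $\alpha^{\gamma}=2$ from \eqref{gma}, which makes $t^{-\gamma}$ an exact power-law solution of the \emph{linear} pantograph $w'=-w+2w(\alpha t)$. Since $v\to 0$, the quadratic term in \eqref{deficit-eq} is subordinate to the $O(|v|)$ terms ($v^{2}(\alpha t)=o(v(\alpha t))$), so \eqref{deficit-eq} is a perturbation of that linear equation; the plan is (i) to record, from \cite{kato1971functional} (the asymptotic classification of \eqref{genpantode} for $a=2$, $b=-1$, $\alpha>1$; see also \autoref{prop11112} and the explicit Laplace-transform functional equation plus a Karamata Tauberian argument), that every decaying solution of $w'=-w+2w(\alpha t)$ is $\equiv 0$, or $\sim c\,e^{-t}$ with $c\ne 0$, or $\sim c\,t^{-\gamma}$ with $c>0$; and (ii) to transfer this to \eqref{deficit-eq} by a variation-of-parameters/fixed-point argument in a suitably weighted space, showing that the forcing $v^{2}(\alpha t)$ — the square of a decaying quantity evaluated at the \emph{larger} argument $\alpha t$ — does not alter the leading asymptotics. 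Since $L=\infty$ means $g=e^{t}v\to\infty$, the $e^{-t}$ branch is excluded and $v\not\equiv0$; hence $v\sim c\,t^{-\gamma}$ with $c>0$, which is scenario (a).

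The main obstacle is exactly this last case: excluding decay rates strictly between $e^{-t}$ and $t^{-\gamma}$ when $L=\infty$. The naive Gronwall/bootstrap estimates one tries first (e.g.\ feeding a bound $v(s)\le\varepsilon(s)\,s^{-\gamma}$ with $\varepsilon\downarrow0$ into the integral form of \eqref{deficit-eq}) turn out to be self-consistent for essentially any subexponential rate and yield no contradiction; the genuine input must be the precise scaling $\alpha^{\gamma}=2$ — via the series/Laplace-transform structure of the linear pantograph and a Tauberian argument, or equivalently via Kato--McLeod — together with a careful verification that the nonlinear term cannot manufacture an intermediate rate. A secondary but necessary technical point is the non-oscillation lemma, which must be in place before the monotonicity of $g=e^{t}v$ can be invoked; it too is most cleanly obtained from the same linear-pantograph reduction.
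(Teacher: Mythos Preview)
Your proposal has two genuine gaps, both of which you acknowledge. First, the non-oscillation step is not established: your mechanism --- a local extremum of $g=e^{t}v$ between consecutive zeros of $v$ forces $v(\alpha\cdot)$ to vanish at the dilated point --- does manufacture further zeros, but that is perfectly consistent with $v$ having infinitely many sign changes; no contradiction is reached. Appealing to Kato--McLeod ``a posteriori'' is circular, since that classification already presupposes the asymptotic structure you are trying to prove. The paper sidesteps this entirely by never invoking a sign; it works with $|v|$ throughout.

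Second, and more importantly, the case $L=\infty$ is the whole content of the theorem, and you stop at identifying it as the obstacle. The paper resolves it by taking the contrapositive direction: rather than proving $e^{t}v\to\infty\Rightarrow v\gtrsim t^{-\gamma}$, it assumes $|v(t)|=o(t^{-\gamma})$ (the failure of (a)) and bootstraps to exponential decay. Setting $m(\tau)=\sup_{t\ge\tau}t^{\gamma}|v(t)|$, which is bounded and $\downarrow 0$, the integral form of your \eqref{deficit-eq} together with the identity $2\alpha^{-\gamma}=1$ yields
\[
m(\tau)\le e^{-\nu\tau}+\Bigl(1+C\tau^{-\delta}\Bigr)\,m(\alpha^{1/2}\tau)
\]
for some $\nu,C,\delta>0$; iterating and using $\prod_{j\ge0}(1+C\alpha^{-j\delta/2}\tau^{-\delta})<\infty$ gives $m(\tau)=O(e^{-\nu\tau})$, hence $|v|=O(e^{-\epsilon t})$. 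A separate bootstrap lemma (\autoref{sharplem}) then upgrades any $O(e^{-\rho t})$ with $0<\rho<1$ to $O(e^{-t})$, and shows that $O(e^{-\rho t})$ with $\rho>1$ forces $v\equiv 0$; combining with one more pass through the integral equation yields (b) or (c). The point is that the contrapositive converts the hard \emph{lower} bound (a) into an \emph{upper}-bound iteration, where the critical exponent $\gamma$ enters exactly once, through $2\alpha^{-\gamma}=1$, to make the leading coefficient in the recursion equal to $1$.
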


\begin{remark}
The case (a) is illustrated by a special solution of \cite{athreya}. Case (b) is illustrated by 
$u(t)=\P(L> t)$, and case (c) by $u(t)\equiv 1$.
\end{remark}

The proof of  \autoref{alphasharprmk} rests on the following lemma which couples the initial 
data $0$ and $1$ through the inclusion-exclusion principle.  Namely, by inclusion-exclusion, 
$\w(t) = \P(L\le t)$ solves \eqref{alphinclexcl} below with $v(0)=0$.

\begin{lem}\label{sharplem} 
Assume $\alpha >1$ in the $\alpha$-Riccati equation. Suppose that $v(t)$, $t>0$ solves
\begin{equation}\label{alphinclexcl}
\w^\prime(t) = -\w(t) + 2\w(\alpha t)-\w^2(\alpha t),  \quad v(0)=0,
\end{equation}
and assume that $|\w(t)|= O(e^{-\rho t})$ as $t\to\infty$ for some $\rho>0$.
\begin{enumerate}
\item[(i)] \ If $0<\rho<1$ then $\w(t) = O(e^{-t})$ as $t\to\infty$.
\item[(ii)]\ If $\rho >1$ then $\w(t) \equiv 0$.
\end{enumerate}
\end{lem}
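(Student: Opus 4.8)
The plan is to convert the differential equation \eqref{alphinclexcl} into an integral identity and then to bootstrap the decay rate. Since $v(0)=0$, integrating \eqref{alphinclexcl} against the integrating factor $e^t$ yields the mild form
\[
v(t) = \int_0^t e^{-(t-s)}\bigl(2v(\alpha s) - v^2(\alpha s)\bigr)\,ds, \quad t\ge 0.
\]
For part (i), suppose $|v(t)| \le C e^{-\rho t}$ for all $t$ with $0<\rho<1$. The $v^2$ term decays like $e^{-2\rho s}$, which is strictly faster than $e^{-\rho s}$, so it is harmless; the main contribution is from $2v(\alpha s)$, which is bounded by $2C e^{-\rho\alpha s}$. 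Plugging this into the integral gives $|v(t)| \le e^{-t}\int_0^t e^{s}\cdot O(e^{-\rho\alpha s})\,ds$. Here the key point is that $\rho\alpha$ may be larger or smaller than $1$: if $\rho\alpha \ge 1$ the integral $\int_0^t e^{(1-\rho\alpha)s}\,ds$ is $O(t)$ or $O(1)$, so $|v(t)| = O(t e^{-t})$ or $O(e^{-t})$; if $\rho\alpha<1$ we pick up $|v(t)| = O(e^{-\rho\alpha t})$, an improved rate $\rho' = \rho\alpha > \rho$. Iterating, the sequence $\rho_k = \rho\alpha^k$ strictly increases (since $\alpha>1$) and crosses $1$ after finitely many steps; once $\rho_k \ge 1$ the argument above closes and delivers $v(t) = O(e^{-t})$ (absorbing any polynomial factor by shrinking the exponent slightly, or handling the borderline $\rho_k=1$ case directly since then $\int_0^t 1\,ds = t$ and $t e^{-t} = O(e^{-(1-\epsilon)t})$, then one more iteration gives a genuine exponent $\ge 1$ again — so a small additional care is needed at the threshold).

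For part (ii), assume $|v(t)| \le C e^{-\rho t}$ with $\rho>1$. Running the same estimate, the bound $2C e^{-\rho\alpha s}$ with $\rho\alpha > \rho > 1$ gives $|v(t)| \le 2C e^{-t}\int_0^t e^{(1-\rho\alpha)s}\,ds \le \frac{2C}{\rho\alpha - 1} e^{-t}$, and more carefully one gets $|v(t)| \le \frac{2C}{\rho\alpha-1}e^{-\rho\alpha t} + (\text{lower order from } v^2)$ — wait, that is not quite right since $e^{-t}$ dominates $e^{-\rho\alpha t}$; let me instead keep track: $e^{-t}\int_0^t e^{(1-\rho\alpha)s}\,ds = \frac{e^{-t} - e^{-\rho\alpha t}}{\rho\alpha - 1} \le \frac{e^{-t}}{\rho\alpha-1}$. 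So this would only give $O(e^{-t})$, not improvement beyond $1$. The correct approach for (ii) is different: one should show $v$ must vanish identically. Define $A = \sup_{t\ge 0} e^{\rho t}|v(t)| < \infty$. Then from the integral identity, $e^{\rho t}|v(t)| \le \int_0^t e^{\rho t - t + s}(2|v(\alpha s)| + v(\alpha s)^2)\,ds \le \int_0^t e^{(\rho-1)t + s}(2A e^{-\rho\alpha s} + A^2 e^{-2\rho\alpha s})\,ds$. Since $\rho\alpha > 1$, the integral $\int_0^\infty e^{(1-\rho\alpha)s}\,ds$ converges, and the $e^{(\rho-1)t}$ factor times $e^{-\rho\alpha s}$ evaluated — hmm, this still needs the $t$-dependence to cancel.

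Let me restructure: the clean route for (ii) is to bootstrap the \emph{exponent} itself. From $|v(t)|\le C e^{-\rho t}$ we derived $|v(t)| \le \frac{2C}{\rho\alpha-1}e^{-t} + O(e^{-2\rho\alpha t})$. Hmm — but $e^{-t}$ with $\rho > 1$ is actually a \emph{worse} bound. The resolution: with $\rho>1$ fixed, observe $2v(\alpha s) - v^2(\alpha s)$ is itself $O(e^{-\rho\alpha s})$ and feed this back to get $|v(t)| \le e^{-t}\int_0^t e^s \cdot O(e^{-\rho\alpha s})\,ds$; split the integral at $t/2$: on $[0,t/2]$ the integrand contributes $O(e^{-t}\cdot e^{t/2}) = O(e^{-t/2})$, and on $[t/2,t]$ it contributes $O(e^{-t}\cdot e^{-(\rho\alpha-1)t/2}\cdot t) \le O(e^{-\rho\alpha t /2})$ roughly — iterating this "halving" argument drives the effective exponent to $+\infty$, forcing $v\equiv 0$. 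I expect this bootstrapping at the right threshold to be the main technical obstacle: making the induction on the decay exponent precise, handling the borderline cases where an exponent lands exactly at $1$, and correctly absorbing polynomial-in-$t$ prefactors. The $v^2$ term is never the difficulty since it only accelerates decay; everything hinges on carefully tracking how the dilation $s \mapsto \alpha s$ interacts with the convolution kernel $e^{-(t-s)}$.
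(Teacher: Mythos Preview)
Your treatment of part (i) is correct and essentially identical to the paper's: both convert \eqref{alphinclexcl} to the mild form, bound the forcing term $2v(\alpha s)-v^2(\alpha s)$ by a constant times $e^{-\rho\alpha s}$, and bootstrap the exponent $\rho\mapsto\alpha\rho$ until it crosses $1$. Your remark about the borderline $\rho_k=1$ is fine; the paper simply avoids it because the iterates $\rho\alpha^k$ generically skip over $1$.

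Part (ii), however, has a genuine gap. You correctly observe that the mild form starting from $0$,
\[
|v(t)|\le e^{-t}\int_0^t e^{(1-\rho\alpha)s}\,ds\cdot C',
\]
only yields $|v(t)|=O(e^{-t})$, which is \emph{worse} than the hypothesis $O(e^{-\rho t})$ when $\rho>1$. Your attempted fix --- splitting the integral at $t/2$ --- does not recover the loss: on $[0,t/2]$ your own bound is $O(e^{-t/2})$, which is even worse, and no iteration of this halving can push the exponent past $1$. The claim that ``iterating this halving argument drives the effective exponent to $+\infty$'' is not justified and is in fact false as stated.

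The missing idea, which the paper uses, is to integrate from an \emph{arbitrary} $t_0>0$ rather than from $0$:
\[
v(t)=v(t_0)e^{-(t-t_0)}+\int_{t_0}^t e^{-(t-s)}\bigl(2v(\alpha s)-v^2(\alpha s)\bigr)\,ds.
\]
Multiply by $e^t$ and let $t\to\infty$. Since $\rho>1$, the left side $v(t)e^t=O(e^{(1-\rho)t})\to 0$, while the integral term is bounded in absolute value by $C'\int_{t_0}^\infty e^{(1-\rho\alpha)s}\,ds=\frac{C'}{\rho\alpha-1}e^{(1-\rho\alpha)t_0}$. This forces
\[
|v(t_0)|e^{t_0}\le \frac{C'}{\rho\alpha-1}e^{(1-\rho\alpha)t_0},\qquad\text{i.e.}\qquad |v(t_0)|\le \frac{C'}{\rho\alpha-1}\,e^{-\rho\alpha t_0}.
\]
Now the exponent has genuinely improved from $\rho$ to $\rho\alpha$, and iterating drives it to $+\infty$ (the constants stay controlled since the denominator $\rho\alpha^k-1$ grows), whence $v\equiv 0$. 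The point you missed is that the obstruction --- the $e^{-t}$ coming from the kernel --- is neutralized precisely by the extra term $v(t_0)e^{-(t-t_0)}$ that appears when you do not start the integral at $0$; this term absorbs the $e^{-t}$ contribution and what remains can be bounded by the faster-decaying tail.
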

\begin{proof}
Part (i) is proven by a {\it bootstrap} method, starting from $\w(t)=O(e^{-\rho t})$ 
by hypothesis, with  $\rho\in(0,1)$. Then, $|2\w(t)-\w^2(t)|\le ce^{-\rho t}$ 
for large enough $c=c_\rho$. Using (\ref{alphinclexcl}), one has
$$ -\w(t) - c\,e^{-\alpha\rho t}\le \w^\prime(t)\le -\w(t) +c\,e^{-\alpha\rho t}.$$
Integrating on $[t_0,t]$ one has
\begin{eqnarray*}
|\w(t) - \w(t_0)e^{-(t-t_0)}| &\le& e^{-t}\int_{t_0}^te^sc\,e^{-\alpha\rho s}ds\\
&=&\frac{c}{1-\alpha\rho}e^{-t}\left(e^{(1-\rho\alpha)t}
-e^{(1-\rho\alpha)t_0}\right)
\end{eqnarray*}
so that as $t\rightarrow \infty$
$$
|\w(t)| = \begin{cases} O(e^{-t}) \ & \ \text{if} \ \alpha\rho >1 \\
                             O(e^{-\alpha\rho t}) \ &\  \text{if}\ \alpha\rho<1.
                             \end{cases}
$$
In the case $\alpha\rho >1$, the process stops and (i) is established,
while in the case $\alpha\rho <1$, the bootstrap process is repeated
with $\rho$ replaced by $\alpha\rho (>\rho)$.
Note that each time the bootstrap process is applied, another factor
of $\alpha$ appears in the exponent.  Thus, after $k$ steps with $\alpha^k\rho<1$,
$$\w(t) = O(e^{-\alpha^k\rho t}).$$
Now, for $k\ge \frac{\ln(\frac{1}{\rho})}{\ln(\alpha)}$ the process stops
and $\w(t) \le O(e^{-t})$ is achieved.

To prove part (ii), assume that $|\w(t)|= O(e^{-\rho t})$, with $\rho>1$.
By the same argument as above, for $c>0$ big enough and $t>t_0>0$,
\begin{equation}\label{lem3.7-est}
|\w(t) - \w(t_0)e^{-(t-t_0)}| \le \frac{c}{1-\alpha\rho}e^{-t}\left(e^{(1-\rho\alpha)t}
-e^{(1-\rho\alpha)t_0}\right)
\end{equation}
Note that if $\w(t_0)\ne 0$, then $|\w(t) - \w(t_0)e^{-(t-t_0)}|\ge O(e^{-t})$, while 
$e^{-t}\left(e^{(1-\rho\alpha)t}-e^{(1-\rho\alpha)t_0}\right)=O(e^{-\rho\alpha t})=o(e^{-t})$, 
contradicting \eqref{lem3.7-est}. This contradiction implies that $w(t_0)=0$ for all $t_0>0$, 
i.e. $\w(t)\equiv 0$.  
\end{proof}

\begin{proof}[Proof of \autoref{alphasharprmk}]
Let $\w(t)=1-u(t)$, then $\w$ satisfies \eqref{alphinclexcl}. Assume $\w(t)=o(t^{-\rho})$, 
Note that \eqref{alphinclexcl} implies that
\[
\w'(t)= -\w(t)+(2-\w(\alpha t))\w(\alpha t),
\]
and thus
\begin{equation}\label{thm3.5-1}
\w(t)=\w(t_0)\,e^{-(t-t_0)}+\int\limits_{t_0}^t (2-\w(\alpha s))\, e^{-(t-s)} \w(\alpha s)\, ds.
\end{equation}
As in the proof of Theorem 9(ii) in \cite{kato1971functional}, it then follows that 
$|\w(t)|=O(e^{-\epsilon t})$ for some $\epsilon>0$. 
For completeness, we will present this argument below.

For $\tau>0$ consider 
\[
m(\tau)=\sup\limits_{t\ge\tau}\,t^{\rho}|\w(t)|.
\]
Note that $m(\tau)$ is a bounded decreasing to zero function. Moreover, from \eqref{thm3.5-1}
\[
|\w(t)|\le t_0^{-\rho}e^{-(t-t_0)}m(t_0)+m(\alpha t_0) e^{-t} \int\limits_{t_0}^t (2+|\w(\alpha s)|)\, 
e^{s} (\alpha s)^{-\rho} ds.
\]
Let $b\in(0,1)$ be such that $\nu=b-\alpha^{1/2}>0$. 
Fix a $\tau_0>1$ big enough that $|w(t)|\le t^{-\rho}$ and $t^{\rho}e^{-t}\le e^{-bt}$ 
for all $t>\tau_0$. Then, for $t>t_0>\tau_0$ we have
\[
t^{\rho}|\w(t)|= t_0^{-\rho}e^{-bt+t_0}+ m(\alpha t_0) \, t^{\rho}e^{-t} 
\int\limits_{t_0}^t  e^{s} \,(2+(\alpha s)^{-\rho}) (\alpha s)^{-\rho} ds.
\]
Using the estimate $\int_1^t t^k e^s ds \le (1+ \frac{c_k}{t})\,e^t t^k$ (valid for any $k\in\mathbb{R}$ 
and big enough $c_k>0$), we conclude
\[
\begin{aligned}
t^{\rho}|\w(t)| &= t_0^{-\rho}e^{-bt+t_0}+ m(\alpha t_0)\left(2\alpha^{-\rho}\left(1+\frac{c_\rho}{t}\right)
+\alpha^{-2\rho}\left(1+\frac{c_{2\rho}}{t}\right)t^{-\rho}\right)\\
&\le  t_0^{-\rho}e^{-bt+t_0}+ m(\alpha t_0)\left(1+\frac{C}{t^{\delta}}\right) \,.
\end{aligned}
\]
where $\delta=\min\{1,\rho\}$, and $C>c_{\rho}$ large enough, independent of $t$. In the above, 
we used the fact that $2\alpha^{-\rho}=1$. Consider $\tau=\alpha^{1/2}t_0$ and take 
$\sup_{t\ge\tau}$ in the right-hand side of the inequality above. We obtain:
\begin{equation}\label{thm3.5-2}
m(\tau)\le t_0^{-\rho}e^{-b\tau +\alpha^{-1/2}\tau}+m(\alpha^{1/2} \tau)\left(1+\frac{C}{\tau^{\delta}}\right)
= e^{-\nu\tau}+\left(1+\frac{C}{\tau^{\delta}}\right)\,m(\alpha^{1/2} \tau)
\,.
\end{equation}

We can iterate \eqref{thm3.5-2}, by applying it to $m(\alpha^{1/2}\tau)$ in the right-hand-side 
(with $\tau$ replaced with $\alpha^{1/2}\tau$), obtaining:

\[
\begin{aligned}
m(\tau)&\le e^{-\epsilon\tau}+
\left(1+\frac{C}{\tau^{\delta}}\right)\,
\left( e^{-\nu\alpha^{1/2}\tau}+\left(1+\frac{C}{(\alpha^{1/2}\tau)^{\delta}}\right)\,m(\alpha^{2/2} \tau)\right)\\
&\le 
e^{-\nu\tau}+\left(1+\frac{C}{\tau^{\delta}}\right) e^{-\nu\alpha^{1/2}\tau}
+ \left(1+\frac{C}{\tau^{\delta}}\right) \left(1+\frac{C}{\alpha^{\delta/2}\tau^{\delta}}\right)\,m(\alpha^{2/2} \tau)\\
&\le
\left(1+\frac{C}{\tau^{\delta}}\right) \left(1+\frac{C}{\alpha^{\delta/2}\tau^{\delta}}\right)
\left(
e^{-\nu\tau}+e^{-\nu\alpha^{1/2}\tau}+m(\alpha^{2/2} \tau)
\right)\\
&\le 
e^{\left(1+\frac{1}{\alpha^{\delta/2}}\right)\frac{C}{\tau^{\delta}}}
\left(
e^{-\nu\tau}+e^{-\nu\alpha^{1/2}\tau}+m(\alpha^{2/2} \tau)
\right)
\,.
\end{aligned}
\]
Iterating this process $n$ times, we estimate:
\[
\begin{aligned}
m(\tau)
&\le 
e^{\sum_{j=0}^{n-1}\frac{1}{\alpha^{j\delta/2}}\frac{C}{\tau}} \left(
\sum\limits_{j=0}^{n-1}e^{-\nu \alpha^{j/2}\tau}+m(\alpha^{n/2}\tau)
\right)\\
&\le
e^{\frac{\alpha^{\delta/2}}{\alpha^{\delta/2}-1}\frac{C}{\tau}} \left(
\sum\limits_{j=0}^{n-1}e^{-\nu \alpha^{j/2}\tau}+m(\alpha^{n/2}\tau)
\right)\,.
\end{aligned}
\] 
Now take $n\to\infty$, using that $m(t)\to 0$ as $t\to\infty$ to obtain
\[
m(\tau)\le e^{\frac{\alpha^{\delta/2}}{\alpha^{\delta/2}-1}\frac{C}{\tau}}\sum\limits_{j=0}^{\infty}e^{-\nu \alpha^{j/2}\tau}\,,
\]
and consequently, $m(\tau)=O(e^{\nu \tau})$. Thus, $\w(t)=o(e^{\epsilon t})$ with $\epsilon=\nu/2$.
 Then, the conclusions of \autoref{alphasharprmk} follow from  \autoref{sharplem}.
\end{proof}


\subsection{Stochastic Picard iterations with constant ground state}\label{groundstatesec}

If the explosion time $S>t$, then $X_n(t)$ is an eventual constant sequence, equal to $X(t)$ satisfying \eqref{alphrecursion} 
for big enough $n$. In the explosive case, i.e. when $\alpha>1$, $\P(S<\infty)=1$, different choices of the ground state 
$X_0$, lead to super-martingales yielding in the limit to multiple solutions for the same initial states $u_0$, \cite{alphariccati}. 
Notably, for the initial condition $u_0=0$, the choice of a random initial iteration, 
\begin{equation}
\label{X_0-G}
X_0(t)=\left\{\begin{array}{ll}
0, & T_{\theta}\ge t,\\
G(t-T_{\theta}),\quad&  T_{\theta}< t,
\end{array}\right.
\end{equation}
where $G$ is a continuous function, leads to a {uniformly integrable} super-martingale $\{X_n\}$, provided 
$u^{(0)}=\EXP(X_0)$ satisfies $[u^{(0)}(\alpha t)]^2\le G(t)$. If one chooses $G(t)\in[0,1]$ (e.g. 
$G(t)\equiv0$ or $G\equiv 1$), one obtains a uniformly integrable super-martingale, yielding in the limit of 
expectations solutions for the $\alpha$-Riccati equation \eqref{diffalpha-ricc}. One remarkable choice of $G$ 
used in \cite{alphariccati} is
\[G_A(t)=e^{-t^{-\gamma}}(1+\gamma t^{-(\gamma+1)}), \quad \gamma=\frac{\ln{2}}{\ln{\alpha}},\]   
which yields a solution obtained earlier by Athreya \cite{athreya} using an extreme value method. 
Notably, this special choice of ground state $G_A$ is implicitly connected to the Frechet extreme value
distribution with parameter $\gamma$.

 The {\it minimal solution process} $\lwrX(t)$ extends $X(t)$  past explosion time by setting it equal to $0$. 
Alternatively, $\lwrX$ is the limit, as $n\to\infty$ of the iterative process $X_n$ defined by \eqref{X_n-eq} corresponding
 to the ground state $X_0\equiv 0$.  It is easy to verify that the minimal process satisfies \eqref{X_n-eq} for all $t>0$.   

In the nonexplosive case, we have the following existence and uniqueness results connected to stochastic Picard 
iterations, \cites{alphariccati,dascaliuc2019complex}.

\begin{prop}\label{nonexpl_alpha_prop}
Let $\alpha\in(0,1]$. Then, for any choice of ground state, $X_0$,
\[
X_n(t)\to\underline{X}(t) = u_0^{N_t}\qquad \text{for all}\ t\ge 0,
\]
where $N_t=|\partial V(t)|<\infty$ a.s. Moreover,
\begin{enumerate}
\item If $\alpha\in(0,1)$, then $\underline{u}(t)=\EXP(\underline{X}(t))<\infty$ for all $u_0,t>0$ and, as $t\to\infty$
\[
\underline{u}(t)\to
\begin{cases}
0, & \text{if}\ u_0\in[0,1)\\
\infty, & \text{if}\ u_0>1.
\end{cases}
\]
($\underline{u}(t)\equiv 1$ if $u_0=1$).
\item If $\alpha=1$, then $\underline{u}(t)=\EXP(\underline{X}(t))<\infty$ solves the logistic equation $u'=-u+u^2$ 
with corresponding asymptotic behavior in $t$.
\end{enumerate}
\end{prop}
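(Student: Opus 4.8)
The plan is to separate three issues: identifying the limit process $\underline X$ (which uses only non-explosion), establishing finiteness of $\EXP\underline X(t)$ (the one genuinely delicate point, and only for $\alpha<1$ with $u_0>1$), and reading off the asymptotics. First I would note that by \autoref{alphexplosion} we have $\P(S=\infty)=1$ when $\alpha\le1$; on $[S=\infty]$ no ray accumulates total time $\le t$, so for every $t$ the tree $V(t)$ is a finite binary tree, of finite depth $D(t)$. Unfolding \eqref{X_n-eq} from the root, after $n$ steps $X_n(t)$ is a product over the leaves of the depth-$n$ truncation of the recursion tree: a.s.\ a branch terminates (contributing the factor $u_0$) exactly when it reaches a $t$-leaf $v\in\partial V(t)$, and it can reach level $n$ without terminating only if $V(t)$ has an internal vertex at level $n$; hence for $n>D(t)$ the ground state is never consulted and
\[
X_n(t)=\prod_{v\in\partial V(t)}u_0=u_0^{|\partial V(t)|}=u_0^{N_t},
\]
so $X_n(t)\to\underline X(t):=u_0^{N_t}$ for every $t$, a.s.\ and independently of $X_0$, with $N_t=|\partial V(t)|<\infty$ a.s.\ Then \autoref{X_n-conv} identifies $\underline X$ as a solution process. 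Running the iteration from the constant ground state $X_0\equiv1$ (when $u_0\ge1$) or $X_0\equiv0$ (when $u_0\le1$) makes $\{X_n(t)\}$ monotone and, for each fixed $n$, bounded (by $u_0^{2^n}$, resp.\ by $u_0$), so $u^{(n)}=\EXP X_n$ satisfies the iteration \eqref{Pic-iteration} and increases to $\underline u$; passing to the limit by monotone convergence shows that, once $\underline u(t)<\infty$ is known, $\underline u$ satisfies \eqref{mildalpha}, equivalently \eqref{diffalpha-ricc}.

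The main work is the finiteness of $\underline u$ when $\alpha\in(0,1)$. For $u_0\le1$ it is immediate since $N_t\ge1$ forces $u_0^{N_t}\le u_0$. For $u_0>1$ I would bootstrap in the time variable, exploiting $\alpha^{-1}>1$. Set $M(T)=\sup_{n\ge0,\;0\le t\le T}u^{(n)}(t)$. Choosing $T_0>0$ with $c:=1-e^{-T_0}<\tfrac{1}{4u_0}$, the scalar recursion $a_0=1$, $a_n=u_0+ca_{n-1}^2$ is increasing and bounded (it converges to the smaller fixed point of $x\mapsto u_0+cx^2$), and an induction using $\alpha(t-s)\le t\le T_0$ in \eqref{Pic-iteration} gives $u^{(n)}(t)\le a_n$ for $t\le T_0$, so $M(T_0)<\infty$. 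If $M(T)<\infty$, then for $t\le T/\alpha$ one has $\alpha(t-s)\le\alpha t\le T$, and \eqref{Pic-iteration} gives $u^{(n)}(t)\le u_0+M(T)^2$, i.e.\ $M(T/\alpha)\le\max\{M(T),\,u_0+M(T)^2\}<\infty$. Iterating, and using $T_0\alpha^{-k}\to\infty$, we get $M(T)<\infty$ for all $T$, hence $\underline u(t)\le M(t)<\infty$ for every $t>0$, and by the first paragraph $\underline u$ solves \eqref{diffalpha-ricc}.

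For the asymptotics when $\alpha\in(0,1)$: on $[S=\infty]$ each replacement time is a.s.\ finite, so every vertex eventually becomes internal, $V(t)$ increases to the whole tree, and $N_t=|\partial V(t)|\to\infty$ a.s.\ as $t\to\infty$ (using \eqref{leafancestorcounts}). Dominated convergence then gives $\underline u(t)\to0$ for $u_0\in[0,1)$, Fatou gives $\underline u(t)\to\infty$ for $u_0>1$, and $\underline u\equiv1$ for $u_0=1$. When $\alpha=1$ the cascade is the classical Yule cascade, so $N_t$ is the Yule population, geometric on $\{1,2,\dots\}$ with $\P(N_t=k)=e^{-t}(1-e^{-t})^{k-1}$ \cite{harris1963}; summing the series gives
\[
\underline u(t)=\EXP\bigl(u_0^{N_t}\bigr)=\frac{u_0e^{-t}}{1-u_0(1-e^{-t})},
\]
which is finite for all $t$ if $u_0\le1$ and for $t<\ln\frac{u_0}{u_0-1}$ if $u_0>1$, and on its interval of definition it solves $u'=-u+u^2$ with $u(0)=u_0$ (by direct differentiation, or by the first paragraph), exhibiting the logistic asymptotics: decay to $0$ for $u_0<1$, the constant $1$ for $u_0=1$, and finite-time blow-up for $u_0>1$.

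The hard part is the finiteness in the second paragraph for $u_0>1$: a direct tail estimate on $N_t=|\partial V(t)|$ does not help, because $|\partial V(t)|$ can be as large as $2^{D(t)}$ while one only gets $\P(D(t)\ge m)\le(2t)^m\alpha^{m(m-1)/2}$, whose decay, of order $e^{-cm^2}$, is swamped by the double-exponential factor $u_0^{2^m}$. The self-similar bootstrap above avoids tail estimates altogether by extending the interval of finiteness geometrically, and it is precisely $\alpha^{-1}>1$ that supplies the room to reach every horizon $T$; the same scheme breaks down at $\alpha=1$, consistent with the finite-time blow-up there when $u_0>1$.
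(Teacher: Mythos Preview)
The paper does not actually prove this proposition; it only states it, attributing the results to the references cited in the sentence preceding it. Your argument is correct and self-contained. The identification $X_n(t)\to u_0^{N_t}$ for an arbitrary ground state follows cleanly from non-explosion (finite recursion depth $D(t)$, so for $n>D(t)$ the ground state is never consulted), and your self-similar bootstrap for finiteness of $\underline u$ when $\alpha<1$ and $u_0>1$---extending local boundedness of the Picard iterates from $[0,T]$ to $[0,T/\alpha]$ via \eqref{Pic-iteration} and iterating---is exactly the right device here. Your closing diagnosis is also accurate: a crude tail bound on the depth $D(t)$ yields only Gaussian-type decay $e^{-cm^2}$, which cannot control $u_0^{2^m}$, so some form of the bootstrap is genuinely required rather than merely convenient.
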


In the explosive case, the following nonuniqueness results involving the use of constant ground states goes 
back to \cite{alphariccati}.

\begin{prop}\label{alphgrndstateprop}
Let $\alpha>1$. Consider the stochastic Picard ground state iterations $X_n(t)$ \eqref{X_n-eq} with constant 
ground states $X_0(t)\equiv \delta >0$. Denote $N_t=|\partial V(t)|$ as before.  Then, 
\begin{enumerate}
\item If $u_0\in[0,1]$ then 
\begin{enumerate}
\item[(i)] For $\delta\in(0,1)$, 
$$X_n\to\underline{X}=\begin{cases}
0 \ &\ \text{if}\ S<t\\
u_0^{N_t}, \ &\ \text{if}\ S\ge t.\end{cases}$$
In particular, the minimal solution $u(t) = \underline{u}(t)={\EXP}(\underline{X})$ is 
well-defined and $\underline{u}(t)\to0$ as $t\to\infty$.
\item[(ii)] For $\delta=1$, $$X_n\to\overline{X}=\begin{cases}
1 \ &\ \text{if}\ L<t\\
u_0^{N_t}, \ &\ \text{if}\ L\ge t.\end{cases}$$
In particular, $u(t) = \overline{u}(t)={\EXP}(\overline{X})$ is well-defined and $\overline{u}(t)\to1$ as $t\to\infty$ 
\item[(iii)] For $\delta >1$, the limit 
$$X_\infty(t) = \lim_{n\to\infty}X_n(t) = \begin{cases}
\infty \ &\ \text{if}\ S<t\\
u_0^{N_t}, \ &\ \text{if}\ S\ge t.\end{cases}$$
In particular,  ${\EXP}(X_\infty(t))=\infty$ for all $t>0$ 
\end{enumerate}
\item Suppose that $u_0>1$.
\begin{enumerate}
\item[(i)] For $\delta\in [0,1)$,
\[X_n\to\overline{X}=\begin{cases}
0 ,\ &\ \text{if}\ S<t\\
u_0^{N_t}, \ &\ \text{if}\ S\ge t.\end{cases}\]
In particular, for all $t>0$
$$\underline{u}(t) ={\EXP}(\underline{X}(t))
\ \begin{cases} <\infty, \ &\ \text{if}\ u_0<(2\alpha-1)/4,\\
                         =\infty, \ & \text{in finite time if}\  u_0>2\alpha-1,\\
                         \text{unknown} & \text{in other cases.}
                        \end{cases}
      $$
 If there is a locally integrable function $g$ such that $X_n\le g$ for all $n$, then
  $\underline{u}(t)={\EXP}\underline{X}(t)<\infty$.
\item[(ii)] For $\delta =1$, 
$$X_n(t)\to\overline{X}(t) =\begin{cases}
\infty & \text{on} \ [N_t=\infty]\\
u_0^{N_t} & \text{on} \ [N_t<\infty].
\end{cases}$$
In particular, if $\alpha\in(1,2)$, then $\EXP(\overline{X}(t)) = \infty$, and if $\alpha>2$ then 
$\overline{X}(t)\in\R$ is well-defined for all $t>0$, while
\[
\overline{u}(t) ={\EXP}(\overline{X}(t))
\ \begin{cases} <\infty, \ &\ \text{if}\ u_0<(2\alpha-1)/4-(6\alpha^2-15\alpha+4)/4(\alpha-1)(2\alpha-1),\\
                         =\infty, \ & \text{in finite time if}\  u_0>2\alpha-1\ \text{or}\ \alpha\in(0,2),\\
                         \text{unknown} & \text{in other cases}.
                        \end{cases}
\]
\item[(iii)] For $\delta>1$, 
$$X_n\to{X}_{\infty}=\begin{cases}
\infty \ &\ \text{if}\ S<t\\
u_0^{N_t} \ &\ \text{if}\  S\ge t.\end{cases}$$
$u(t) ={\EXP}({X}_\infty(t)) = \infty$ for all $t>0$.
\end{enumerate} 
\end{enumerate}
\end{prop}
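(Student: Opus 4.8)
The plan is to run one common three-step argument across all the cases: identify $X_n(t)$ explicitly as a product over a frontier of the ASA cascade, pass to the almost-sure limit using the structural results of \autoref{Sec5}, and then read off the expectations from the tail bounds of \autoref{expl_asymptotics} together with the sharp integrability thresholds recorded in \cites{alphariccati,dascaliuc2023errata}.

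\emph{Step 1: frontier representation.} First I would iterate \eqref{X_n-eq} with the constant ground state $X_0\equiv\delta$ and show by induction, exactly as in \autoref{martingale-a}(i) and \autoref{martingalep}(i), that a.s.\ $X_n(t)=u_0^{M_n(t)}\,\delta^{K_n(t)}$ for every $n\ge1$, where $M_n(t)$ is the number of $t$-leaves (vertices of $\partial V(t)$) found by level $n-1$, each contributing the initial value $u_0$, and $K_n(t)$ is the number of branches still alive at time $t$ at level $n$, each contributing the ground-state value $\delta$ (whether a vertex has branched by $t$ being decided by the replacement time $\Theta_v$ of \eqref{agedef}); a vertex strictly below a $t$-leaf never enters the product. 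Thus $M_n(t)\uparrow N_t:=|\partial V(t)|\le\infty$, and on $[S\ge t]$ the subtree $V(t)$ is finite so $K_n(t)=0$ for $n$ large.

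\emph{Step 2: the almost-sure limit.} I would then split along $[S\ge t]$, $[S<t]\cap[N_t<\infty]$ and $[S<t]\cap[N_t=\infty]$. On $[S\ge t]$ the sequence is eventually constant at $u_0^{N_t}$. On $[S<t]\cap[N_t<\infty]$ I would invoke \autoref{hypersubprop} to produce at least one $t$-hyperexplosive subtree and \autoref{finitemaximal} to see there are finitely many maximal ones $\T_{w_1},\dots,\T_{w_m}$ with $m\ge1$; since \autoref{finitemaximal}(ii) leaves only finitely many alive vertices outside $\bigcup_j\T_{w_j}$ while inside each $\T_{w_j}$ every descendant is alive at time $t$ (the hyperexplosion inequality together with the continuity of the law of $L$ from \autoref{contdist} forces $\Theta_v<t$ a.s.), one gets $X_n(t)=u_0^{N_t}\,\delta^{c_n(t)}$ for $n$ large, where $c_n(t)\to\infty$ counts the level-$n$ vertices of $\bigcup_j\T_{w_j}$; the limit is therefore $0$, $u_0^{N_t}$, or $\infty$ according as $\delta<1$, $\delta=1$, or $\delta>1$ (for $u_0>0$; the value $u_0=0$ is the degenerate case where the stated ``$S<t$'' reduces to ``$L<t$'', matching the separate treatment of \autoref{lastcase}). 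On $[S<t]\cap[N_t=\infty]$---possible only for $1<\alpha<2$ by \autoref{alphexplosion}, and then governed by \autoref{infleaves}---one has $u_0^{M_n(t)}\to u_0^{N_t}$ (read as $0$ if $u_0<1$, $1$ if $u_0=1$) while $\delta^{K_n(t)}$ again tends to $0$, $1$, or $\infty$. For $\delta\le1$ the supermartingale argument of \autoref{martingale-a}(ii) gives an independent proof of convergence and of the supersolution property. Collating these over the relevant $(\delta,u_0)$ ranges yields the claimed limits $\lwrX$, $\uprX$, $X_\infty$.

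\emph{Step 3: expectations, and the main obstacle.} Finally I would take expectations in the representation of Step 1: the contributions of $[S\ge t]$ and $[L\ge t]$ are negligible because $\P(S\ge t),\P(L\ge t)\lesssim e^{-t}$ by \autoref{expl_asymptotics}, and $\EXP(X_\infty(t))=\infty$ is immediate whenever a $\delta>1$ factor or the event $[N_t=\infty]$ has positive probability. The genuinely hard step---and the one I expect to be the main obstacle---is the sharp integrability of $\EXP\!\bigl(u_0^{N_t}\bigr)$ on the explosive set: this function formally solves the integral equation \eqref{mildalpha}, and a renewal / generating-function analysis of that recursion (the content of \cite{alphariccati}*{Thm 4.2, Rmk 3.3}, with the correction in \cite{dascaliuc2023errata}) is what pins down the thresholds $u_0<(2\alpha-1)/4$ for the minimal branch and $u_0<(2\alpha-1)/4-(6\alpha^2-15\alpha+4)/\bigl(4(\alpha-1)(2\alpha-1)\bigr)$---that is, $u_0\in R_\alpha\cap(1,\infty)$---for the maximal branch when $\alpha>2$, as well as finite-time blow-up when $u_0>2\alpha-1$ \cite{alphariccati}; I would quote these rather than reprove them. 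Assembling Steps 1--3 case by case then gives the proposition.
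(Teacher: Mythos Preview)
Your Steps 1 and 3 line up with the paper: the explicit frontier representation and the decision to quote the integrability thresholds from \cite{alphariccati} and \cite{dascaliuc2023errata} are exactly how the paper proceeds.

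The gap is in Step 2, in the ``competing'' cases where $u_0$ and $\delta$ sit on opposite sides of $1$---that is, case 1(iii) with $u_0\in(0,1)$ and case 2(i) with $u_0>1$. On the event $[S<t]\cap[N_t=\infty]$, which has positive probability for every $t>0$ when $1<\alpha<2$ (\autoref{alphexplosion}), your product $u_0^{M_n(t)}\delta^{K_n(t)}$ is of the form $\infty\cdot 0$ or $0\cdot\infty$: the leaf count and the alive-branch count both diverge, and nothing in \autoref{finitemaximal} or \autoref{infleaves} tells you which factor wins. Your parenthetical ``read as $0$ if $u_0<1$, $1$ if $u_0=1$'' quietly drops the case $u_0>1$, and for 1(iii) you have not argued that $K_n$ grows faster than $M_n$. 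The supermartingale remark does not help here either: for $u_0>1$ and $\delta<1$ the sequence is not a supermartingale.

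The paper closes this gap with \autoref{hypersubtreelem}, which takes a different route from your \autoref{finitemaximal} argument. It identifies $\partial\T$ with $[0,1]$ via dyadic expansion, so that maximal $t$-hyperexplosive subtrees and $t$-leaves correspond to dyadic intervals with disjoint interiors. The hyperexplosive set $\mathcal{H}$ has positive Lebesgue measure on $[S<t]$ (by \autoref{hypersubprop}), and one obtains the quantitative bounds $K_n\ge(|\mathcal{H}|-\epsilon_k)2^n$ while $M_n\le \ell_k+\epsilon_k 2^n$ for any $\epsilon_k>0$. Writing $u_0=\delta^{-D}$ and choosing $\epsilon_k$ small enough forces $\delta^{K_n}$ to dominate $u_0^{M_n}$ on all of $[S<t]$, resolving the indeterminacy. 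Your qualitative use of \autoref{finitemaximal} works nicely on $[N_t<\infty]$ but cannot supply this rate comparison; you will need the measure-theoretic step (or an equivalent) to finish 1(iii) and 2(i) when $1<\alpha<2$.
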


\begin{proof} In all cases, the particular form of limits of $X_n$ follow from the explicit representation 
\eqref{expl-iter} and \autoref{hypersubtreelem} below. The statements about the expectations are proven 
in \cite{alphariccati}, with additional input from \cite{dascaliuc2023errata} in the case $\alpha\in(1,2)$. 
Namely, the cases in part (1) follow from  \cite{alphariccati}*{Section 4, Proposition 4.1 and Theorem 4.1}.
The case 2(i) is proven in \cite{alphariccati}*{Theorems 3.3 and  5.1}. The part (2)(ii) is proven in 
\cite{alphariccati}*{Theorems 4.2 and 5.1} (noting that $\overline{u}(t)\ge\underline{u}(t)$); 
moreover, \cite{dascaliuc2023errata}*{Proposition 2}  is used to conclude infiniteness of the expectation in the case 
$\alpha\in(1,2)$. Finally, the case (2)(iii) follows from the fact that $\P(S<t)>0$.
\end{proof}

In preparation for \autoref{hypersubtreelem}, it is convenient to introduce an alternative representation 
of $\partial\T$. Identify $\partial\T=\{1,2\}^\infty$ with points in the unit interval under dyadic expansion. 
In particular, the ray $s=(s_1,s_2,\dots)\in\partial\T$ defines $x_s=\sum_{j=1}^\infty(s_j-1)2^{-j}\in[0,1]$. 
Then, for $v\in\T$, the set of rays passing through $v$ define a subinterval 
$J_v=[\sum_{j=1}^{|v|}(v_j-1)2^{-j},\sum_{j=1}^{|v|}(v_j-1)2^{-j} + 2^{-|v|}]$.
The countable set of rationals in $[0,1]$ admit two dyadic representations as rays. 

Note that a maximal $t$-hyperexplosive subtree $\T_v$ (see \autoref{mhs}) corresponds to rays belonging to the interval 
$$J_v=[x_v,x_v+2^{-|v|}], \quad x_v=\sum_{j=1}^{|v|}(v_j-1)2^{-j}.$$

\begin{lem}\label{hypersubtreelem}
Let $X_0=\delta$ and let $$M_n(t) =|\{v\in\stackrel{o}{V}(t): |v|=n\}|,$$ 
and
$$N_n(t) = |\{v\in \partial V(t): |v|\le n\}|.$$
Then, the Picard ground state iteration at generation $n$ is given by
\begin{equation}\label{expl-iter}
X_n(t) = u_0^{N_n(t)}\delta^{M_n(t)}.
\end{equation}
On the event $[S<t]$ one has:
\begin{enumerate}
\item[(i)] $u_0>1, \delta\in[0,1)$ implies
$X_n(t)\to0$ as $n\to\infty$.
\item[(ii)] 
$u_0\in(0,1), \delta>1$ implies
$X_n(t)\to\infty$ as $n\to\infty$.
\end{enumerate}
\end{lem}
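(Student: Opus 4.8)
The plan is to prove the lemma in two stages: (a) derive the closed product formula \eqref{expl-iter} by induction on $n$; (b) on the event $[S<t]$, control the growth of $M_n(t)$ and $N_n(t)$ and pass to the limit.

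For (a) I would unwind the stochastic Picard recursion \eqref{X_n-eq} through $n$ generations. A vertex $v$ with $|v|\le n$ is \emph{reached} by this unwinding exactly when all of its strict ancestors are internal, i.e.\ $\Theta_{v|j}<t$ for $0\le j<|v|$; a reached vertex $v$ with $|v|<n$ contributes the factor $u_0$ precisely when $T_v\ge\alpha^{|v|}(t-\Theta_{\overleftarrow v})$, which — since $\Theta_v=\Theta_{\overleftarrow v}+\alpha^{-|v|}T_v$ — is the event $\{v\in\partial V(t)\}$, while a reached vertex at depth exactly $n$ contributes the ground-state value $\delta$. Because $j\mapsto\Theta_{v|j}$ is nondecreasing, $v\in\partial V(t)$ already forces every strict ancestor of $v$ to be internal, so each $t$-leaf of height $\le n$ is reached; multiplying the factors, and applying the same description to the two subtrees rooted at the children of $\theta$, gives $X_n(t)=u_0^{N_n(t)}\delta^{M_n(t)}$. (The one bookkeeping subtlety is the fate of $t$-leaves of height exactly $n$, of which there are $b_n:=N_n(t)-N_{n-1}(t)$; since $b_n=o(2^n)$ by step (b), moving them between the two exponents is immaterial for the limits below.)

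For (b), restrict to $[S<t]$. There is a ray $s$ with $\Theta_s<t$, so every $s|j$ is internal; hence $M_n(t)\ge1$ for all $n$, and $M_0(t)=1$, $N_0(t)=0$. Each internal vertex at level $k$ has two children at level $k+1$, each of which — its parent being internal — is either internal or a $t$-leaf, so $2M_k=M_{k+1}+b_{k+1}$; solving this recursion with $M_0=1$ gives $M_n=2^n-\sum_{k=1}^n 2^{n-k}b_k$. From $M_n\ge1$ we get $\sum_{k\ge1}2^{-k}b_k\le1$, hence $2^{-k}b_k\to0$, and therefore $N_n=\sum_{k=1}^n b_k=\sum_{k=1}^n 2^k(2^{-k}b_k)=o(2^n)$. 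For a matching lower bound on $M_n$ I would invoke \autoref{hypersubprop}: a.s.\ on $[S<t]$ there is a $t$-hyperexplosive subtree $\T_v$, i.e.\ $L_v\le t-\Theta_{\overleftarrow v}$; then every $w\in\T_v$ satisfies $\Theta_w\le\Theta_{\overleftarrow v}+L_v\le t$, and since each $\Theta_w$ — a positive combination of finitely many i.i.d.\ exponentials — has a density, in fact $\Theta_w<t$ a.s.\ simultaneously for all such $w$, so all of $\T_v$ is internal at time $t$ and $M_n(t)\ge 2^{\,n-|v|}$. Consequently $2^{\,n-|v|}\le M_n(t)\le2^n$, so $M_n(t)\to\infty$ and $N_n(t)=o(M_n(t))$.

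It remains to conclude. Writing $\log X_n(t)=N_n(t)\log u_0+M_n(t)\log\delta=M_n(t)\big(\tfrac{N_n(t)}{M_n(t)}\log u_0+\log\delta\big)$: in case (i), if $\delta=0$ then $X_n(t)=0$ for every $n$ (as $M_n\ge1$), and if $\delta\in(0,1)$ the bracket tends to $\log\delta<0$ while $M_n\to\infty$, so $\log X_n(t)\to-\infty$, i.e.\ $X_n(t)\to0$; in case (ii), $\delta>1$ and $u_0\in(0,1)$, the bracket tends to $\log\delta>0$, so $\log X_n(t)\to+\infty$, i.e.\ $X_n(t)\to\infty$. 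The main difficulty is the lower bound $M_n(t)\ge 2^{\,n-|v|}$: a priori a ``thin'' explosion could keep $M_n(t)$ bounded while $N_n(t)\to\infty$, leaving the sign of $\log X_n(t)$ in doubt, and it is exactly the existence of a $t$-hyperexplosive subtree on $[S<t]$ (\autoref{hypersubprop}), all of whose vertices are internal by continuity of the relevant distributions, that rules this out. The remaining care is the $u_0$-versus-$\delta$ accounting in \eqref{expl-iter}, handled by $b_n=o(2^n)$.
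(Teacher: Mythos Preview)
Your proof is correct. The skeleton matches the paper's: derive \eqref{expl-iter} by induction, then on $[S<t]$ obtain a lower bound $M_n\gtrsim 2^n$ from a $t$-hyperexplosive subtree (\autoref{hypersubprop}) and combine with control of $N_n$. The difference lies in how $N_n$ is handled. The paper identifies $\partial\T$ with $[0,1]$ via dyadic expansion, associates the maximal hyperexplosive subtrees and the $t$-leaves to disjoint families of dyadic intervals forming sets $\mathcal{H},\mathcal{L}\subset[0,1]$, and reads off $M_n\ge(|\mathcal{H}|-\epsilon_k)2^n$ and $N_n\le \ell_k+\epsilon_k 2^n$ from Lebesgue-measure approximations. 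You instead exploit the parent--child counting relation $2M_k=M_{k+1}+b_{k+1}$, deduce $\sum_{k\ge1} 2^{-k}b_k\le 1$ from the trivial bound $M_n\ge 1$ (one explosive ray suffices), and conclude $N_n=o(2^n)$ by a Ces\`aro-type estimate. Your route is more elementary and avoids the measure-theoretic encoding entirely; the paper's route yields a sharper lower density $|\mathcal{H}|$ for $M_n/2^n$ (since $\mathcal{H}$ may collect several hyperexplosive subtrees), but this extra precision is not needed for the stated limits.

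Your observation that the $t$-leaves at height exactly $n$ receive the ground-state value $\delta$ rather than $u_0$ in the $n$-th iterate---so that the exact $n$-th iterate is $u_0^{N_{n-1}}\delta^{2M_{n-1}}$ and \eqref{expl-iter} is off by a factor $(\delta/u_0)^{b_n}$---is a valid point that the paper's one-line ``by induction'' glosses over, and your remark that $b_n=o(2^n)$ (while $M_n\ge 2^{\,n-|v|}$) renders this discrepancy immaterial for (i) and (ii) is exactly right.
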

\begin{proof}
The explicit representation \eqref{expl-iter} directly from \eqref{X_n-eq} by induction. 

Now assume the event $[S<t]$. Note that in this case, $M_n(t)>0$ for all $t>0$, so by if $\delta=0$, 
\eqref{expl-iter}, $X_n(t)=0\to 0$ as $n\to\infty$. Thus, it remains to consider the case $\delta>0$.

By \autoref{hypersubprop}, $V(t)$ contains maximal hypexplosive subtrees. In accordance with the 
\autoref{mhs}, at generation $n>|v|$, a maximal hyperexplosive subtree contributes 
$2^{n-|v|}=|J_v|2^n$ vertices to the count $M_n$, where  $|J|$ denotes Lebesgue measure of 
$J\subset [0,1]$. Notice that non-intersecting hyperexplosive subtrees correspond to
the dyadic rationals with non-intersecting interiors.  Let
$$\mathcal{H} = \{s\in\partial\T: \text{there is a maximal hyperexplosive subtree
rooted at}\ s|m\ \text{for some}\ m\ge 0.\}$$
Then $|\mathcal{H}|=\sum_{v\in\T}|J_v|$ and $|\mathcal{H}|>0$ since by 
Proposition \ref{hypersubprop} every exploding tree has a hyperexplosive subtree. Let $\epsilon_k=2^{-k}$, and let 
$v^1,\dots, v^{m_k}\in\T$ be a finite set of root vertices of
hyperexplosive subtrees, arranged in increasing order of $|v^i|$, such that
$$|\mathcal{H}\backslash\cup_{i=1}^{m_k}J_i| < \epsilon_k, \quad \cup_{i=1}^{m_k}J_{v^i}\subset \mathcal{H}.$$
If $n>m_k$,  these hyperexplosive subtrees contribute $|\cup_{i=1}^nJ_{v^i}|2^n$ vertices to the count $M_n$. Thus,
$$M_n\ge (|\mathcal{H}|-\epsilon_k)2^n.$$

Next consider the $t$-leaf count.  Each $t$-leaf $v\in \partial V(t)$ corresponds to the dyadic interval
$J_v = [x_v, x_v+2^{|v|}]$, so that distinct $t$-leaves correspond
to intervals having non-overlapping interiors.  In this case each $J_v$ contributes {\it only once} to the count 
$N_n$, provided $n\ge |v|$. Let,
$$\mathcal{L} = \cup_{v\in \partial V(t)}J_v.$$
That is, $\mathcal{L}$ is the subset of $[0,1]$ corresponding to the $t$-leaves.  If the tree has at least one 
$t$-leaf, and if $S<t$, then $|\mathcal{L}|=\sum_{v\in \partial V}|J_v|>0$.
$\mathcal{L}$ and $\mathcal{H}$ have disjoint interiors as well. For $\epsilon_k=2^{-k}$, there exist 
$v^1,\dots, v^{\ell_k}$, arranged in increasing order of $|v^i|$, such that 
$$|\mathcal{L}|-\sum_{i=1}^{\ell_k}|J_{v_i}|<\epsilon_k,\qquad
\cup_{i=1}^{\ell_k}J_{v_i}\subset\mathcal{L}.$$
If $n>|v_{\ell_k}|$ then the tree has at least $\ell_k$ $t$-leaves, and at most $\ell_k+\epsilon_k2^n$ intervals 
of size $2^{-n}$ on a set of measure at most $\epsilon_k$. Thus,
$$\ell_k\le N_n 
\le \ell_k+\epsilon_k2^n.$$

Collecting these counts, one has for all $k\ge 1$, there are $m_k,\ell_k\ge 1$ such that for all 
$n\ge\max\{\ell_k,m_k\}$
 $$N_n\le \ell_k+\epsilon_k2^n,\qquad M_n\ge (|\mathcal{H}|-\epsilon_k)2^n.$$
 Write $u_0=\delta^{-D}, D>0$. In the case (i) one has $u_0=\delta^{-D}>1$ and $0< \delta <1$ 
 so that in the $n$-th iteration
 \begin{eqnarray}
X_n(t) &\le& \delta^{-D(\ell_k+\epsilon_k2^n)}\delta^{(|\mathcal{H}|-\epsilon_k)2^n}\nonumber\\
 &=&\delta^{(|\mathcal{H}|-(1+D)\epsilon_k)2^n-\ell_k},
 \end{eqnarray}
 In the case (ii), $u_0=\delta^{-D}\in[0,1)$ and $\delta>1$, so that
  \begin{eqnarray}
X_n(t) &\ge& \delta^{-D(\ell_k+\epsilon_k2^n)}\delta^{(|\mathcal{H}|-\epsilon_k)2^n}\nonumber\\
 &=&\delta^{(|\mathcal{H}|-(1+D)\epsilon_k)2^n-\ell_k},
 \end{eqnarray}
 provided $n\ge \ell_k\wedge m_k$. Note that given the explosive tree, $|\mathcal{H}|, D,\delta$ are fixed
 positive quantities. So choosing $k$ such that $(|\mathcal{H}|-(1+D))\epsilon_k>0$, the assertions in the 
 lemma follow in the indicated limits.
\end{proof}


\section*{Data Availability Statement}

This work does not analyze data sets. The algorithms for numerical simulations are described in in \autoref{simulation} 
and implemented using Mathematica software. The Mathematica code is available upon request.

\section*{Conflict of Interest Statement}

The authors have no conflict of interest to declare that are relevant to the content of this article.



\bibliography{NSF2022-3}
\end{document}